\chardef\atsign='100
\def\beq#1\eeq{\begin{equation}#1\end{equation}}
\def\bal#1\eal{\begin{aligned}#1\end{aligned}}
\def\blal#1\elal{\begin{flalign}#1\end{flalign}}
\def\vh{{H_h}}
\def\HO{{H^1_0}}
\def\HH{{\mathbb{H}}}
\def\Forall{{\quad\hbox{for all }}}
\def\cC{{\mathcal{C}}}
\def\cH{{\mathcal{H}}}
\def\dH{{\dot{H}}}
\def\NUMDOF{{M_h}}
\def\pih{{\pi_h}}
\def\spow{{\beta}}
\def\tpow{{\gamma}}
\def\ljh{{\lambda_{j,h}}}
\def\For{{\hbox{for }}}
\def\CC{{\mathbb{C}}}
\def\egm{{e_{\tpow,\mu}}}
\def\eg{{e_{\tpow,1}}}
\def\RR{{\mathbb{R}}}
\def\le{\leq}
\def\alal{{\alpha^*}}
\def\vh{\mathbb V_h}
\def\tt{{\mathsf T}}
\def\lan{\langle}
\def\ran{\rangle}
\def\cM{{\mathcal M}}
\def\cN{{\mathcal N}}
\def\dt{{\tau}}
\def\hc{{{\cC'}}}
\def\cE{{\mathcal E}}
\def\bE{{\overline W}}
\def\bg{{\overline g}}
\renewcommand{\Re}{\mathfrak{Re}}
\renewcommand{\Im}{\mathfrak{Im}}
\theoremstyle{plain}% default
\newtheorem{theorem}{Theorem}[section]
\newtheorem{lemma}[theorem]{Lemma}
\newtheorem{proposition}[theorem]{Proposition}
\newtheorem{corollary}[theorem]{Corollary}
\theoremstyle{theorem}
\newtheorem{definition}{Definition}[section]
\newtheorem{remark}{Remark}[section]
\newtheorem{assumption}{Assumption}[section]
\begin{document}

\title[APPROXIMATION OF SPACE-TIME FRACTIONAL PARABOLIC EQUATIONS]
{NUMERICAL APPROXIMATION OF SPACE-TIME FRACTIONAL PARABOLIC EQUATIONS}
\author{Andrea Bonito}
\address{Department of Mathematics, Texas A\&M University, College Station,
TX~77843-3368.}
\email{bonito\atsign math.tamu.edu}

\author{Wenyu Lei}
\address{Department of Mathematics, Texas A\&M University, College Station,
TX~77843-3368.}
\email{wenyu\atsign math.tamu.edu}

\author{Joseph E. Pasciak}
\address{Department of Mathematics, Texas A\&M University, College Station,
TX~77843-3368.}
\email{pasciak\atsign math.tamu.edu}

\date{\today}

\begin{abstract} 
In this paper, we develop a numerical scheme for the space-time fractional parabolic equation, i.e., an equation involving a fractional time derivative and a fractional spatial operator. Both the initial value problem and the non-homogeneous forcing problem (with zero initial data) are considered. The solution operator $E(t)$ for the initial value problem can be written as a Dunford-Taylor integral involving the Mittag-Leffler function $e_{\alpha,1}$ and the resolvent of the underlying (non-fractional) spatial operator over
an appropriate integration path in the complex plane. Here $\alpha$ denotes the order of the fractional time derivative. The solution for the non-homogeneous problem can be written as a convolution involving an operator $W(t)$ and the forcing function $F(t)$.   

We develop and analyze semi-discrete methods based on finite element approximation to the underlying (non-fractional) spatial operator in terms of analogous Dunford-Taylor integrals applied to the discrete operator. The space error is of optimal order up to a logarithm of $1/h$. The fully discrete method for the initial value problem is developed from the semi-discrete approximation by applying an exponentially convergent sinc quadrature technique to approximate the Dunford-Taylor integral of the discrete operator and is free of any time stepping.

To approximate the convolution appearing in the semi-discrete approximation to the non-homogeneous problem, we apply a pseudo midpoint quadrature. This involves the average of $W_h(s)$, (the semi-discrete approximation to $W(s)$) over the quadrature interval. This average can also be written as a Dunford-Taylor integral.  We first analyze the error between this quadrature and the semi-discrete approximation. To develop a fully discrete method, we then introduce sinc quadrature approximations to the
Dunford-Taylor integrals for computing the averages.
\end{abstract} 

\subjclass{65M12, 65M15, 65M60,  35S11,  65R20}

\maketitle

%%%%%%%%%%%%%%%%%%%%%%%%%%%%%%%%%%%%%%%%%%%%%%%%%%
\section{Introduction}
In this paper, we investigate  the numerical approximation
to the following time dependent problem:  
given a bounded Lipschitz polygonal domain $\Omega$, a final time $\tt>0$,
an initial value  $v\in L^2(\Omega)$ (a complex valued Sobolev space)
and a forcing  function $f\in L^\infty(0,\tt;L^2(\Omega))$,
we seek $u:[0,\tt]\times \Omega\rightarrow\RR$ satisfying
\beq\label{e:p}
\left\{ \bal
\partial_t^\tpow u+L^\spow u &= f,\qquad \text{in } (0,\tt]\times\Omega,\\
u&=0,\qquad \text{on } (0,\tt]\times \partial\Omega,\\
u&=v,\qquad \text{on } \{0\}\times\Omega. \\
\eal\right .
\eeq
Here the fractional derivative in time $\partial^\tpow_t$ with $\tpow\in (0,1)$ is defined by the left-sided
Caputo fractional derivative of order $\tpow$,
\beq\label{e:cfd}
\partial^\tpow_t u(t):=\frac{1}{\Gamma(1-\tpow)}\int_0^t \frac{1}{(t-r)^\tpow}\frac{\partial u(r)}{\partial r}\, dr.
\eeq
Note that \eqref{e:cfd} holds for smooth $u$ and extends by continuity
to a bounded operator on $H^\tpow(0,\tt)\cap C[0,\tt]$ satisfying
$$\partial^\tpow_t u={}^R\partial^\tpow_t (u-u(0))$$
where ${}^R\partial^\tpow_t$ denotes the Riemann-Liouville fractional derivative.
The differential operator $L$ appearing in \eqref{e:p} is an unbounded operator associated with a Hermitian, coercive and 
sesquilinear form $d(\cdot,\cdot)$ on $H^1_0(\Omega)\times H^1_0(\Omega)$. For $\spow\in(0,1)$,  the fractional differential
operator $L^\spow$ is defined by the following eigenfunction expansion
\beq
L^\spow v :=\sum_{j=1}^\infty  \lambda_j^ \spow (v,\psi_j)\psi_j,
\label{ldef}
\eeq
where $(\cdot,\cdot)$ denotes the $L^2(\Omega)$ inner product and
$\{\psi_j\}$ is an $L^2(\Omega)$-orthonormal 
basis of eigenfunctions of $L$ with eigenvalues $\{\lambda_j\}$.   
The above definition is valid for $v\in D(L^\spow)$, where $D(L^\spow)$ denotes the
functions $v\in L^2(\Omega)$ such that $L^\spow v\in L^2(\Omega)$.
A weak formulation of \eqref{e:p} reads: find $u\in
L^2(0,\tt;D(L^{\spow/2}))\cap C([0,\tt];L^2(\Omega))$ and $\partial_t^\tpow u \in L^2(0,\tt;D(L^{-\spow/2}))$ satisfying
\beq\label{e:wp}
\left\{ \bal
	\langle\partial_t^\tpow u, \phi\rangle+A(u,\phi)&=(f,\phi),\quad \text{for all }\phi\in D(L^{\spow/2})\text{ and for a.e. }t\in(0,\tt],\\
	u(0)&=v .
\eal\right .
\eeq
Here the bilinear form $A(u,\phi):=(L^{\spow/2}u,L^{\spow/2}\phi)$ and $\langle\cdot,\cdot\rangle$ denotes the duality pairing 
between $D(L^{-\beta/2})$ and $D(L^{\beta/2})$. 
As a consequence of \cite[Theorem 6]{NOS16},
the above problem
has a unique solution, which can be explicitly written as
\beq\label{e:sol}
u(t):=u(t,\cdot)=E(t)v+\int_0^t W({r})f(t-{r})\, d{r} .
\eeq
Here, for $w\in L^2(\Omega)$,
\beq\label{e:sol1}
E(t)w:=e_{\tpow,1}(-t^\tpow L^\spow)w=\sum_{j=1}^\infty e_{\tpow,1}(-t^\tpow \lambda_j^\spow)(w,\psi_j)\psi_j
\eeq
and
\beq\label{e:sol2}
W(t)w:=t^{\tpow-1}e_{\tpow,\tpow}(-t^\tpow L^\spow)w=\sum_{j=1}^\infty t^{\tpow-1}e_{\tpow,\tpow}(-t^\tpow \lambda_j^\spow)(w,\psi_j)\psi_j ,
\eeq
with $e_{\gamma,\mu}(z)$ denoting the Mittag-Leffler function (see the defintion \eqref{e:mlf}).
We also refer to Theorem 2.1 and 2.2 of \cite{SY11} for a detailed proof of the above formula when $\spow=1$, noting that
the argument is similar for any $\spow\in(0,1)$.

A major difficulty in approximation solutions of \eqref{e:wp} involves
time stepping in the presence of the fractional time derivative.  The L1
time stepping method was developed  in \cite{LX07} and applied for the case $\spow=1$.
Letting $\dt$ be the time step, it was shown in \cite{LX07} that the L1 scheme gives the rate of convergence 
$O(\dt^{2-\gamma})$ provided that the solution is twice continuously differentiable in
time. 
 For the homogeneous problem ($f=0$), the L1 scheme is guaranteed to yield 
first order convergence 
assuming the initial data $v$ is in $L^2(\Omega)$
(see \cite{JLZ16a}).  See also \cite{JLZ16b} and the reference therein
for other time discretization methods and error analyses.  We also 
refer to \cite{thesis} for the backward time 
stepping scheme for the case $\tpow=1$.

The numerical approximation to the solution \eqref{e:sol} has been studied recently in \cite{NOS16}. 
The main difficulty is to discretize the fractional differential operators $\partial_t^\tpow$ and $L^\spow$ simultaneously.
In \cite{NOS15}, the factional-in-space operator $L^\spow$ was approximated
as a Dirichlet-to-Neumann mapping via a Caffarelli-Silvestre extension
problem \cite{CS07} on $\Omega\times(0,\infty)$.
In \cite{NOS16}, Nochetto {\it et. al.}  analyze 
an L1  time stepping scheme for \eqref{e:wp}  in the context of the
Caffarelli-Silvestre extension problem and obtain 
a rate of convergence in time of $O(\dt^\theta)$ with $\theta\in(0,1/2)$ (see Theorem 3.11 in \cite{NOS16}).

%In this paper, we discretize the fractional-in-space operator $L^\spow$ relying on the finite element method.
%An existing finite element method in the literature to approximate $L^\spow$
%is given by \cite{NOS15,}. The fractional power of $L$ can be understood
%as a Dirichlet-to-Neumann mapping via an extension problem on $\Omega\times(0,\infty)$, which is called the Caffarelli-Silvestre extension problem \cite{CS07}. 
%%The numerical analysis of this approach to deal with the problem \eqref{e:p} is carried out in \cite{NOS16}.
%We also refer to \cite{ILIC05,ILIC06} for a finite difference approach.
%In terms of the discretizing the left-sided Caputo fractional derivative,
%a commonly used finite difference method is the so-called L1 scheme proposed by .
%Letting $\dt$ be the time step, the L1 scheme gives the rate of convergence 
%$O(\dt^{2-\gamma})$ provided that the solution is twice continuously differentiable in
%time. The first order convergence is guaranteed for the homogeneous problem $(f=0)$ with non-smooth initial data (see \cite{JLZ16a}).
%See also \cite{NOS16} for the analysis for time stepping schemes in the context of the Caffarelli-Silvertre extension problem.
%We also refer to \cite{JLZ16b} and the reference therein for more time discretization methods and error analysis.

The goal of the paper is to approximate the solution of \eqref{e:wp} directly based on the solution formula \eqref{e:sol}.
Our approximation technique and its numerical analysis 
relies on the Dunford-Taylor integral representation of the solution formula \eqref{e:sol}. 
Such a numerical method has been developed for the classical parabolic problem 
\cite{BLP17,thesis} (i.e. the case $\tpow=1$) and the stationary problem \cite{BP15}; see also \cite{BP16} when the 
differential operator $L$ is regularly accretive \cite{kato1961}. 

The outline of the remainder of the paper is as
follows. Section~\ref{s:preli} provides some notation and preliminaries
related to  \eqref{e:p}.
In Section~\ref{preli:FEM}, we review some classical results from 
the finite element discretization and provide a key result (Theorem~\ref{l:semi-sp})
instrumental to derive error estimates for semi-discrete schemes. 
%Theorem~\ref{l:semi-sp} gives an error estimate for the semi-discrete
%approximation in fractional Sobolev spaces of order $s$, with
%$s\in [0,1]$.   As expected, the rate of convergence depends on the
%smoothness of the solution which, in term, depends on the smoothness of
%the initial data and the regularity pickup associated with the spatial
%exponent $\beta$.
In Section~\ref{s:h}, we study the semi-discrete approximation
$E_h(t) v:=e_{\tpow,1}(-t^\tpow L_h^\spow)\pih v$ to  $E(t)v$.
Here $L_h$ is the Galerkin finite element approximation of $L$ in the continuous piecewise linear
finite element space $\vh$ and $\pih$ denote the $L^2$
projection onto $\vh$.  
%In Section~\ref{s:h}, we first study the semi-discrete approximation
%$E_h(t) v:=e_{\tpow,1}(-t^\tpow L_h^\spow)\pih v$ to  $E(t)v$.
We subsequently apply a sinc quadrature scheme to the Dunford-Taylor integral 
representation of the semi-discrete solution.
For the sinc approximation,
we choose the hyperbolic contour $z(y)=b(\cosh(y)+i\sinh(y))$
for $y\in \RR$, with  $b\in (0,\lambda_1/\sqrt{2})$. Here  
$\lambda_1$ denotes the smallest eigenvalue of $L$. 
Theorem~\ref{l:semi-sp} directly gives an error estimate for the semi-discrete
approximation in fractional Sobolev spaces of order $s$, with
$s\in [0,1]$.   As expected, the rate of convergence depends on the
smoothness of the solution which, in term, depends on the smoothness of
the initial data and the regularity pickup associated with the spatial
exponent $\beta$.
Theorem~\ref{l:hsincquad} proves that for  a quadrature of $2N+1$ points with
 quadrature spacing $k=cN^{-1/2}$ and  $c$ depending on $\spow$, the sinc quadrature error is bounded
by $Ct^{-\tpow}\exp(-c\sqrt{N})$, where the constant $C$ is independent of $t$ and $N$.
In Section~\ref{s:nh}, we focus on the approximation scheme for the non-homogeneous forcing problem.
The approximation in time is based on a pseudo-midpoint quadrature applied to the convolution in
\eqref{e:sol}, i.e., given a partition $\{t_j\}$ on $[0,t]$,
\beq
\int_{t_{j-1}}^{t_j} W_h({r})\pih f(t-{r})\, d{r}\approx
\bigg (\int_{t_{j-1}}^{t_j} W_h({r})\, d{r}\bigg)\ \pih f(t-{t_{j-\frac12}}) ,
\label{w-semi}
\eeq
where $W_h(t)$ is the semi-discrete approximation to $W(t)$.
Assuming that the forcing function $f$ is in $H^2(0,t;L^2)$, 
We show in Theorem~\ref{t:geo} that the error in the approximation 
\eqref{w-semi} in  time is $O(\cN^{-2})$ under a  geometric partition
refined towards $t=0$  (with $C(\tpow)\cN\log_2 \cN$ subintervals).
We then  apply an exponentially convergent sinc quadrature scheme to approximate the Dunford-Taylor integral representation of the discrete operator $\int_{t_{j-1}}^{t_j} W_h({r})\, d{r}$. 
Theorem~\ref{t:sincquad2} shows that the sinc quadrature leads to an
additional error which is $O(\log(\cN)\exp(-\sqrt{cN}))$.
Some technical proofs are given in Appendices~\ref{a:lemma1} and \ref{a:lemma2}.

Throughout this paper, $c$ and $C$ denote generic constants. 
We shall sometimes explicity indicate their dependence when appropriate.

\section{Notation and Preliminaries}\label{s:preli}
\subsection{Notation} 
Let $\Omega \subset \mathbb R^d$ be a bounded polygonal domain with Lipschitz boundary.
Denote by $L^2(\Omega)$ and $H^1(\Omega)$ (or in short $L^2$ and $H^1$) the standard Sobolev spaces of complex valued functions equipped with the norms 

$$\|u\|:=\|u\|_{L^2}:=\bigg(\int_\Omega |u|^2\, dx\bigg)^{1/2} \quad\text{and}\quad\|u\|_{H^1}:=(\|u\|^2_{L^2}+ \| | \nabla u | \|_{L^2}^2)^{1/2} .$$
The $L^2$ scalar product is denoted $(\cdot,\cdot)$:
$$
(v,w) := \int_{\Omega} v(x) \overline w(x)\, dx.
$$
We also denote by $H^1_0:=H^1_0(\Omega) \subset H^1(\Omega)$ the closed subspace of $H^1$ consisting of functions with vanishing traces. 
Thanks to the Poincar\'e inequality, we will use the semi-norm $|\cdot|_{H^1}:=\| | \nabla (\cdot) |\|$ as the norm on $H^1_0$.
The dual space of $H^1_0$ is denoted $H^{-1}:=H^{-1}(\Omega)$ and is equipped with the dual norm:
$$
\|F\|_{H^{-1}}:=\sup_{\theta\in H^1_0}\frac{\langle F,\theta\rangle}{|\theta|_{H^1}},
$$
where $\langle\cdot,\cdot\rangle$ stands for the duality pairing between $H^{-1}$ and $H^1_0$.

The norm of an operator $A: B_1 \rightarrow B_2$ between two Banach spaces $(B_1, \|.\|_{B_1})$ and $(B_2, \|.\|_{B_2})$ is given by
$$
\| A \|_{B_1 \to B_2} = \sup_{v \in B_1, \ v \not = 0} \frac{\|A v\|_{B_2}}{\| v \|_{B_1}}
$$
and in short $\| A\|$ when $B_1 = B_2 = L_2$.

\subsection{The Unbounded Operator $L$}\label{ss:unbounded} Let us assume that  $d(\cdot,\cdot)$ is a Hermitian, coercive and 
sesquilinear form on $\HO\times\HO$.
We denote by $c_0$ and $c_1$ the two positive constants such that
$$
 c_0 | v |^2_{H^1}\le d(v,v); \qquad  |d(v,w)| \leq c_1 |
v|_{H^1} | w |_{H^1},\Forall v,w\in H^1_0.
$$
Furthermore, we let $T:H^{-1} \rightarrow
H^1_0$ be the solution operator, i.e. for $f \in H^{-1}$,  $Tf:=w \in H^1_0$, where $w$ is the unique solution (thanks to Lax-Milgram lemma) of 
\beq
d(w,\theta)= \langle f,\theta \rangle ,\Forall \theta\in H^1_0 .
\label{laxm}
\eeq
%Here $(\cdot,\cdot)$ is the $L^2(\Omega)$ inner product.
Following Section 2 of \cite{kato1961}, see also Section~2.3 in \cite{BP16}, we denote $L$ to be the inverse of $T|_{L^2}$ and define $D(L):=\text{Range}(T|_{L^2})$.

\subsection{The Dotted Spaces}\label{ss:dotted}
The operator $T$ is compact and symmetric 
on $L^2$. 
Fredholm theory guarantees the existence of an $L^2$-orthonormal
basis of eigenfunctions $\{\psi_j\}_{j=1}^\infty$ with non-increasing 
real eigenvalues $\mu_1\geq\mu_2 \geq \mu_3 \geq ... >0$. 
For every positive integer $j$, $\psi_j$ is 
also an eigenfunction of $L$ with corresponding eigenvalue $\lambda_j=1/\mu_j$.
The decay of the coefficients $(v,\psi)$ in the representation
$$
v = \sum_{j=1}^\infty (v,\psi_j) \psi_j 
$$
characterizes the dotted spaces $\dH^s$. 
Indeed, for $s\geq 0$, we set
$$
\dH^s:= \left\lbrace v\in L^2\ \text{s.t.} \  \sum_{j=1}^\infty \lambda_j^s |(v,\psi_j)|^2<\infty
\right\rbrace.
$$
On $\dH^s$, we consider the natural norm
 $$
 \|v\|_{\dH^s} := \bigg(\sum_{j=1}^\infty \lambda_j^s |(v,\psi_j)|^2\bigg)^{1/2} .
 $$ 
We also denote by $\dH^{-s}$ the dual space of 
$\dH^s$ for $s\in [0,1]$. It is known that (see for instance \cite{BP16})
%i.e. if $\lan F,\psi_j\ran$ denotes the action of $F \in \dH^{-s}(\Omega)$ applied to $\psi_j$ then
$$
\dH^{-s}= \left\{F\in H^{-1} \ \text{s.t} \ \|F\|_{\dH^{-s}}:= \bigg(\sum_{j=1}^\infty
\lambda_j^{-s}|\lan F,\psi_j\ran|^{2}\bigg)^{1/2}<\infty\right \}.
$$
Note that, we identify $L^2$ functions with $H^{-1}$ functionals by
$\langle F,\cdot\rangle :=(f,\cdot)\in H^{-1}$ for $f \in L^2$.
\subsection{Fractional Powers of Elliptic Operators}
Let $L$ be defined from a Hermitian, coercive and 
sesquilinear form on $\HO\times\HO$ as described in Section~\ref{ss:unbounded}.
For $\spow\in (0,1)$, the fractional power of $L$ is given by
\beq\label{spe}
L^\spow v := \sum_{j=1}^\infty \lambda_j^\spow (v,\psi_j)\psi_j, \qquad \forall v \in D(L^\spow):=\dH^{2\spow}.
\eeq
In addition, we define the associated sesquilinear form $A: \dH^\spow \times \dH^\spow \rightarrow \mathbb C$ by
\beq\label{Asf}
 A(v,w):= (L^{\spow/2} v,L^{\spow/2} w) =\sum_{j=1}^\infty \lambda_j^{\spow} (v,\psi_j)
  \overline{(w,\psi_j)},
\eeq
which satisfies $A(v,v)=\|v\|_{\dH^{\spow}}^2$.

\subsection{Intermediate Spaces and the Regularity Assumption}
As we saw above, the dotted spaces relies on the eigenfunction decomposition of a compact operator. 
These are natural spaces to consider fractional powers of operators but are less adequate to describe standard smoothness properties.
The latter are better characterized by the intermediate spaces $\HH^s$ defined for $s\in[-1,2]$ by real interpolation
\begin{equation}\label{e:interpolation_spaces}
\HH^s:=\left\{\bal &[\HO, H^1_0\cap H^2]_{s-1,2},&  1\le s\le 2,\\
				  &[L^2,H^1_0]_{s,2},&  0 \le s \le 1,\\
				  &[H^{-1},L^2]_{s+1,2},& -1\le s \le 0.\eal \right.
				  \end{equation}
%Note that we identify a $L^2$ function $f$ in $H^{-1}$ by letting $F=(f,\cdot)\in H^{-1}$.

In order to link the two set of functional spaces introduced above, we assume
the following elliptic regularity condition:

\begin{assumption}[Elliptic Regularity]\label{regularity}
There exists $\alpha\in(0,1]$ so that
\begin{enumerate}[(a)]
 \item  $T$ is a bounded map of $\HH^{-1+\alpha}$ into $\HH^{1+\alpha}$;
 \item  $L$ is a bounded operator from $\HH^{1+\alpha}$ to 
$\HH^{-1+\alpha}$.
\end{enumerate}
\end{assumption}

Under the above assumption we have the following equivalence property:
\begin{proposition}[Equivalence, Proposition 4.1 in \cite{BP15}]\label{p:equiv}
Suppose that Assumption~\ref{regularity} holds for $\alpha \in (0,1]$. 
Then the spaces $\HH^s$ and $\dot{H}^s$ coincide for $s\in[-1,1+\alpha]$ with equivalent norms. 
\end{proposition}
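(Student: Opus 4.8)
The plan is to prove the norm equivalence at the three ``anchor'' points $s=-1$, $s=1$, $s=1+\alpha$ and then fill in the remaining values of $s$ by interpolation. First, observe that at $s=0$ both spaces are $L^2$ with identical norms, so $\HH^0=\dH^0$ trivially. At $s=1$, the space $\HO$ with the norm $|\cdot|_{H^1}$ is equivalent to $\dH^1$: indeed, the coercivity and continuity bounds on $d(\cdot,\cdot)$ give $c_0|v|_{H^1}^2\le d(v,v)=\|v\|_{\dH^1}^2\le c_1|v|_{H^1}^2$, using that $d(v,v)=\sum_j\lambda_j|(v,\psi_j)|^2$ by the spectral decomposition of $L$ (equivalently, $d(v,w)=(L^{1/2}v,L^{1/2}w)$). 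By duality, identifying $L^2$ inside $H^{-1}$ as in the text, this also yields $\HH^{-1}=H^{-1}=\dH^{-1}$ with equivalent norms.

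Next I would establish equivalence at the top endpoint $s=1+\alpha$, which is where Assumption~\ref{regularity} enters. On one hand, part~(b) of the assumption says $L:\HH^{1+\alpha}\to\HH^{-1+\alpha}$ is bounded; combined with the already-established identification $\HH^{-1+\alpha}\simeq\dH^{-1+\alpha}$ for $-1+\alpha\in[-1,1]$ (which follows by interpolating the $s=-1$ and $s=1$ equivalences), we get $\|Lv\|_{\dH^{-1+\alpha}}\le C\|v\|_{\HH^{1+\alpha}}$, and since $\|Lv\|_{\dH^{-1+\alpha}}=\big(\sum_j\lambda_j^{-(-1+\alpha)}|\lambda_j(v,\psi_j)|^2\big)^{1/2}=\|v\|_{\dH^{1+\alpha}}$, this gives $\|v\|_{\dH^{1+\alpha}}\le C\|v\|_{\HH^{1+\alpha}}$. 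For the reverse inequality, I would use part~(a): $T:\HH^{-1+\alpha}\to\HH^{1+\alpha}$ is bounded, so for $w=Tf$ with $f\in\HH^{-1+\alpha}$ one has $\|w\|_{\HH^{1+\alpha}}\le C\|f\|_{\HH^{-1+\alpha}}=C\|Lw\|_{\HH^{-1+\alpha}}\le C\|Lw\|_{\dH^{-1+\alpha}}=C\|w\|_{\dH^{1+\alpha}}$, where again the middle step uses $\HH^{-1+\alpha}\simeq\dH^{-1+\alpha}$; since $\text{Range}(T|_{L^2})=D(L)$ is dense in $\dH^{1+\alpha}$, this extends to all of $\dH^{1+\alpha}$. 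Hence $\HH^{1+\alpha}=\dH^{1+\alpha}$ with equivalent norms.

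Finally, I would interpolate. Both scales behave well under real interpolation: for the intermediate spaces this is their definition \eqref{e:interpolation_spaces}, and for the dotted spaces $\dH^s$ one checks (via the spectral representation, which makes $L$ unitarily equivalent to a multiplication operator on a weighted $\ell^2$, i.e.\ a diagonal operator) that $[\dH^{s_0},\dH^{s_1}]_{\theta,2}=\dH^{(1-\theta)s_0+\theta s_1}$ with equivalent norms. Applying interpolation between $s_0=-1$ and $s_1=1+\alpha$ then gives $\HH^s=\dH^s$ with equivalent norms for every $s\in[-1,1+\alpha]$, using at the endpoints the equivalences established above (and noting the endpoints match: $\HH^{-1}=\dH^{-1}$ and $\HH^{1+\alpha}=\dH^{1+\alpha}$). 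I expect the main obstacle to be the careful bookkeeping in the $s=1+\alpha$ step --- namely verifying that the target space $\HH^{-1+\alpha}$ of $L$ (resp.\ source space of $T$) is the \emph{same} as $\dH^{-1+\alpha}$, which is not part of the assumption but must itself be obtained by interpolating the lower-order equivalences first, so the argument has to be sequenced correctly: establish $s\in[-1,1]$ by interpolation from the endpoints, and only then bootstrap up to $s=1+\alpha$.
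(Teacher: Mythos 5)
The paper itself does not prove this proposition---it is imported verbatim as Proposition~4.1 of \cite{BP15}---and your argument reproduces the proof given there: equivalence at the anchor points $s=0,\pm1$ from coercivity and continuity of $d(\cdot,\cdot)$ plus duality, equivalence at $s=1+\alpha$ from the two mapping properties in Assumption~\ref{regularity} combined with the already-established identification $\HH^{\alpha-1}\simeq\dH^{\alpha-1}$, and real interpolation (with reiteration to reconcile the piecewise definition of $\HH^s$) to fill in the remaining exponents. Your closing remark about sequencing---establishing $s\in[-1,1]$ first and only then bootstrapping to $s=1+\alpha$---is precisely the point that makes the argument go through, so the proposal is correct and follows essentially the same route as the cited proof.
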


Notice that Assumption~\ref{regularity} is quite standard and holds for a large class of sesquilinear forms $d(\cdot,\cdot)$.
An important example is the diffusion process given by
$$
d(u,v) = \int_\Omega a(x) \nabla u \cdot \nabla v\, dx
$$
defined on $H^1_0 \times H^1_0$, 
where $a\in L^\infty(\Omega)$ satisfies 
$$
0<c_0 \leq a(x) \leq c_1 \qquad \textrm{for a.e.}\quad  x\in \Omega.
$$
The $\alpha$ in Assumption ~\ref{regularity} is related to the domain
$\Omega$ and the smoothness of the coefficients.  For example, if
$\Omega$ is convex and $a$ is smooth, Assumption ~\ref{regularity} holds
for any $\alpha$ in $(0,1]$.  In contrast, for the two dimensional L-shaped domain and
smooth $a$, Assumption ~\ref{regularity} only holds for
$\alpha\in(0,2/3)$.

\subsection{The Mittag-Leffler Function}
The Mittag-Leffler functions are instrumental to represent the solution of fractional time evolution, see \eqref{e:sol1} and \eqref{e:sol2}.
We briefly introduce them together with their properties used in our argumentation. We refer to Section 1.8 in \cite{KST06} for more details.

For $\tpow>0$ and $\mu\in\RR$,
the two-parameter Mittag-Leffler function $\egm(z)$ is defined by
\beq\label{e:mlf}
	\egm(z):=\sum_{k=0}^\infty\frac{z^k}{\Gamma(k\tpow+\mu)},\qquad z\in\CC .
\eeq
These functions are  entire functions (analytic in $\CC$).  We note that
\cite[equation (3.1.42)]{KST06} (see also \cite{MR2053894}) implies that $u(t)=\eg(-\lambda t^\tpow)$ for
$t,\lambda>0$ satisfies
$$
\partial^\tpow_t u +\lambda u=0,
$$
i.e., is a solution of the scalar homogeneous version of the first
equation of \eqref{e:p}.  For this reason, the function  $\eg(-\lambda
t^\tpow)$ will play a major role in our analysis.   We also note that 
\begin{equation}\label{e:fd2}
\partial_t e_{\tpow,1}(-t^\tpow\lambda^\spow)=\lambda^\spow t^{\tpow-1}e_{\tpow,\tpow}(-t^\tpow\lambda^\spow)
\end{equation}
and
\begin{equation}\label{e:fd2_2}
\partial_t e_{\tpow,\tpow}(-t^\tpow\lambda^\spow)=\lambda^\spow t^{\tpow-1}\left((\gamma-1)e_{\tpow,2\tpow}(-t^\tpow\lambda^\spow)
-e_{\tpow,2\tpow-1}(-t^\tpow\lambda^\spow)\right).
\end{equation}
Recall that $\partial^\gamma_t$ always denotes the left-sided Caputo fractional derivative \eqref{e:cfd}.

Another critical property for our study is their decay when $|z|\to \infty$ in a positive sector: For $0<\tpow<1$, $\mu\in\RR$ and $\frac{\tpow\pi}{2}<\zeta< \tpow\pi$,
there is a constant $C$ only depending on $\tpow,\mu,\zeta$ so that
\beq\label{ml-bound-scalar}
	|\egm(z)|\le\frac{C}{1+|z|},\quad\hbox{for }\zeta\le |\arg(z)|\le\pi .
\eeq

\subsection{Solution via superposition}\label{s:weak_hom}

The solution $u$ of \eqref{e:wp} is the superposition of two solutions: the homogeneous solution $f=0$ and the non-homogeneous solution $v=0$,
\begin{equation}\label{e:full_sol}
u(t) = E(t)v+\int_{0}^t W(s) f(t-s)~ds,
\end{equation}
 where $E(t)$ is defined by \eqref{e:sol1} and $W(t)$ by \eqref{e:sol2}.
Following \cite{SY11}, we have that $u \in C^0([0,T];L^2)$ and in particular
$ u(0)=v$.

We discuss the approximation of each term in the decomposition separately.
For the homogeneous problem ($f=0$), we use the Dunford-Taylor integral representation of $u(t)= E(t)v$,
\beq\label{DFint}
u(t) = \frac 1 {2\pi i} \int_\cC e_{\tpow,1}({-t^\tpow z^\spow}) R_z(L)v\, dz.
\eeq
Here $R_z(L):=(zI-L)^{-1}$ and $z^{\spow}:= e^{{\spow} \ln z}$ with the logarithm defined with branch
cut along the negative real axis.
Given $r_0\in (0,\lambda_1)$, the contour $\cC$ 
consists of three segments (see Figure~\ref{f:anal}):
\begin{equation}\label{e:contour}
\begin{aligned}
\cC_1&:=\left\lbrace z(r):=re^{-i\pi/4}  \text{ with  }r  \text{ real going from }+\infty \text{ to } r_0\right\rbrace \text{ followed by } \\
\cC_2&:=\left\lbrace z(\theta):=r_0 e^{i\theta} \text{ with }\theta \text{ going from }-\pi/4 \text{ to } \pi/4\right\rbrace \text{ followed by } \\
\cC_3&:=\left\lbrace z(r):=re^{i\pi/4} \text{ with } r \text{ real going from } r_0 \text{ to }+\infty \right\rbrace.
\end{aligned}
\end{equation}
\begin{figure}[H]
\centering
\includegraphics[scale=.4]{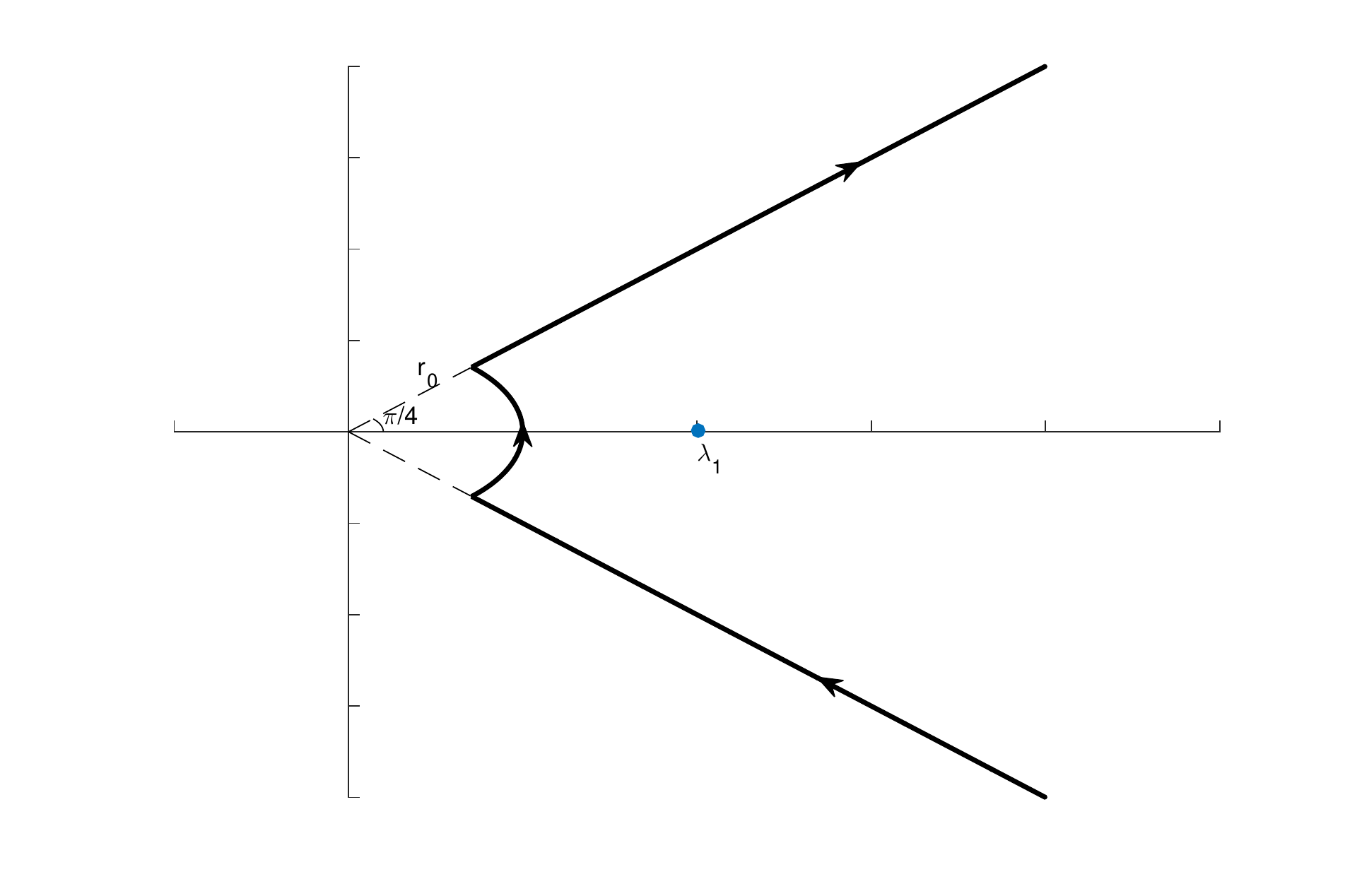}
\caption{The contour $\cC$ given by \eqref{e:contour}.}
\label{f:anal}
\end{figure}

We use an analogous representation for $W(s)$, namely,
\beq
\label{Wsint}
W(s)v = \frac {s^{\gamma-1}} {2\pi i} \int_\cC e_{\tpow,\tpow}({-s^\tpow z^\spow}) R_z(L)v\, dz.
\eeq
The justification of \eqref{DFint} and \eqref{Wsint} are a consequence
of \eqref{ml-bound-scalar} and standard Dunford-Taylor integral
techniques, see, \cite{yoshida,MR1192782} for additional details.

\section{Finite Element Approximations}\label{preli:FEM}

\subsection{Subdivisions and Finite Element Spaces } 
Let $\{\mathcal{T}_h\}_{h>0}$ be a sequence of globally shape regular and quasi-uniform 
conforming subdivisions of $\Omega$ made of simplexes, i.e. there are positive
constants  $\rho$ and $c$ independent
of $h$ 
such that if for $\uptau \in \mathcal T_h$, $h_\uptau$ denotes the diameter of $\uptau$ and $r_\uptau$
denotes the radius of the largest ball which can be inscribed in $\uptau$,
then
\blal\label{shape-regular}
&\hbox{(shape regular)}\qquad \max_{\uptau \in \mathcal T_h} \frac{h_\uptau}{r_\uptau} \le c,\quad \hbox{and}\\
\label{ineq:quasi}
 &\hbox{(quasi-uniform)}\qquad  \max_{\uptau\in\mathcal{T}_h}h_\uptau \le \rho\min_{\uptau\in\mathcal{T}_h}
  h_\uptau.
\elal
Fix $h>0$ and 
denote by $\vh\subset \HO$ the space of continuous piecewise linear finite element functions
with respect to $\mathcal{T}_h$ and by $\NUMDOF$ the dimension of $\vh$. 

The $L^2$ projection onto $\vh$ is denoted by $\pih: L^2 \rightarrow \vh$ and satisfies
$$
(\pih f,\phi_h)=(f,\phi_h),\qquad\Forall \phi_h\in\vh .
$$
For $s\in [0,1]$ and $\sigma>0$ satisfying $s+\sigma\le 2$, Lemma~5.1 in \cite{BP16} guarantees the existence of a constant $c(s,\sigma)$ independent of $h$ such that 
\begin{equation}\label{pih-approx}
\|(I-\pi_h)f\|_{\HH^s} \le c(s,\sigma) h^{\sigma}\|f\|_{\HH^{s+\sigma}} .
\end{equation}
In addition, for any $s\in [0,1]$, there exists a constant $c$ such that
\begin{equation}\label{pih-bound}
\|\pi_h f\|_{\HH^{s}}\le c \|f\|_{\HH^{s}}.
\end{equation}
The case $s=0$ follows from the definition of the $L^2$-projection, the case $s=1$ is treated in \cite{BY14,BX91} and the general case follows  by interpolation. 

\subsection{Discrete Operators}\label{ss:discrete_op}
The finite element analogues of the operators $T$ and $L$ given in Section~\ref{ss:unbounded} are defined as follows:
For $F\in H^{-1}$, $T_h:H^{-1}\rightarrow\vh$ is defined by 
$$d(T_h F,\phi_h)=\langle F,\phi_h \rangle,  \Forall \phi_h\in \vh$$
and $L_h:\vh \rightarrow \vh$ is given by
$$
(L_h v_h,\phi_h)=d(v_h,\phi_h), \Forall \phi_h\in \vh.
$$
so that $T_h|_{\vh}=L_h^{-1}$. 

 We now recall the following finite element error estimates.
\begin{proposition}[Lemma 6.1 in \cite{BP16}]\label{prop:T_error}
  Let Assumption~\ref{regularity}~(a) holds for some $\alpha \in (0,1]$. Let $s\in [0,\frac 1 2]$ and set $\alpha^*:=\frac 1 2 (\alpha+\min(\alpha,1-2s))$. There is a constant $C$ independent of $h$ such that   \beq\label{duality}
  \|T-T_h\|_{\dot{H}^{\alpha-1}\to \dot{H}^{2s}}\le C h^{2\alpha^*}.
  \eeq
\end{proposition}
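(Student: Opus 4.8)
The plan is to run an Aubin--Nitsche duality argument whose two inputs are a C\'ea-type energy estimate and the elliptic regularity of Assumption~\ref{regularity}~(a), interpolated down to intermediate smoothness. First I would record the energy estimate: for any $\phi\in H^{-1}$ the error $(T-T_h)\phi$ lies in $\HO$ and satisfies the Galerkin orthogonality $d((T-T_h)\phi,\chi_h)=0$ for all $\chi_h\in\vh$, which follows at once from the defining relations of $T$ and $T_h$; coercivity and continuity of $d$ then give the quasi-optimality $|(T-T_h)\phi|_{H^1}\le (c_1/c_0)\,\inf_{\chi_h\in\vh}|T\phi-\chi_h|_{H^1}$. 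Next I would combine the bound $T\colon\HH^{-1+\alpha}\to\HH^{1+\alpha}$ from Assumption~\ref{regularity}~(a) with the elementary bound $T\colon H^{-1}=\HH^{-1}\to\HH^1=\HO$ and interpolate, using \eqref{e:interpolation_spaces} and Proposition~\ref{p:equiv} to move freely between the dotted and intermediate scales, to obtain $\|T\phi\|_{\HH^{1+r}}\le C\|\phi\|_{\HH^{-1+r}}$ for every $r\in[0,\alpha]$. Together with the standard finite element approximation estimate $\inf_{\chi_h\in\vh}|v-\chi_h|_{H^1}\le C h^{r}\|v\|_{\HH^{1+r}}$ for $r\in[0,1]$, this produces the workhorse bound
\beq\label{e:energy-sketch}
|(T-T_h)\phi|_{H^1}\le C\,h^{r}\,\|\phi\|_{\HH^{-1+r}},\qquad r\in[0,\alpha].
\eeq

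Taking $r=\alpha$ in \eqref{e:energy-sketch}, together with the identification $\dH^{\alpha-1}=\HH^{\alpha-1}$ (Proposition~\ref{p:equiv}, applicable since $\alpha-1\in[-1,0]$) and the equivalence of $\|\cdot\|_{\dH^1}$ with $|\cdot|_{H^1}$ on $\HO$, already disposes of the endpoint $s=\frac12$, where $2\alal=\alpha$. For $s\in[0,\frac12)$ I would argue by duality. Writing $w:=(T-T_h)f$ and using that $\dH^{-2s}$ is the dual of $\dH^{2s}$ with pairing extending the $L^2$ inner product,
\beq\label{e:dual-sketch}
\|w\|_{\dH^{2s}}=\sup_{0\ne g\in\dH^{-2s}}\frac{|\lan g,w\ran|}{\|g\|_{\dH^{-2s}}}.
\eeq
For a fixed such $g$, since $g\in H^{-1}$ we have $Tg\in\HO$ and $\lan g,w\ran=d(Tg,w)$; subtracting $d(T_hg,w)=0$ (Galerkin orthogonality, using that $d$ is Hermitian and $T_hg\in\vh$) and applying the continuity of $d$ yields $|\lan g,w\ran|\le c_1\,|(T-T_h)g|_{H^1}\,|w|_{H^1}$. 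Set $\as:=\min(\alpha,1-2s)$; since $-1+\as\le-2s$ we have $\dH^{-2s}=\HH^{-2s}\hookrightarrow\HH^{-1+\as}$, so \eqref{e:energy-sketch} with $r=\as\le\alpha$ gives $|(T-T_h)g|_{H^1}\le C h^{\as}\|g\|_{\dH^{-2s}}$, while \eqref{e:energy-sketch} with $r=\alpha$ gives $|w|_{H^1}\le C h^{\alpha}\|f\|_{\dH^{\alpha-1}}$. Inserting both into \eqref{e:dual-sketch} yields $\|(T-T_h)f\|_{\dH^{2s}}\le C h^{\alpha+\as}\|f\|_{\dH^{\alpha-1}}$, which is \eqref{duality} because $\alpha+\as=2\alal$.

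I expect the only genuinely delicate point to be the regularity of the dual problem: the test datum $g$ is only as smooth as $\dH^{-2s}=\HH^{-2s}$, so when $2s$ is close to $1$ one cannot push $Tg$ all the way into $\HH^{1+\alpha}$ but only into $\HH^{1+(1-2s)}$, and this truncation is exactly the source of the $\min(\alpha,1-2s)$ in the exponent of \eqref{duality}. The remaining work --- verifying that the interpolation identities among the dotted spaces $\dH^s$, the intermediate spaces $\HH^s$, and the energy norm hold over the ranges needed to invoke \eqref{e:energy-sketch} at the two values $r=\alpha$ and $r=\as$ --- is routine bookkeeping, and the finite element ingredients are entirely classical.
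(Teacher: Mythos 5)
Your argument is correct, and it is essentially the proof of the cited result (Lemma 6.1 of \cite{BP16}, which the paper invokes without reproving): a C\'ea/energy estimate obtained by interpolating the regularity of $T$ between $\HH^{-1}\to\HH^{1}$ and $\HH^{-1+\alpha}\to\HH^{1+\alpha}$, followed by an Aubin--Nitsche duality step in which the dual datum $g\in\dH^{-2s}$ only admits the reduced regularity index $\min(\alpha,1-2s)$, which is exactly where the exponent $2\alpha^*=\alpha+\min(\alpha,1-2s)$ comes from. No gaps.
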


Similar to the operator $T$, $T_h|_{\vh}$ has positive 
eigenvalues $\{\mu_{j,h}\}_{j=1}^\NUMDOF$  with corresponding 
$L^2$-orthonormal eigenfunctions $\{\psi_{j,h}\}_{j=1}^\NUMDOF$. 
The eigenvalues of $L_h$ are denoted
as $\lambda_{j,h}:=\mu_{j,h}^{-1}$
 for $j=1,2,\ldots,\NUMDOF$.
Then the discrete fractional operator $L_h^\spow\ :\ \vh \rightarrow \vh$ is then given by
$$
L_h^\spow v_h:=\sum_{j=1}^\NUMDOF \lambda_{j,h}^\spow (v_h,\psi_{j,h})\psi_{j,h}.$$
Its associated sesquilinear form reads
\begin{equation}\label{e:Ah}
A_h(v_h,w_h):=(L_h^{\beta/2} v_h, L_h^{\beta/2} w_h) = \sum_{j=1}^\NUMDOF
 \lambda_{j,h}^\spow (v_h,\psi_{j,h})\overline{(w_h,\psi_{j,h})},
\end{equation}
for all $v_h, w_h\in \vh$.

For any $s\in [0,1]$, the dotted spaces described in Section~\ref{ss:dotted} also have discrete counterparts $\dot{H}_h^s$,
which are characterized by their norms 
\begin{equation}\label{e:dotted_discrete_norm}
\|v_h\|_{\dot{H}_h^s}:=\Bigg(\sum_{j=1}^\NUMDOF
\lambda_{j,h}^s|(v_h,\psi_{j,h})|^2\Bigg)^{1/2},\qquad \text{for } v_h\in \vh .
\end{equation}
On $\vh$, the two dotted norms are equivalent: For $s\in[0,1]$, there exists a constant $c$ independent of $h$ such that for all  $v_h\in \vh$,
\begin{equation}\label{ineq:H_h_H}
  \frac{1}{c}\|v_h\|_{\dot{H}_h^s}\le\|v_h\|_{\dot{H}^s}\le c\|v_h\|_{\dot{H}_h^s},
\end{equation}
(see Appendix A.2 in \cite{BZ00}).
From the property $\max_j \ljh \le ch^{-2}$ (cf. \cite[equation (2.8)]{BZ00}), we also deduce an inverse inequality in discrete dotted spaces:
For $s,\sigma\ge 0$, we have
\beq\label{inverse}
	\|v_h\|_{\dH_h^{s+\sigma}}\le c h^{-\sigma}\|v_h\|_{\dH_h^s},\qquad\For v_h\in \vh .
\eeq

\subsection{The Semi-discrete Scheme in Space}
We propose a Galerkin finite element method for the space discretization of \eqref{e:sol}.
This is to find $u_h(t)\in \vh$
satisfying 
\begin{equation}\label{e:weakfe}
\left\lbrace
\begin{aligned}
  (\partial^\tpow_t u_{h}(t),\phi_h)+A_h(u_h(t),\phi_h)&=(f,\phi_h), \qquad \text{for }
  t\in (0,\tt],\ \hbox{and } \phi_h\in \vh\text{, and}\\
  u_h(0)&=\pi_h v,
\end{aligned}
\right.
\end{equation}
where the bilinear form $A_h(\cdot,\cdot)$ is defined by \eqref{e:Ah} and $\pi_h$ is the $L^2$-projection onto $\vh$.
Similarly to the continuous case (see discussion in Section~\ref{s:weak_hom}), the solution of the above discrete problem is given by
\beq\label{e:dsol1}
	u_h(t)=\underbrace{e_{\tpow,1}(-t^\tpow L_h^\spow)}_{=:E_h(t)}\pih v +\int_0^t \underbrace{s^{\gamma-1} e_{\gamma,\gamma}(-s^\gamma L_h^\spow)}_{=:W_h(s)} \pih f(t-s)\, ds 
\eeq
where
\beq\label{e:fesol-dt}
	e_{\gamma,\mu}(-t^\gamma L_h^\beta) = \frac{1}{2\pi i}\int_{\cC} e_{\tpow,\mu}(-t^\tpow z^\spow) R_z(L_h)\, dz
\eeq
and $\cC$ is as in \eqref{e:contour}.

\subsection{A semi-discrete estimate}

The purpose of this section (Theorem~\ref{l:semi-sp}) is to obtain
estimates for 
\begin{equation}\label{e:to_prove_semi}
\| e_{\gamma,\mu}(-t^\gamma L^\beta)v - e_{\gamma,\mu}(-t^\gamma L_h^\beta)\pih v\|_{\dH^{2s}},
\end{equation}
which, in view of representations \eqref{e:full_sol} and \eqref{e:dsol1}, will be instrumental to derive error estimates for the space discretization.

The following lemma assesses the discrepancy between the resolvant $R_z(L)=(z-L)^{-1}$ and its finite element approximation.
Its somewhat technical proof is postponed to Appendix~\ref{a:lemma1}.

\begin{lemma}[Space Discretization of Resolvant]\label{l:residue}
Assume that Assumption~\ref{regularity} holds for some $\alpha\in(0,1]$. Let $s \in  [0,\frac 12]$ and $\delta \in [0, (1+\alpha)/2]$.
Then, there exists a positive constant $C$ independent of $h$ such that for all $\tilde \alpha$ with $2\tilde \alpha \in (0,\alpha+\min(\alpha,1-2s)]$,  $z\in\cC$ and 
$v\in\dH^{2\delta}$
\beq\label{rz-bound}
\|(\pi_h R_z(L)-R_z(L_h)\pi_h)v\|_{\dH^{2s}}\le C |z|^{-1+\tilde \alpha+s-\delta} h^{2\tilde \alpha} \|v\|_{\dH^{2\delta}}.
\eeq
\end{lemma}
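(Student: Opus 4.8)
The plan is to reduce the operator difference $\pi_h R_z(L) - R_z(L_h)\pi_h$ to a quantity controlled by the stationary error estimate in Proposition~\ref{prop:T_error}. The key algebraic identity is the resolvent-type relation obtained by writing $R_z(L) = -L^{-1}(I - zL^{-1})^{-1} = -T(I-zT)^{-1}$ on $L^2$ and similarly $R_z(L_h)\pi_h = -T_h(I-zT_h)^{-1}\pi_h$ on $\vh$ (here I use $T_h$ for $T_h|_{\vh} = L_h^{-1}$, and that $T_h\pi_h = T_h$ on the relevant range). Subtracting and using the second resolvent identity, one gets
\beq
\pi_h R_z(L) - R_z(L_h)\pi_h = R_z(L_h)\pi_h\,(L_h\pi_h - \pi_h L)\,R_z(L)
\eeq
in the appropriate weak sense, or, phrased purely in terms of $T$ and $T_h$ to avoid unbounded operators,
\beq
\pi_h R_z(L) - R_z(L_h)\pi_h = z\,R_z(L_h)\pi_h\,(T - T_h)\,z R_z(L)\; + \;(\text{lower order, } z=0 \text{ residue terms}),
\eeq
so that the whole difference carries one factor of $(T-T_h)$ flanked by two resolvent factors. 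Then \eqref{duality} supplies the $h^{2\tilde\alpha}$ gain, while the two resolvent factors, together with the shift in Sobolev index, supply the power $|z|^{-1+\tilde\alpha+s-\delta}$.

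The second ingredient is a family of resolvent bounds on the contour $\cC$. Because $z\in\cC$ stays in a sector around the positive real axis bounded away from the spectrum of $L$ and $L_h$ (both contained in $[\lambda_1,\infty)$ resp. $[\lambda_{1,h},\infty)$, with $\lambda_{1,h}\ge c>0$ uniformly in $h$), one has the standard sectorial estimates $\|R_z(L)\|_{\dH^{2a}\to\dH^{2b}} \le C|z|^{-1+b-a}$ for $0\le b-a\le 1$ (and analogously for $R_z(L_h)$ on the discrete dotted scale, using \eqref{ineq:H_h_H} to pass between $\dH^s$ and $\dH_h^s$ on $\vh$). I would prove these by expanding $R_z(L)$ in the eigenbasis $\{\psi_j\}$ and estimating $|z-\lambda_j|^{-1}\lambda_j^{\,b-a}$ pointwise in $j$: on $\cC$ one has $|z-\lambda_j|\ge c(|z|+\lambda_j)$, so $\lambda_j^{b-a}/|z-\lambda_j| \le C\,\lambda_j^{b-a}/(|z|+\lambda_j)^{b-a}\cdot(|z|+\lambda_j)^{-(1-b+a)} \le C|z|^{-1+b-a}$ when $0\le b-a\le 1$. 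Assembling: choose exponents so that $R_z(L)$ maps $\dH^{2\delta}\to\dH^{\alpha-1}$ (costing $|z|^{-1+((\alpha-1)/2-\delta)}$), then $(T-T_h): \dH^{\alpha-1}\to\dH^{2s}$ costs $h^{2\alpha^*} \gtrsim h^{2\tilde\alpha}$ (using $2\tilde\alpha\le 2\alpha^*$ so $h^{2\alpha^*}\le C h^{2\tilde\alpha}$ for $h$ bounded), then $z R_z(L_h): \dH^{2s}\to\dH^{2s}$ costs $|z|^{0}$ up to a harmless factor, and the leftover factor $z$ from the identity contributes $|z|^{+1}$; bookkeeping the powers of $|z|$ yields exactly $-1+\tilde\alpha+s-\delta$. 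One must also check the case where $\delta > (\alpha+\min(\alpha,1-2s))/2$ by interpolating, or rather the constraint $2\tilde\alpha\le\alpha+\min(\alpha,1-2s)$ together with $\delta\le(1+\alpha)/2$ keeps all intermediate Sobolev indices in the admissible range $[-1,1]$ (and $\le 1$ for $T-T_h$ by Proposition~\ref{prop:T_error}).

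The main obstacle I anticipate is making the operator identity rigorous: $L$ and $L_h$ are unbounded / act on different spaces, and $L_h\pi_h - \pi_h L$ is not literally well-defined, so the clean line above must be replaced by a Galerkin-orthogonality argument at the level of the PDEs satisfied by $w:=R_z(L)v$ and $w_h := R_z(L_h)\pi_h v$. Concretely, $w$ solves $d(w,\theta) - z(w,\theta) = (v,\theta)$ for all $\theta\in\HO$ (when $v\in L^2$) and $w_h$ solves the same with test functions in $\vh$ and $v$ replaced by $\pi_h v$; subtracting gives a Galerkin problem for $\pi_h w - w_h$ with consistency error governed by $(I-\pi_h)w$ and, crucially, by the non-coercive shift $-z(\cdot,\cdot)$, which must be absorbed using the resolvent bounds above rather than Céa's lemma. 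I would handle this by writing $\pi_h w - w_h = (T_h - \text{something}) $ applied to a residual, i.e. exploit that $\pi_h R_z(L)v - R_z(L_h)\pi_h v = R_z(L_h)\,[\,(L_h \pi_h - \pi_h L)w\,]$ interpreted as: $R_z(L_h)$ applied to the $\vh$-functional $\phi_h\mapsto d((\pi_h - I)w,\phi_h)$, and then bound that functional in $\dH_h^{2s-2}$ using $T-T_h$ estimates together with $(zI-L)w = v$. This is exactly the kind of computation deferred to Appendix~\ref{a:lemma1}; the delicate points are tracking the exact exponent on $|z|$ through each mapping and verifying that all Sobolev indices appearing stay within the range where Proposition~\ref{prop:T_error}, \eqref{pih-approx}, \eqref{pih-bound}, and the norm equivalence \eqref{ineq:H_h_H} are applicable.
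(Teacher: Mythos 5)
Your overall strategy (a resolvent perturbation identity so that one factor of $T-T_h$ supplies $h^{2\tilde\alpha}$ via Proposition~\ref{prop:T_error}, flanked by resolvent factors that supply the power of $|z|$) is the same as the paper's, but the quantitative execution has two genuine gaps, and they occur exactly where the lemma is sharp. First, the resolvent bound you rely on is applied outside its range of validity: you prove $\|R_z(L)\|_{\dH^{2a}\to\dH^{2b}}\le C|z|^{-1+b-a}$ for $0\le b-a\le 1$, but then use it with $a=\delta$, $b=(\alpha-1)/2$, where $b-a=(\alpha-1)/2-\delta\le 0$. For $b<a$ the supremum $\sup_j\lambda_j^{b-a}|z-\lambda_j|^{-1}$ is attained near $\lambda_1$ and is of size $|z|^{-1}$, not $|z|^{-1+b-a}$: a single resolvent mapping into a \emph{weaker} space cannot decay faster than $|z|^{-1}$. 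With the correct bound, your main term $z^2R_z(L_h)\pi_h(T-T_h)R_z(L)$ is only $O(|z|^{0}h^{2\alpha^*})$, which misses the factor $|z|^{-1+\tilde\alpha+s-\delta}$ (whose exponent is negative in general, e.g.\ $s=0$ and $\delta$ near $(1+\alpha)/2$) that is needed for the integral over the unbounded contour $\cC$ to converge in Theorem~\ref{l:semi-sp}.

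Second, splitting the identity into a ``main term plus lower-order residue terms'' destroys the structure that produces the decay. Expanding $(zT-I)^{-1}=zR_z(L)-I$ produces, among others, the term $\pi_h(T_h-T)$ with no resolvent at all; its norm is independent of $z$ and so cannot be bounded by $C|z|^{-1+\tilde\alpha+s-\delta}h^{2\tilde\alpha}$ as $|z|\to\infty$ along $\cC$. The factors must be kept together: the paper retains the exact identity $\pi_hR_z(L)-R_z(L_h)\pi_h=-z^{-2}(T_h-z^{-1})^{-1}\pi_h(T-T_h)(T-z^{-1})^{-1}$ and estimates the two outer factors with the weighted bound $|z|^{-\sigma}\|T^{1-\sigma}(z^{-1}I-T)^{-1}\|\le C$ of Lemma~\ref{tz-bound}. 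The point is that $(zT-I)^{-1}=LR_z(L)$ carries an extra smoothing factor $L$ compared with $zR_z(L)$, converting the regularity-losing map $\dH^{2\delta}\to\dH^{\alpha-1}$ into the regularity-gaining map $\dH^{2\delta}\to\dH^{\alpha+1}$, for which your $|z|^{-1+b-a}$ estimate is legitimate; this is where the positive powers $|z|^{(1+\alpha)/2-\delta}$ and $|z|^{(1+(2\tilde\alpha-\alpha))/2+s}$ come from, and the hypotheses $\delta\le(1+\alpha)/2$ and $2\tilde\alpha\le\alpha+\min(\alpha,1-2s)$ are precisely what keeps these exponents in $[0,1]$. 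Relatedly, to obtain the stated power for every admissible $\tilde\alpha$ (not just $\tilde\alpha=\alpha^*$), Proposition~\ref{prop:T_error} must be applied with the intermediate space $\dH^{1-(2\tilde\alpha-\alpha)}$ rather than with $\dH^{2s}$, which is the flexibility your fixed choice of intermediate space forgoes.
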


We are now in a position to prove the error estimate for the semi-discrete approximation in space.
Before doing so,
for $s \in [0,1/2]$ and $0<\epsilon \ll 1$, we set
\begin{equation}\label{e:astar}
\alpha^* :=  \alpha/ 2+ \min\{\alpha/2,1/2-s,\spow+\delta-s-\alpha/2-\epsilon/2\}.
\end{equation}
We assume that
\beq\label{delta}
\delta  \geq \max\{0,s-\beta+\epsilon/2\}.
\eeq
The assumption \eqref{delta} is sufficient to guarantee  that  the
solution  
$ e_{\gamma,\mu} (-t^\gamma L^\beta) $ is in $\dH^{2s+\epsilon}$ and we
have the following theorem.

\begin{theorem}[Space Discretization of $e_{\gamma,\mu}(-t^\tpow L^\spow)$]\label{l:semi-sp}
Let $0<\gamma<1$, $s\in [0,1/2]$, $\mu \in \mathbb R$ and   $\alpha^*$ be as in \eqref{e:astar}.
Assume that Assumption~\ref{regularity} holds for  $\alpha\in (0,1]$,
and that $\delta $ satisfies \eqref{delta}.
Then there exists a constant $C$ such that 
 $$
 \|e_{\gamma,\mu}(-t^\tpow L^\beta)- e_{\gamma,\mu}(-t^\tpow L_h^\beta)\pi_h \|_{\dH^{2\delta} \to \dH^{2s}}\le  D(t)h^{2\alal},
 $$
where
    \begin{equation}\label{e:cd}
     D(t):=\left \{\bal C: & \qquad\hbox{if } \delta>\alal+s, \\
     C\max(1,\ln(t^{-\gamma})):&\qquad\hbox{if } \delta=\alal+s,\\
     Ct^{-{\tpow(\alal+s-\delta)}/{\spow}}:& \qquad\hbox{if } \delta<\alal+s.\\ 
    \eal \right. 
\end{equation}
\end{theorem}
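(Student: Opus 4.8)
The plan is to start from the Dunford--Taylor representations \eqref{e:fesol-dt} for both the continuous operator $e_{\gamma,\mu}(-t^\gamma L^\beta)$ and its semi-discrete analogue $e_{\gamma,\mu}(-t^\gamma L_h^\beta)\pi_h$, so that the difference acting on $v\in\dH^{2\delta}$ becomes
\[
\bigl(e_{\gamma,\mu}(-t^\gamma L^\beta)-e_{\gamma,\mu}(-t^\gamma L_h^\beta)\pi_h\bigr)v
=\frac{1}{2\pi i}\int_\cC e_{\gamma,\mu}(-t^\gamma z^\beta)\bigl(R_z(L)-R_z(L_h)\pi_h\bigr)v\,dz.
\]
Inside the integrand I would insert $\pi_h$ to split $R_z(L)-R_z(L_h)\pi_h = (I-\pi_h)R_z(L) + (\pi_h R_z(L)-R_z(L_h)\pi_h)$. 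The second term is controlled directly by Lemma~\ref{l:residue} with the stated $\tilde\alpha$, giving a factor $|z|^{-1+\tilde\alpha+s-\delta}h^{2\tilde\alpha}$; the first term $(I-\pi_h)R_z(L)v$ is handled by the $\pi_h$-approximation estimate \eqref{pih-approx} combined with the elliptic-regularity smoothing of $R_z(L)$ (that $R_z(L)$ maps $\dH^{2\delta}$ into $\dH^{2\delta'}$ with a gain of two derivatives at cost $|z|^{-1}$ times a power of $|z|$ for the extra smoothness), which after optimizing the interpolation exponents produces the \emph{same} power $|z|^{-1+\alpha^*+s-\delta}h^{2\alpha^*}$ with $\alpha^*$ as in \eqref{e:astar} — the $\epsilon$ and the $\beta+\delta-s-\alpha/2$ term in \eqref{e:astar} come precisely from requiring the resolvent image to live in a space where \eqref{pih-approx} applies and from the constraint $\delta\ge\max\{0,s-\beta+\epsilon/2\}$ in \eqref{delta}.

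Next I would bound the scalar Mittag--Leffler factor. On the rays $\cC_1,\cC_3$ we have $\arg z=\mp\pi/4$, hence $\arg(z^\beta)=\mp\beta\pi/4$, and since $\beta<1$ the argument of $-t^\gamma z^\beta$ lies outside the sector $|\arg\zeta|<\gamma\pi/2$ in which blow-up could occur; thus the decay estimate \eqref{ml-bound-scalar} applies and gives
\[
|e_{\gamma,\mu}(-t^\gamma z^\beta)|\le \frac{C}{1+t^\gamma|z|^\beta}.
\]
On the circular arc $\cC_2$, $|z|=r_0$ is bounded, so the Mittag--Leffler factor is simply bounded by a constant and that piece contributes $Ch^{2\alpha^*}$ uniformly. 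Combining, the ray contribution is estimated by
\[
C h^{2\alpha^*}\int_{r_0}^\infty \frac{r^{-1+\alpha^*+s-\delta}}{1+t^\gamma r^\beta}\,dr,
\]
and the whole theorem reduces to analyzing this one-parameter integral.

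The main obstacle — and the source of the three-case structure of $D(t)$ in \eqref{e:cd} — is precisely the evaluation of that integral. Substituting $\rho = t^{\gamma/\beta} r$ turns it into $C t^{-\gamma(\alpha^*+s-\delta)/\beta}h^{2\alpha^*}\int_{t^{\gamma/\beta}r_0}^\infty \rho^{-1+\alpha^*+s-\delta}(1+\rho^\beta)^{-1}\,d\rho$. The integrand behaves like $\rho^{-1+\alpha^*+s-\delta}$ near the origin and like $\rho^{-1+\alpha^*+s-\delta-\beta}$ at infinity. When $\delta>\alpha^*+s$ the exponent $-1+\alpha^*+s-\delta$ is $<-1$, so the integral converges at $0$ independently of $t$ (the lower limit can be pushed to $0$) and the prefactor $t^{-\gamma(\alpha^*+s-\delta)/\beta}$ is a \emph{positive} power of $t$, but one must check it stays bounded on $[0,T]$ — actually here one keeps the original variable $r$, sees the integral converges at $r_0$ uniformly in $t$, and gets the constant bound. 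When $\delta<\alpha^*+s$ the integral over $\rho\in(0,\infty)$ converges at both ends (using $\beta>0$ at infinity, which needs $\alpha^*+s-\delta<\beta$ — this is guaranteed by the $\beta+\delta-s-\alpha/2-\epsilon/2$ term in the definition of $\alpha^*$), and one is left with exactly the singular prefactor $Ct^{-\gamma(\alpha^*+s-\delta)/\beta}$. The borderline case $\delta=\alpha^*+s$ makes the integral $\int_{r_0}^\infty r^{-1}(1+t^\gamma r^\beta)^{-1}dr$ which is logarithmically divergent if the cutoff $t^\gamma r^\beta$ were absent; splitting at $r\sim t^{-\gamma/\beta}$ produces the $\ln(t^{-\gamma})$ factor (up to a $\beta$-dependent constant absorbed into $C$), and the $\max(1,\cdot)$ accounts for $t$ bounded away from $0$. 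Care must also be taken that the constants from Lemma~\ref{l:residue} and \eqref{pih-approx} are uniform in $z\in\cC$ and that the choice $\tilde\alpha=\alpha^*$ is admissible, i.e. $2\alpha^*\in(0,\alpha+\min(\alpha,1-2s)]$, which follows by inspection of \eqref{e:astar} since each term in the $\min$ is $\le\alpha/2$ in the relevant regime.
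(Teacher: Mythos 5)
Your treatment of the term $\pi_h R_z(L)-R_z(L_h)\pi_h$ via Lemma~\ref{l:residue}, the bound on the Mittag--Leffler factor along $\cC$, and the three-case analysis of $\int_{r_0}^\infty r^{-1+\alal+s-\delta}(1+t^\tpow r^\spow)^{-1}\,dr$ all match the paper's argument. The gap is in your handling of the other piece, $(I-\pi_h)R_z(L)v$, \emph{inside} the contour integral. You claim that combining \eqref{pih-approx} with resolvent smoothing yields the same weight $|z|^{-1+\alal+s-\delta}h^{2\alal}$ for this term, but that exponent is unattainable when $\delta>\alal+s$: writing
\begin{equation*}
\|R_z(L)v\|^2_{\dH^{2(\alal+s)}}=\sum_j \lambda_j^{2\delta}\Bigl(\lambda_j^{\alal+s-\delta}|z-\lambda_j|^{-1}\Bigr)^2|c_j|^2 ,
\end{equation*}
the bracket is bounded by $C\max(|z|,\lambda_j)^{\alal+s-\delta-1}\le C|z|^{-1+\alal+s-\delta}$ only when $\alal+s-\delta\ge 0$. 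When $\alal+s-\delta<0$ the supremum over $\lambda_j\ge\lambda_1$ is attained at $\lambda_1$ and gives only $C|z|^{-1}$ --- a resolvent of an operator with spectrum bounded below cannot decay faster than $|z|^{-1}$ along the rays, and mapping into the \emph{lower}-order space $\dH^{2(\alal+s)}\supset\dH^{2\delta}$ buys no extra decay in $z$. With only $|z|^{-1}$, the integral $\int_{r_0}^\infty r^{-1}(1+t^\tpow r^\spow)^{-1}\,dr$ produces a factor $\max(1,\ln(t^{-\gamma}))$, so your argument proves the case $\delta>\alal+s$ only up to a spurious logarithm, i.e.\ it does not reach $D(t)=C$.

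The paper avoids this by splitting \emph{outside} the integral:
\begin{equation*}
\|(E^{\gamma,\mu}(t)-E^{\gamma,\mu}_h(t)\pi_h)v\|_{\dH^{2s}}\le \|(I-\pi_h)E^{\gamma,\mu}(t)v\|_{\dH^{2s}}+\|\pi_h(E^{\gamma,\mu}(t)-E^{\gamma,\mu}_h(t)\pi_h)v\|_{\dH^{2s}}.
\end{equation*}
The second term is exactly your contour-integral argument with Lemma~\ref{l:residue} and needs no $(I-\pi_h)$ piece. For the first term, $(I-\pi_h)$ is applied to the already-assembled operator $E^{\gamma,\mu}(t)v$, whose $\dH^{2(\alal+s)}$-norm is estimated directly from the spectral sum using $|\egm(-t^\tpow\lambda_j^\spow)|\le C/(1+t^\tpow\lambda_j^\spow)$; there the factor $\lambda_j^{\alal+s-\delta}$ with negative exponent is simply bounded by $\lambda_1^{\alal+s-\delta}$, no $z$-integration occurs, and the constant bound $D(t)=C$ for $\delta>\alal+s$ follows. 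You should restructure your proof along these lines (or supply a separate argument for the regime $\delta>\alal+s$); the remaining steps of your proposal then go through as written.
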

\begin{proof}
Without loss of generality we assume that $2\delta \leq 1+\alal$ as the case $2\delta >  1+\alal$ follows from the continuous embedding
$$
\dH^{2\delta}\subset \dH^{1+\alal}.
$$
Also, we use the notation $E^{\gamma,\mu}(t) := e_{\gamma,\mu}(-t^\tpow L^\beta)$, $E^{\gamma,\mu}_h(t):=e_{\gamma,\mu}(-t^\tpow L_h^\beta)$ and decompose the error in two terms:
\begin{equation}\label{e:decomp}
\begin{split}
\|(E^{\gamma,\mu}(t)-E^{\gamma,\mu}_h(t)\pi_h)v\|_{\dH^{2s}}
& \le \|(I-\pi_h)E^{\gamma,\mu}(t)v\|_{\dH^{2s}}\\
& + 
\|\pi_h (E^{\gamma,\mu}(t)-E^{\gamma,\mu}_h(t)\pi_h)v\|_{\dH^{2s}}.
\end{split}
\end{equation}

$\boxed{1}$ For the first term on the right hand side above, we note that the assumptions on the parameters imply that $\alal+s\le (\alpha+1)/2\le 1$
and so the approximation property \eqref{pih-approx} of $\pi_h$ yields
\begin{equation}\label{e:thm_proj}
\|(I-\pi_h)E^{\gamma,\mu}(t)v\|_{\dH^{2s}}\leq Ch^{2\alal}\|E^{\gamma,\mu}(t) v\|_{\dot{H}^{2(\alal+s)}} .
\end{equation}
We estimate $\|E^{\gamma,\mu}(t)v\|_{\dot{H}^{2(\alal+s)}}$ by expanding $v$ in Fourier series with respect to the eigenfunctions of $L$ (see Section~\ref{ss:dotted})
and denote by  $c_j:=(v,{\psi}_j)$ the Fourier coefficient of $v$ so that
$$
E^{\gamma,\mu}(t)v = \sum_{j=1}^{\infty}e_{\gamma,\mu}(-t^\tpow\lambda_j^\spow) c_j \psi_j.
$$

 Two cases need to be considered:

Case 1: $\delta\ge\alal+s$. Here, the regularity of the initial condition is large enough to directly use the bound $|\egm(-t^\gamma \lambda_j^\beta)|\le C$  deduced from  \eqref{ml-bound-scalar} to get
$$\bal
\|E^{\gamma,\mu}(t)v\|^2_{\dH^{2\alal+2s}} &=\sum_{j=1}^{\infty}\lambda_j^{2\alal+2s}|e_{\gamma,\mu}(-t^\tpow\lambda_j^\spow)|^2 |c_j|^2\\
&\le C\lambda_1^{2(\alal+s-\delta)}\sum_{j=1}^{\infty}\lambda_j^{2\delta}|c_j|^2=C\lambda_1^{2(\alal+s-\delta)}
\|v\|_{\dot{H}^{2\delta}}^2.
\eal$$

Case 2: $\delta < \alal+s$. In this case, we need to rely on the parabolic regularity for $t>0$. 
We apply
\eqref{ml-bound-scalar} again and obtain
$$\bal
\|E^{\gamma,\mu}(t)v\|_{\dH^{2\alal+2s}}^2 
&=t^{-2\tpow(\alal+s-\delta)/{\spow}}\sum_{j=1}^\infty
\lambda_j^{2\delta}
\left| (t^\tpow\lambda_j^\spow)^{(\alal+s-\delta)/\spow}e_{\gamma,\mu}(-t^\tpow \lambda_j^\spow)\right|^2 
|c_j|^2\\
&\leq C t^{-2\tpow(\alal+s-\delta)/{\spow}}\sum_{j=1}^\infty
\lambda_j^{2\delta}
\left|\frac{(t^\tpow\lambda_j^\spow)^{(\alal+s-\delta)/\spow}}{1+t^\tpow\lambda_j^\spow}\right|^2 
|c_j|^2.
\eal
$$
Noting that $0<\alal+s-\delta<\spow$, a Young's inequality implies
$$
\left|\frac{(t^\tpow\lambda_j^\spow)^{(\alal+s-\delta)/\spow}}{1+t^\tpow\lambda_j^\spow}\right|\le 1.
$$
Whence, 
$$
\|E^{\gamma,\mu}(t)v\|_{\dH^{2\alal+2s}}^2 
\leq
Ct^{-2\tpow(\alal+s-\delta)/{\spow}}
\|v\|^2_{\dot{H}^{2\delta}}.
$$
Returning to \eqref{e:thm_proj} after gathering the estimates obtained for the two different cases, we obtain
\begin{equation}\label{e:thm_proj2}
\|(I-\pi_h)E^{\gamma,\mu}(t)v\|_{\dH^{2s}}\leq D(t) h^{2\alal}\|v\|_{\dot{H}^{2\delta}}.
\end{equation}

$\boxed{2}$
We return to \eqref{e:decomp} and estimate now  $\|\pi_h (E(t)-E_h(t)\pi_h)v\|_{\dH^{2s}}$. This time we use the integral representations and the resolvant approximation (Lemma~\ref{l:residue}) to get
$$
\bal
\|\pi_h(E^{\gamma,\mu}(t)-E^{\gamma,\mu}_h(t)\pi_h)v\|_{\dH^{2s}}&\le C\int_{\cC}|e_{\gamma,\mu}(-t^\tpow z^\spow)| |(\pi_h R_z(L)-R_z(L_h)\pi_h)v\|_{\dH^{2s}} \, d|z|\\
&\le Ch^{2\alal} \| v \|_{\dH^{2\delta}}\int_{\cC}|e_{\gamma,\mu}(-t^\tpow z^\spow)| |z|^{-1+\alal+s-\delta}  \, d|z|.
\eal
$$
Furthermore, the decay estimate \eqref{ml-bound-scalar} of the Mittag-Leffler function evaluated at $-t^\tpow z^\spow$ for $z \in \cC$ yields
\begin{equation}\label{e:thm2}
\|\pi_h(E^{\gamma,\mu}(t)-E^{\gamma,\mu}_h(t)\pi_h)v\|_{\dH^{2s}} \le Ch^{2\alal} \| v \|_{\dH^{2\delta}} \int_{\cC}\frac{ |z|^{-1+\alal+s-\delta}}{1+t^\tpow |z|^\spow} \, d|z|.
\end{equation}

$\boxed{3}$ To prove 
\begin{equation}\label{e:thm3}
\|\pi_h(E^{\gamma,\mu}(t)-E^{\gamma,\mu}_h(t)\pi_h)v\|_{\dH^{2s}} \le D(t) h^{2\alal} \| v \|_{\dH^{2\delta}},
\end{equation}
it remains to show that
\begin{equation}\label{e:to_show_D}
\int_{\cC}\frac{ |z|^{-1+\alal+s-\delta}}{1+t^\tpow |z|^\spow} \, d|z| \leq D(t)
\end{equation}
This is done separately on each part of the contour $\cC$, see \eqref{e:contour}. 
On $\cC_2$, $|z|= r_0$ so that we directly have 
$$
\int_{\cC_2}\frac{|z|^{-1+\alal+s-\delta}}{1+ t^\tpow |z|^\spow}  \, d|z|
\le \int_{\cC_2}|z|^{-1+\alal+s-\delta} \, d|z|\le C.
$$
On $\cC_1\cup\cC_3$, we use the parametrization $z(r) = re^{{\pm}i\pi/4}$ to write
$$
\int_{\cC_1\cup\cC_3}\frac{|z|^{-1+\alal+s-\delta}}{1+ t^\tpow |z|^\spow}\, d|z|
=2\int_{r_0}^\infty \frac{r^{-1+\alal+s-\delta}}{1+ t^\tpow r^\spow}\, dr .$$
When $\delta>\alal+s$, we have enough decay to directly obtain
$$
\int_{\cC_1\cup\cC_3}\frac{|z|^{-1+\alal+s-\delta}}{1+ t^\tpow |z|^\spow}\, d|z| \le 2\int_{r_0}^\infty r^{-1+\alal+s-\delta}\, dr\le C .
$$
When $\delta \leq \alal + s$, we perform the change of variable
$y:=t^\tpow |z|^\spow$ and obtain
\beq\label{I-bound}
\int_{\cC_1\cup\cC_3}\frac{|z|^{-1+\alal+s-\delta}}{1+ t^\tpow |z|^\spow}\, d|z| = \frac 2 \beta t^{-\frac{\tpow(\alal+s-\delta)}{\spow}}\int_{t^\tpow r_0^\spow}^\infty
\frac{y^{{(\alal+s-\delta)}/{\spow}-1}}{1+y}\, dy. 
\eeq
Thus,
\begin{equation*}
\begin{split}
&\int_{\cC_1\cup\cC_3}\frac{|z|^{-1+\alal+s-\delta}}{1+ t^\tpow |z|^\spow}\, d|z| \\
& \qquad \le  \frac 2 \beta
t^{-{\tpow(\alal+s-\delta)}/{\spow}}\left(\int_
{t^\tpow r_0^\spow}^1
y^{\frac{\alal+s-\delta}{\spow}-1}\, dy+\int_1^\infty y^{\frac{\alal+s-\delta}{\spow}-2}\, dy\right) \\
& \qquad \le C \left\lbrace
\begin{array}{ll}
t^{-{\tpow(\alal+s-\delta)}/{\spow}}, &\quad \textrm{when }\delta < \alal +s, \\
\max(1,\ln(t^{-\gamma})), & \quad \textrm{when } \delta = \alal+s.
\end{array}
\right.
\end{split}
\end{equation*}
$\boxed{4}$ Gathering the estimates for each part of the contour yields \eqref{e:to_show_D} and thus \eqref{e:thm3}, which, combined with \eqref{e:thm_proj2}, yields the desired result.
\end{proof}

%%%%%%%%%%%%%%%%%%%%%%%%%%%%%%%%%%%%%%%%%%%%%%%%%%
\section{Approximation of the Homogeneous Problem}\label{s:h}

This section presents and analyzes the proposed approximation algorithm in the case $f=0$. 
We note that the bound for the finite element approximation  for the
space discretization 
error is contained in Theorem~\ref{l:semi-sp}.  In this section, we
define a sinc quadrature approximation to $E_h(t)$ and analyze the
resulting quadrature error.

\subsection{The Sinc Quadrature Approximation}\label{sinc}

We discuss the approximation of the contour integral 
 in  
$$
u_h(t) = e_{\gamma,1}(-t^\gamma L_h^\beta) \pi_h v = \frac 1 {2\pi i} \int_{\cC} e_{\gamma,1}(-t^\gamma z^\beta) R_z(L_h) \pi_h v \,dz.
$$
The first step involves replacing the  contour $\cC$  by one  more
suitable for application of the  sinc quadrature technique.
For $y\in \CC$, we set 
\beq 
z(y) =b(\cosh{ y}+i\sinh{ y})
\label{hc}
\eeq
and, for $0<b<\lambda_1/\sqrt{2}$,  
consider the hyperbolic contour $\hc:=\{z(y)\ : \ y\in \RR\}$.
Using this contour, we have
$$
e_{\gamma,1}(-t^\gamma L_h^\beta) g_h = \frac{1}{2\pi i} \int_{-\infty}^\infty e_{\tpow,1}({-t^\tpow z(y)^\spow}) z'(y)[(z(y)I-L_h)^{-1}g_h] \, dy. \quad\text{for } g\in \vh .
$$
Given a positive integer $N$ and a quadrature spacing $k>0$, we set $y_j := j k$ for $j = -N,...,N$ and define the sinc quadrature approximation of $e_{\gamma,1}(-t^\gamma L_h^\beta) g_h$ by
\beq\label{e:sincapp}
	Q_{h,k}^N(t)g_h:=\frac{k}{2\pi i}\sum_{j=-N}^{N} e_{\tpow,1}(-t^\tpow z(y_j)^\spow)z'(y_j)
[(z(y_j)I-L_h)^{-1}g_h] .
\eeq

\subsection{Quadrature Error}
We now discuss the quadrature error. 
Expanding $(E_h(t)-Q_{h,k}^N(t))g_h$ in term of the discrete eigenfunction $\{\psi_{j,h}\}_{j=1}^{M_h}$ (see Section~\ref{ss:discrete_op}), for $s>0$ we have
\begin{equation}\label{errorl}
\bal
\| (E_h(t)-Q_{h,k}^N(t))g_h\|^2_{\dH_h^{2s}} &= (2\pi )^{-2}\sum_{j=1}^\NUMDOF \lambda_{j,h}^{2s} |\cE(\lambda_{j,h},t)|^2
|(g_h,\psi_{j,h})|^2\\ & \le 
(2\pi)^{-2} \|g_h\|^2_{\dH^{2s}_h} \max_{j=1,\ldots,\NUMDOF} |\cE(\lambda_{j,h},t)|^2,
\eal
\end{equation}
where 
\beq\label{e:quaderr}
\cE({\lambda},t):=  \int_{-\infty}^{\infty} g_{\lambda}(y,t)\, dy -
 k\sum_{j=-N}^{N}g_{\lambda}(j k,t)
 \eeq
and
\begin{equation}\label{eq:lamdafun}
  g_\lambda(y,t):=  
e_{\tpow,1}(-t^\tpow z(y)^\spow)z'(y)
(z(y)-\lambda)^{-1}.
\end{equation}
The function $g_\lambda(y,t)$ is well defined for $t>0$, $\lambda \geq \lambda_1$, $y\in \CC$ with $z(y)\neq
\lambda$ and $z(y)$ not on the branch cut for the logarithm.   

Following \cite{LB92}, we show that when $k=c/\sqrt{N}$ for some constant $c$, the quantity $\cE(\lambda,t) \to 0$ when $k \to 0$ uniformly with respect to $\lambda \geq\lambda_1$.
Moreover, the convergence rate is $O(\exp{(-c\sqrt{N})})$. 
We then use this estimate in \eqref{errorl} to deduce exponential rate of convergence for the sinc quadrature scheme \eqref{e:sincapp}.

This program requires additional notations and we start with the class of functions $S(B_d)$.
\begin{definition}\label{class_SB}
Given $d>0$, we define the space $S(B_d)$ to be the set of functions $f$ defined on $\RR$ 
having the following properties:
\begin{enumerate}[(i)]
 \item $f$ extends to an analytic function in the infinite strip
 $$
 B_d:=\left\{z \in \mathbb C : \ \Im(z)< d\right\} 
 $$
 and is continuous on $\overline{B_d}$.
\item There exists a constant $C$ independent of $y\in \RR$ such that 
$$
\int_{-d}^{d} |f(y+iw)|\, dw\leq C;% \| \eta \|_{\widetilde H^\spow(D)} \|\widetilde \theta\|_{H^\spow(D)};
$$
\item We have 
$$N(B_d):=\int_{-\infty}^\infty \left(|f(y+id)|+|f(y-id)| \right) dy < \infty .$$
\end{enumerate}
\end{definition}
Note that condition $(ii)$ is more restrictive than actually needed (see Definition 2.12 in \cite{LB92}) but sufficient for our considerations.
In addition, For $f\in S(B_d)$,  Theorem 2.20 in \cite{LB92} provides the
error estimate for the quadrature approximation to $\int_{\mathbb R} f(x)\,dx$ using
an infinite number of equally spaced quadrature points with spacing   $k>0$:
\beq\label{infbound}
	\left|\int_{-\infty}^\infty  f(x)\, dx- k\sum_{j=-\infty}^{\infty}f(jk)\right|\le \frac{N(B_d)}{2\sinh(\pi d/k)}e^{-\pi d/k}.
\eeq

The lemma below is proved in Appendix~\ref{a:lemma2} and is the first
step in estimating the sinc quadrature error.

\begin{lemma}\label{l:contour-esti-i}
Let $\lambda \geq \lambda_1$ and $t>0$. The function $w \mapsto g_\lambda(w,t)$ belongs to $S(B_{d})$ for $0<d<\pi/4$.
Moreover, there exists a constant $C$ only depending on $\spow$, $d$ and $b$ such that
\beq\label{nd-bound}
\bal
	N(B_d)\le C(\spow,d,b)t^{-\tpow}.
\eal
\eeq
\end{lemma}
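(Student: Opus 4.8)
The plan is to verify the three defining properties of $S(B_d)$ in Definition~\ref{class_SB} for the function $w\mapsto g_\lambda(w,t)$ and, along the way, extract the quantitative bound \eqref{nd-bound} on $N(B_d)$. The starting point is the explicit form \eqref{eq:lamdafun}, $g_\lambda(w,t)=e_{\tpow,1}(-t^\tpow z(w)^\spow)z'(w)(z(w)-\lambda)^{-1}$, with $z(w)=b(\cosh w+i\sinh w)$ and $z'(w)=b(\sinh w+i\cosh w)$. First I would record what happens to $z$ under the complex shift $w=y+iv$ with $|v|<\pi/4$: writing $\cosh(y+iv)=\cosh y\cos v+i\sinh y\sin v$ and similarly for $\sinh$, one gets $z(y+iv)=b e^{iv}\,(\text{something})$ — more precisely $z(y+iv)=b\bigl(\cosh(y+iv)+i\sinh(y+iv)\bigr)$ lies, for $|v|<\pi/4$, strictly inside the sector $|\arg \zeta|\le \pi/4+|v|<\pi/2$ and stays bounded away from $0$ (indeed $|z(y+iv)|\ge b\cos v \gtrsim b$, and $|z(y+iv)|\asymp b e^{|y|}$ as $|y|\to\infty$). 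Consequently $z(y+iv)$ never meets the branch cut $(-\infty,0]$ of the logarithm, so $z^\spow$ and hence $g_\lambda$ is analytic in the strip $B_d=\{\Im w<d\}$ for any $d<\pi/4$; continuity on $\overline{B_d}$ is clear since the sector estimates are uniform up to $v=\pm d<\pi/4$. That settles property (i), and it also shows $z(y+iv)-\lambda$ cannot vanish: $z(y+iv)$ has positive real part bounded below, while if it ever approached the real axis it would be a small positive number, and for it to equal $\lambda\ge\lambda_1$ would force $|z|$ large, forcing $|\arg z|$ small — a more careful look shows $|z(y+iv)-\lambda|\ge c(\,|z(y+iv)|+\lambda\,)$ uniformly (this is the same kind of sectorial-separation estimate used for the original contour $\cC$ and for the choice $b<\lambda_1/\sqrt2$), and in particular $|z(y+iv)-\lambda|^{-1}\le C|z(y+iv)|^{-1}$.

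Next I would bound the integrand. Combining $|z'(y+iv)|\le C|z(y+iv)|$ with the resolvent-type bound just mentioned gives $|z'(y+iv)(z(y+iv)-\lambda)^{-1}|\le C$, so the whole size of $g_\lambda$ is carried by the Mittag-Leffler factor. Since $z(y+iv)$ sits in a sector strictly inside $|\arg\zeta|<\pi/2$, the argument of $z(y+iv)^\spow$ lies in $(-\spow\pi/2-\spow d,\spow\pi/2+\spow d)$, hence $-t^\tpow z(y+iv)^\spow$ has argument within $\zeta_0:=\pi-\spow(\pi/2+d)$ of $\pm\pi$; choosing $d<\pi/4$ keeps this in the range where the decay estimate \eqref{ml-bound-scalar} applies (with $\zeta:=\spow\pi/2+\spow d$, which satisfies $\spow\pi/2<\zeta<\spow\pi$). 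Thus $|e_{\tpow,1}(-t^\tpow z(y+iv)^\spow)|\le C/(1+t^\tpow|z(y+iv)|^\spow)$. Putting this together,
\begin{equation*}
|g_\lambda(y+iv,t)|\le \frac{C}{1+t^\tpow|z(y+iv)|^\spow}\le \frac{C}{1+t^\tpow(b\cos d)^\spow e^{\spow|y|}},
\end{equation*}
using $|z(y+iv)|\ge b\cos d\, e^{|y|}/C'$ for $|y|$ bounded below away from $0$, and boundedness on a compact $y$-interval. Property (ii): for fixed $y$, $\int_{-d}^d |g_\lambda(y+iw,t)|\,dw\le 2d\cdot C$ since the bound above is uniform in $w\in[-d,d]$ and in $y$ (here I only need the crude bound $|g_\lambda|\le C$, which already follows from the sectorial estimates without even invoking $t$). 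Property (iii) and the $t^{-\tpow}$ scaling: on the lines $\Im w=\pm d$,
\begin{equation*}
N(B_d)=\int_{-\infty}^\infty\bigl(|g_\lambda(y+id,t)|+|g_\lambda(y-id,t)|\bigr)\,dy\le 2C\int_{-\infty}^\infty\frac{dy}{1+t^\tpow(b\cos d)^\spow e^{\spow|y|}}.
\end{equation*}
Splitting at $|y|=y_*$ where $t^\tpow(b\cos d)^\spow e^{\spow y_*}=1$, i.e. $y_*=\tfrac1\spow\log\bigl(t^{-\tpow}(b\cos d)^{-\spow}\bigr)$, the region $|y|\le y_*$ contributes $O(y_*^+)$ — but in fact a cleaner route is the substitution $u=t^{\tpow/\spow}b\cos d\, e^{|y|}$ on $\{y>0\}$, giving $\int_0^\infty\frac{dy}{1+(t^{\tpow/\spow}b\cos d\,e^y)^\spow}= \int_{t^{\tpow/\spow}b\cos d}^\infty\frac{du}{u(1+u^\spow)}$, which is $\le \int_{t^{\tpow/\spow}b\cos d}^\infty u^{-1-\spow}\,du = \tfrac1\spow (t^{\tpow/\spow}b\cos d)^{-\spow}=C(\spow,d,b)\,t^{-\tpow}$ once $t^{\tpow/\spow}b\cos d\le 1$, and is bounded by an absolute constant (absorbed into $C\,t^{-\tpow}$ by adjusting constants, or handled by a similar elementary estimate) when $t$ is large. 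This yields $N(B_d)\le C(\spow,d,b)\,t^{-\tpow}$, which is \eqref{nd-bound}.

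**The main obstacle** I anticipate is the separation estimate $|z(y+iv)-\lambda|\ge c\,(|z(y+iv)|+\lambda)$ uniformly over $\lambda\ge\lambda_1$, $y\in\RR$, $|v|\le d$: one must check that the shifted hyperbola stays a definite angular distance from the positive real axis where $\lambda$ lives, and this is exactly where the hypothesis $0<b<\lambda_1/\sqrt2$ enters (it guarantees the vertex $z(0)=b$ is to the left of $\lambda_1$ with room to spare, and the $\sqrt2$ matches the $\pi/4$ asymptotic opening of the hyperbola). The bookkeeping of constants in the change of variables for $N(B_d)$ is routine once that estimate is in hand; everything else is elementary manipulation of $\cosh$, $\sinh$ and the single decay bound \eqref{ml-bound-scalar}. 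I would present the separation estimate first as a standalone inequality, then feed it mechanically into the three checks above.
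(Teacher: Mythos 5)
Your proof is correct and follows essentially the same route as the paper's: analyticity of $g_\lambda(\cdot,t)$ from the positivity of $\Re z$ on the closed strip, the uniform bound $|z'(y)(z(y)-\lambda)^{-1}|\le C$ from the separation of the shifted hyperbola from the ray $[\lambda_1,\infty)$ (which the paper isolates as parts (a)--(b) of Lemma~\ref{l:contour-esti-ii}, proved by the same near-vertex/far-field case split you sketch, using $b<\lambda_1/\sqrt2$), and the pointwise estimate $|g_\lambda(y,t)|\le C t^{-\gamma}e^{-\beta|\Re y|}$ obtained by feeding the exponential growth of $|z|$ along the strip into \eqref{ml-bound-scalar}, which integrates to $N(B_d)\le C t^{-\gamma}$. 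Two harmless slips worth noting: the correct lower bound is $|z(y+iv)|\ge \Re(z(y+iv))=\sqrt2\,b\cosh(\Re y)\sin(\pi/4-v)\ge\sqrt2\,b\,\sin(\pi/4-d)\,\cosh(\Re y)$ rather than $b\cos v$ (the latter fails as $v\uparrow\pi/4$, while the former degenerates only at the excluded endpoint $d=\pi/4$), and the admissible sector in \eqref{ml-bound-scalar} is governed by $\gamma$, not $\beta$ (one needs $|\arg(-t^\gamma z^\beta)|\ge\pi-\beta(\pi/4+d)>\pi/2>\gamma\pi/2$), but neither affects the argument or the final bound.
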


The above lemma together with the quadrature estimate \eqref{infbound}
leads to exponential decay for $\cE(\lambda,t)$ as provided in the
following lemma.

\begin{lemma}\label{l:sincquad}
Let $0<d<\pi/4$. There exists a constant $C$ only depending on $d$, $b$,
$\beta$ and $\lambda_1$ such that for $k<1$, $N>0$, $t>0$ and $\lambda
\geq \lambda_1$,  
\beq\label{quad-epsilon-bound}
|\cE(\lambda,t)|\le Ct^{-\tpow}\left(e^{-\pi d/k}+e^{-\spow Nk}\right) .
\eeq
\end{lemma}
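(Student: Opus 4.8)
The plan is to bound the sinc quadrature error $\cE(\lambda,t)$ by splitting it into a \emph{truncation error} coming from restricting the infinite sum to $|j|\le N$, and an \emph{infinite-sum quadrature error} coming from replacing the integral $\int_\RR g_\lambda(y,t)\,dy$ by the full bi-infinite sum $k\sum_{j\in\ZZ} g_\lambda(jk,t)$. Writing
\begin{equation*}
\cE(\lambda,t) = \left(\int_{-\infty}^\infty g_\lambda(y,t)\,dy - k\sum_{j=-\infty}^\infty g_\lambda(jk,t)\right) + k\sum_{|j|>N} g_\lambda(jk,t),
\end{equation*}
the first parenthesized term is handled immediately by Lemma~\ref{l:contour-esti-i}: since $y\mapsto g_\lambda(y,t)\in S(B_d)$ for any $0<d<\pi/4$ with $N(B_d)\le C(\spow,d,b)t^{-\tpow}$, the estimate \eqref{infbound} gives a bound of $\tfrac{N(B_d)}{2\sinh(\pi d/k)}e^{-\pi d/k}\le C t^{-\tpow} e^{-\pi d/k}$ for $k<1$ (using $2\sinh(\pi d/k)\ge 1$). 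This accounts for the $e^{-\pi d/k}$ term in \eqref{quad-epsilon-bound}.

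The remaining work is the tail estimate $k\sum_{|j|>N}|g_\lambda(jk,t)|$. Here I would go back to the explicit formula \eqref{eq:lamdafun}. On the real line $z(y)=b(\cosh y+i\sinh y)$, so $|z(y)|=b\sqrt{\cosh^2 y+\sinh^2 y}\asymp b e^{|y|}$ for large $|y|$, $|z'(y)|\asymp b e^{|y|}$, and $\arg(z(y)^\spow)$ stays in the sector where the Mittag-Leffler decay \eqref{ml-bound-scalar} applies (this is exactly the sort of geometric fact established in the proof of Lemma~\ref{l:contour-esti-i}, which I may cite). Thus $|e_{\tpow,1}(-t^\tpow z(y)^\spow)|\le C/(1+t^\tpow|z(y)|^\spow)$, and since $\lambda\ge\lambda_1$ while $z(y)$ lies on $\hc$ with $b<\lambda_1/\sqrt2$, one has $|z(y)-\lambda|\ge c(1+|z(y)|)$ (the contour is bounded away from the positive real axis where the spectrum lives, uniformly). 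Combining these,
\begin{equation*}
|g_\lambda(y,t)| \le \frac{C\,|z'(y)|}{(1+t^\tpow|z(y)|^\spow)(1+|z(y)|)} \le \frac{C b e^{|y|}}{t^\tpow b^\spow e^{\spow|y|}\cdot b e^{|y|}} \le C t^{-\tpow} e^{-\spow|y|}
\end{equation*}
for $|y|$ large, where I used $1+t^\tpow|z|^\spow\ge t^\tpow|z|^\spow$ and $1+|z|\ge|z|$. Then $k\sum_{|j|>N}|g_\lambda(jk,t)|\le C t^{-\tpow} k\sum_{|j|>N} e^{-\spow k|j|}$, and summing the geometric series gives $k\cdot\tfrac{2e^{-\spow k(N+1)}}{1-e^{-\spow k}}\le C e^{-\spow Nk}$ for $k<1$ (since $k/(1-e^{-\spow k})$ is bounded for $k$ in a bounded interval). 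This produces the $t^{-\tpow}e^{-\spow Nk}$ term.

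The main obstacle is making the pointwise bound $|g_\lambda(y,t)|\le Ct^{-\tpow}e^{-\spow|y|}$ rigorous with a constant that is genuinely uniform in $\lambda\ge\lambda_1$ and in $t>0$ — in particular verifying the resolvent lower bound $|z(y)-\lambda|\ge c(1+|z(y)|)$ uniformly in $\lambda$, and checking that the argument of $z(y)^\spow$ stays inside the sector $\zeta\le|\arg(\cdot)|\le\pi$ required by \eqref{ml-bound-scalar} for \emph{all} $y\in\RR$, not just large $|y|$ (the small-$|y|$ regime, where $z(y)$ is near $b$ on the positive real axis, needs $\spow\pi/2<\arg(z(y)^\spow)$, which holds precisely because $\arg z(y)\to\pi/4$ cannot happen — rather $\arg z(y)$ ranges over roughly $(-\pi/2,\pi/2)$ and one checks $\spow\cdot\tfrac\pi2<\tpow\pi$ is automatic since $\spow<1$... actually one must be slightly careful and this is where the hypothesis $b<\lambda_1/\sqrt2$ enters). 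Much of this geometric bookkeeping overlaps with what is already proved for Lemma~\ref{l:contour-esti-i}, so I would lean on that lemma and its proof in Appendix~\ref{a:lemma2} wherever possible, and only supply the genuinely new tail-summation estimate in detail.
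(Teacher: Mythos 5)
Your proposal is correct and follows essentially the same route as the paper: split $\cE(\lambda,t)$ into the bi-infinite quadrature error, controlled by Lemma~\ref{l:contour-esti-i} and \eqref{infbound}, plus the truncation tail, controlled by the pointwise decay $|g_\lambda(y,t)|\le Ct^{-\tpow}e^{-\spow|y|}$ and a geometric series. The only difference is that the paper does not re-derive that pointwise bound but cites it directly as \eqref{e:estim_app}, which is established (uniformly in $\lambda\ge\lambda_1$ and for all $y\in\overline{B_d}$, not just large $|y|$) in the appendix proof of Lemma~\ref{l:contour-esti-i} — exactly the shortcut you anticipated taking.
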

\begin{proof}
In order to derived the desired estimate, we write
$$
\cE(\lambda,t) =  \left(\int_{-\infty}^\infty  g_\lambda (x,t)\, dx -k\sum_{j=-\infty}^\infty g_\lambda(jk,t)\right)+ k\sum_{|j| \geq N+1} g_\lambda(jk,t) . 
$$
Lemma~\ref{l:contour-esti-i} guarantees that $g_\lambda(.,t) \in S(B_d)$ and so in view of \eqref{infbound}, we obtain
$$
 	\left|\int_{-\infty}^\infty  g_\lambda (x,t)\, dx- k\sum_{j=-\infty}^{\infty}g_\lambda (jk,t)\right|\le \frac{N(B_d)}{2\sinh(\pi d/k)}e^{-\pi d/k} \leq C t^{-\gamma} e^{-\pi d/k},
$$
where $C$ is the constant in \eqref{nd-bound}.
For the truncation term, we use \eqref{e:estim_app} (in the appendix) to write
$$
	k\sum_{|j|\ge N+1} |g_\lambda(jk,t) |
	\leq
	C k\sum_{|j|\ge N+1} t^{-\gamma} e^{-\beta jk},
	$$
where $C$ is a constant only depending on $d$, $b$ and $\lambda_1$. 
Next we bound the infinite sum by the integral and arrive at
 $$
	k\sum_{|j|\ge N+1} | g_\lambda(jk,t)|  \leq C t^{-\tpow}e^{-\spow Nk},
$$
where now the constant depends on $\beta$ as well.
Gathering the above estimates completes the proof.
\end{proof}

\begin{remark}[Choice of $k$ and $N$]\label{r:kN}
The optimal combination of $k$ and $N$ is obtained by balancing the two exponentials on the right hand side of \eqref{quad-epsilon-bound}.
Hence, we select $k$ and $N$ such that $\pi d/k=\spow Nk$, i.e. $k=\sqrt{\frac{\pi d}{\spow N}}$, and
the estimate on $\cE(\lambda,t)$ becomes
\begin{equation}\label{e:sinc_quad_opt}
	|\cE(\lambda,t)|\le Ct^{-\tpow}e^{-\sqrt{\pi d\spow N}}.
\end{equation}
\end{remark}

Estimates on the difference between $E_h(t)$ defined by  \eqref{e:dsol1}
and $Q_{h,k}^N$ defined by \eqref{e:sincapp} follow from
\eqref{e:sinc_quad_opt} and  \eqref{errorl} as stated in the following theorem.

\begin{theorem}\label{l:hsincquad}
Let $s\in [0,1/2]$, $d\in (0,\pi/4)$, and let $N$ be a positive integer.  Set $k=\sqrt{\frac{\pi d}{\spow N}}$.
Then there exists a constant $C$ independent of $k$, $N$, $t$ and $h$ such that for every $g_h \in \dH_h^{2s}$
\begin{equation}\label{e:hsincquad}
	\|(E_h(t)-Q_{h,k}^N(t))g_h\|_{\dH_h^{2s}}\le Ct^{-\tpow}e^{-\sqrt{\pi d\spow N}}\|g_h\|_{\dH_h^{2s}}.
\end{equation}
\end{theorem}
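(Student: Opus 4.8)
The plan is to combine the pointwise-in-$\lambda$ sinc quadrature bound already established in Lemma~\ref{l:sincquad} (and its optimized version \eqref{e:sinc_quad_opt}) with the spectral decomposition \eqref{errorl} of the error in the discrete dotted norm. The only technical point is to pass from the abstract eigenvalue bound $\lambda\ge\lambda_1$ used throughout Section~\ref{s:h} to the discrete eigenvalues $\lambda_{j,h}$, which requires knowing that $\lambda_{1,h}\ge\lambda_1$ (or at least $\lambda_{j,h}\ge c\lambda_1$ uniformly in $h$).

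First I would recall that the discrete operator $L_h$ is the Galerkin approximation of the coercive form $d(\cdot,\cdot)$ restricted to $\vh\subset\HO$, so by the minimax characterization of eigenvalues $\lambda_{1,h}=\min_{v_h\in\vh\setminus\{0\}} d(v_h,v_h)/\|v_h\|^2 \ge \min_{v\in\HO\setminus\{0\}} d(v,v)/\|v\|^2 = \lambda_1$. Hence every discrete eigenvalue $\lambda_{j,h}$, $j=1,\dots,M_h$, satisfies $\lambda_{j,h}\ge\lambda_1$, which is precisely the range of validity of Lemma~\ref{l:sincquad}. With $k=\sqrt{\pi d/(\beta N)}$ as in the hypothesis, \eqref{e:sinc_quad_opt} therefore gives
\begin{equation}\label{e:thm_pw}
\max_{j=1,\dots,M_h}|\cE(\lambda_{j,h},t)| \le C t^{-\tpow} e^{-\sqrt{\pi d\spow N}},
\end{equation}
with $C$ independent of $h$, $N$, $k$ and $t$ (it depends only on $d$, $b$, $\spow$ and $\lambda_1$, none of which involve $h$).

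Next I would insert \eqref{e:thm_pw} into the identity \eqref{errorl}. Expanding $g_h=\sum_{j=1}^{M_h}(g_h,\psi_{j,h})\psi_{j,h}$, and using that $E_h(t)-Q_{h,k}^N(t)$ acts diagonally on the discrete eigenbasis with eigenvalue $(2\pi i)^{-1}\cE(\lambda_{j,h},t)$, the $\dH_h^{2s}$-norm squared of the error is $(2\pi)^{-2}\sum_j \lambda_{j,h}^{2s}|\cE(\lambda_{j,h},t)|^2|(g_h,\psi_{j,h})|^2$, which is bounded by $(2\pi)^{-2}\bigl(\max_j|\cE(\lambda_{j,h},t)|^2\bigr)\sum_j\lambda_{j,h}^{2s}|(g_h,\psi_{j,h})|^2 = (2\pi)^{-2}\bigl(\max_j|\cE(\lambda_{j,h},t)|\bigr)^2\|g_h\|_{\dH_h^{2s}}^2$. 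Taking square roots and substituting \eqref{e:thm_pw} yields \eqref{e:hsincquad} directly, absorbing $1/(2\pi)$ into the constant $C$.

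There is essentially no obstacle here beyond the eigenvalue lower bound noted above; the theorem is a clean corollary, the real work having been done in Lemma~\ref{l:contour-esti-i} and Lemma~\ref{l:sincquad}. The one thing I would be careful to state explicitly is that $C$ in \eqref{e:hsincquad} is independent of $h$: this follows because the constant in \eqref{e:sinc_quad_opt} depends only on $d$, $b$, $\spow$ and $\lambda_1$, the equivalence constants in \eqref{ineq:H_h_H} are $h$-independent, and the reduction to $\lambda_{j,h}\ge\lambda_1$ does not introduce any $h$-dependence. I would also remark that the bound holds for all $t>0$ with the stated $t^{-\tpow}$ blow-up as $t\to 0^+$, inherited verbatim from $N(B_d)\le C(\spow,d,b)t^{-\tpow}$ in Lemma~\ref{l:contour-esti-i}.
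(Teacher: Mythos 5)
Your proposal is correct and follows essentially the same route as the paper, which presents the theorem as an immediate consequence of the spectral identity \eqref{errorl} and the optimized pointwise bound \eqref{e:sinc_quad_opt}. The one detail you add explicitly --- that $\lambda_{j,h}\ge\lambda_{1,h}\ge\lambda_1$ by the minimax principle for the conforming Galerkin approximation, so Lemma~\ref{l:sincquad} applies uniformly to all discrete eigenvalues --- is left implicit in the paper and is a worthwhile clarification, but it does not change the argument.
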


\subsection{The Total Error} 
The discrete approximation after space and quadrature discretization is 
\begin{equation}\label{e:fully}
u_{h}^N(t) := Q_{h,k}^N \pi_h v, \quad\text{with } k =\sqrt{\frac{\pi d}{\spow N}}.
\end{equation}

Gathering the space and quadrature error estimates, we obtain the final estimate for the approximation of the homogeneous problem.

\begin{theorem}[Total error]\label{t:hterr}  Assume that the conditions
  of Theorem~\ref{l:semi-sp} and Theorem~\ref{l:hsincquad} hold. 
Then there exists a constant $C$ independent of $h$, $t$ and $N$ such that
$$
	\|u(t)-u_{h}^N(t)\|_{\HH^{2s}}\le D(t)h^{2\alal}\|v\|_{\HH^{2\delta}}+Ct^{-\tpow}e^{-\sqrt{\pi d\spow N}}\|v\|_{\HH^{2s}},
$$
provided the initial condition $v$ is in $ \HH^{2s}\cap \HH^{2\delta}$.
Here $D(t)$ is the constant given by \eqref{e:cd}.
\end{theorem}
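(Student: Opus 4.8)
The plan is to combine the semi-discrete space error estimate of Theorem~\ref{l:semi-sp} with the sinc quadrature error estimate of Theorem~\ref{l:hsincquad} via the triangle inequality. Writing $u(t) = E(t)v = e_{\gamma,1}(-t^\gamma L^\beta)v$ (recall $f=0$ here, so $\mu=1$) and $u_h^N(t) = Q_{h,k}^N(t)\pi_h v$, I would insert the semi-discrete intermediate quantity $E_h(t)\pi_h v = e_{\gamma,1}(-t^\gamma L_h^\beta)\pi_h v$ and split
\begin{equation*}
\|u(t)-u_h^N(t)\|_{\HH^{2s}} \le \|(E(t)-E_h(t)\pi_h)v\|_{\HH^{2s}} + \|(E_h(t)-Q_{h,k}^N(t))\pi_h v\|_{\HH^{2s}}.
\end{equation*}

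For the first term, since $s\in[0,1/2]$ we have $2s\le 1$, so by Proposition~\ref{p:equiv} the space $\HH^{2s}$ coincides with $\dH^{2s}$ with equivalent norms; thus this term is bounded directly by Theorem~\ref{l:semi-sp} (with $\mu=1$) by $D(t)h^{2\alal}\|v\|_{\dH^{2\delta}} \le C\,D(t) h^{2\alal}\|v\|_{\HH^{2\delta}}$, again using norm equivalence on $\HH^{2\delta}$ (valid when $2\delta \le 1+\alpha$; the case $2\delta > 1+\alpha$ is handled as in the proof of Theorem~\ref{l:semi-sp} by the embedding $\dH^{2\delta}\subset\dH^{1+\alal}$, so only $\HH^{2\delta}$-regularity of $v$ is needed on the right). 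For the second term, I would apply Theorem~\ref{l:hsincquad} with $g_h = \pi_h v$: using again the equivalence $\HH^{2s}\cong\dH^{2s}$ on $\vh$ together with the equivalence \eqref{ineq:H_h_H} between $\dH^{2s}$ and $\dH_h^{2s}$ on $\vh$, this term is bounded by $Ct^{-\gamma}e^{-\sqrt{\pi d\beta N}}\|\pi_h v\|_{\dH_h^{2s}} \le Ct^{-\gamma}e^{-\sqrt{\pi d\beta N}}\|\pi_h v\|_{\dH^{2s}}$, and finally by the $\HH^{2s}$-stability \eqref{pih-bound} of the $L^2$-projection (combined with norm equivalence), $\|\pi_h v\|_{\dH^{2s}}\le C\|v\|_{\HH^{2s}}$.

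Adding the two bounds gives the claimed estimate. There is essentially no hard analytic obstacle here: the theorem is a straightforward assembly of the two preceding results, and the only care required is the bookkeeping of moving between the three scales of norms ($\HH^s$, $\dH^s$, $\dH_h^s$) via Proposition~\ref{p:equiv}, the equivalence \eqref{ineq:H_h_H}, and the projection stability \eqref{pih-bound}, all of which hold under the stated hypotheses ($s\in[0,1/2]$, Assumption~\ref{regularity}, and $v\in\HH^{2s}\cap\HH^{2\delta}$). The mildest subtlety worth a sentence in the write-up is to confirm that the regularity condition \eqref{delta} on $\delta$ — which is assumed in Theorem~\ref{l:semi-sp} and hence inherited here — is exactly what guarantees the right-hand side norms $\|v\|_{\HH^{2\delta}}$ and $\|v\|_{\HH^{2s}}$ are the natural ones, so no extra regularity beyond $v\in\HH^{2s}\cap\HH^{2\delta}$ is implicitly used.
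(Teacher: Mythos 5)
Your proposal is correct and follows exactly the paper's own argument: the same decomposition through the semi-discrete solution $u_h(t)=E_h(t)\pi_h v$, the same invocation of Theorem~\ref{l:semi-sp} with $\mu=1$ and of Theorem~\ref{l:hsincquad} with $g_h=\pi_h v$, and the same final bookkeeping via the norm equivalences \eqref{ineq:H_h_H}, the projection stability \eqref{pih-bound}, and Proposition~\ref{p:equiv}. Your extra remarks on the case $2\delta>1+\alpha$ and on the role of \eqref{delta} are sound but not needed beyond what the paper already records.
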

\begin{proof}
We use the decomposition
$$
u(t) - u_h^N(t) = u(t) - u_h(t) + u_h(t) - u_h^N(t)
$$
and invoke Theorem~\ref{l:semi-sp}  with  $\mu=1$ and 
Lemma~\ref{l:hsincquad} with $g_h = \pi_h v$ to arrive at
$$
	\|u(t)-u_{h}^N(t)\|_{\dH^{2s}}\le D(t)h^{2\alal}\|v\|_{\dH^{2\delta}}+Ct^{-\tpow}e^{-\sqrt{\pi d\spow N}}\|\pi_h v\|_{\dH_h^{2s}}.
$$
The equivalence of norms \eqref{ineq:H_h_H} together with  stability of the $L^2$ projection \eqref{pih-bound} and  the equivalence property between the dotted spaces and interpolation spaces \eqref{e:interpolation_spaces} (see Proposition~\ref{p:equiv}) yield the desired result.
\end{proof}

\begin{remark}[Implementation]\label{r:implementation}
Denote $\widetilde U(t)$ the vector of coefficients of $u_h^N(t)$ with respect to 
the finite element local basis functions and
$\widetilde V$ the vector of inner product between $v$ and local basis functions. Let $\widetilde A$ and $\widetilde M$
be the stiffness and mass matrices.
Then
$$
	\widetilde U(t) = \frac{k}{2\pi i}\sum_{j=-N}^N e_{\gamma,1}(-t^\gamma z(y_j)^\beta)(z(y_j)\widetilde M+\widetilde A)^{-1}\widetilde V.
$$
\end{remark}
\begin{remark}[Complexity of the Implementation]\label{r:mat-aspect}
We take advantage of the exponential decay of the sinc quadrature by setting $N=c(\alal\ln(1/h))^2$
so that 
$$
	\|u(t)-u_{h}^N(t)\|_{\dH^{2s}}\le C\max(D(t),t^{-\tpow})h^{2\alal}.
$$
Hence,  computing $u_h^N(t)$ for a fixed $t$
requires $O( \log(1/h)^2)$ complex finite element system solves. 
\end{remark}

\subsection{Numerical Illustration}\label{s:num}
In this section, we provide numerical illustrations of the rate of
convergence predicted by Theorem~\ref{l:semi-sp} and Lemma~\ref{l:hsincquad}.

\subsubsection*{Space Discretization Error} 

In order to illustrate the space discretization error, we start with a one dimensional problem and use a spectral decomposition to compute the exact solution without resorting to quadrature. 
Set $\Omega=(0,1)$ , $Lu:=-u^{\prime\prime}$.
We chose the initial condition to be
$v \equiv 1$ or, using the eigenvalues $\lambda_{\ell} = \pi^2 \ell^2$ and associated eigenfunctions $\psi_\ell(x) = \sqrt{2}\sin(\pi \ell x)$, 
$$
v = 2\sum_{\ell=1}^\infty \frac{1-(-1)^\ell}{\pi \ell}  \sin(\pi\ell x) \approx 2\sum_{\ell=1}^{50000} \frac{1-(-1)^\ell}{\pi \ell}  \sin(\pi\ell x).
$$
The number of term used before the truncation is chosen large enough not to influence the space discretization ($50000$).
With these notations, the exact solution for $\gamma=1/2$ and $0<\beta<1$ is approximated by
\begin{equation}\label{e:u_exact_1d}
u(t) \approx 2\sum_{\ell=1}^{50000} e_{1/2,1}(-t^{1/2} (\pi\ell)^\beta)  \frac{1-(-1)^\ell}{\pi \ell}  \sin(\pi \ell x).
\end{equation}

For the space discretization, we consider a sequence of uniform meshes with  mesh sizes $h_j=2^{-j}$, where $j=1,2,\dots$
and denote by  $\{\varphi_{k,h}\}_{k=1,\ldots,M_{h_j}}$ the continuous piecewise linear finite element basis of $\vh$.
The eigenvalues of $L_{h_j}$ corresponds to the eigenvalues of $M_{h_j}^{-1}S_{h_j}$, where $M_{h_j}$ and $S_{h_j}$ are the mass and stiffness matrices and are given by
$$
\lambda_{\ell,h_j}= \frac{6(1+\cos(k\pi h_j))}{h_j^2 (2+\cos(k \pi h_j))}.
$$
The associated eigenfunctions to $L_h$ are
$$
\psi_{\ell,h_j} := \sum_{k=1}^{M_{h_j}}\sqrt{2 h_j} \sin( h_j \ell k \pi) \varphi_{k,h_j}.
$$
Similar to \eqref{e:u_exact_1d}, we use the discrete spectral representation below of  $u_{h_j}(t)$ for our computation
$$
u_{h_j}(t) = \sum_{\ell=1}^{M_{h_j}} e_{1/2,1}(-t^{1/2} \lambda_{\ell,h_j}^\beta)  v_{h_j,\ell}  \psi_{\ell,h_j},
$$
with 
$$v_{\ell,h_j} = \int_0^1 \psi_{\ell,h_j}(x)\, dx =  h_j \sqrt{2h_j}\sum_{k=1}^{M_{h_j}} \sin(h_j \ell k \pi) .$$

%Note that the resulting stiffness and mass matrices can be diagonalized by
%the discrete sine transform, whose matrix representation is given by 
%$S_j=\{s^j_{kl}\}$ with $s^j_{kl}=\sqrt{2h_j}\sin{(kl\pi h)}$. 
%Moreover, the eigenvalues of stiffness and mass matrices are 
%$a^j_k=(2+2\cos{(k\pi h_j)})/h_j$ and $m^j_k=h_j(4+2\cos{(k\pi h_j)})/6$ for $k=1,2,\ldots,M_{h_j}$. The eigenvalue of $L_h$
%is $\lambda^j_k=a^j_k/m^j_k$ and hence the eigenvalue of the semi-discrete approximation $u_{h_j}(t)$ is given by 
%$\Lambda^j_k(t)=e_{\tpow,1}(-t^\tpow (\lambda^j_k)^\spow)$. Now let $V_j$ be the vector of products between the initial data $v$
%and the finite element basis functions $\psi_{k,h_j}$, i.e. the $k$th component of $V_j$ is $(v,\psi_{k,h_j})$. Then, the vector of 
%coefficients of $u_{h_j}(t)$ can represented by 
%$S_j^{-1}\Lambda_j(t) S_j V_j$,
%where $\Lambda_j(t)$ is a diagonal matrix whose diagonal entries are $\Lambda^j_k(t)$ for $k=1,\cdots,M_{h_j}$. 

Note that $\alpha$ in Assumption~\ref{regularity} is 1,  $v\in \dH^{1/2-\epsilon}$ for any $\epsilon>0$ so that $\delta=1/4-\epsilon$.
The error will be computed in $L^2$ and $H^1$, i.e. $s=0$ and $s=1/2$.
For the latter we need $\beta > 1/4$.   
The predicted convergence rates (Theorem~\ref{l:semi-sp}) are 
$$
2\alal=1+\min(1,1-2s,2(\beta+\delta-s)-1-\epsilon)
$$
for every $\epsilon>0$, i.e.
$$
\|u(t)-u_h(t)\|+h\|u(t)-u_h(t)\|_{H^1}\le D(t)h^{\min(2,2\beta+1/2)-\epsilon} \| v \|_{\dH^{1/2-\epsilon}}.
$$

We use the \textit{MATLAB} code \cite{mlcode} to evaluate $e_{\gamma,1}(z)$ for any $z\in\mathbb C$ and fix $t=0.5$.
In Figure~\ref{f:hsp}, we report the errors 
$$e_j:=\|u(t)-u_{h_j}(t)\|\quad\text{and}\quad e^1_j:=\|u'(t)-u'_{h_j}(t)\|$$
 for $j=3,4,5,6,7$ and different values of $\spow$. 
The observed rate of convergence 
$$OROC:=\frac{\ln(e_7/e_6)}{\ln{2}}\quad\text{and}\quad OROC^1:=\frac{\ln(e^1_7/e^1_6)}{\ln{2}}$$
are also reported
in this figure and match the rates predicted by  Theorem~\ref{l:semi-sp}.

\begin{figure}[hbt!]
 \begin{center}
    \begin{tabular}{cc}
\includegraphics[scale=.47]{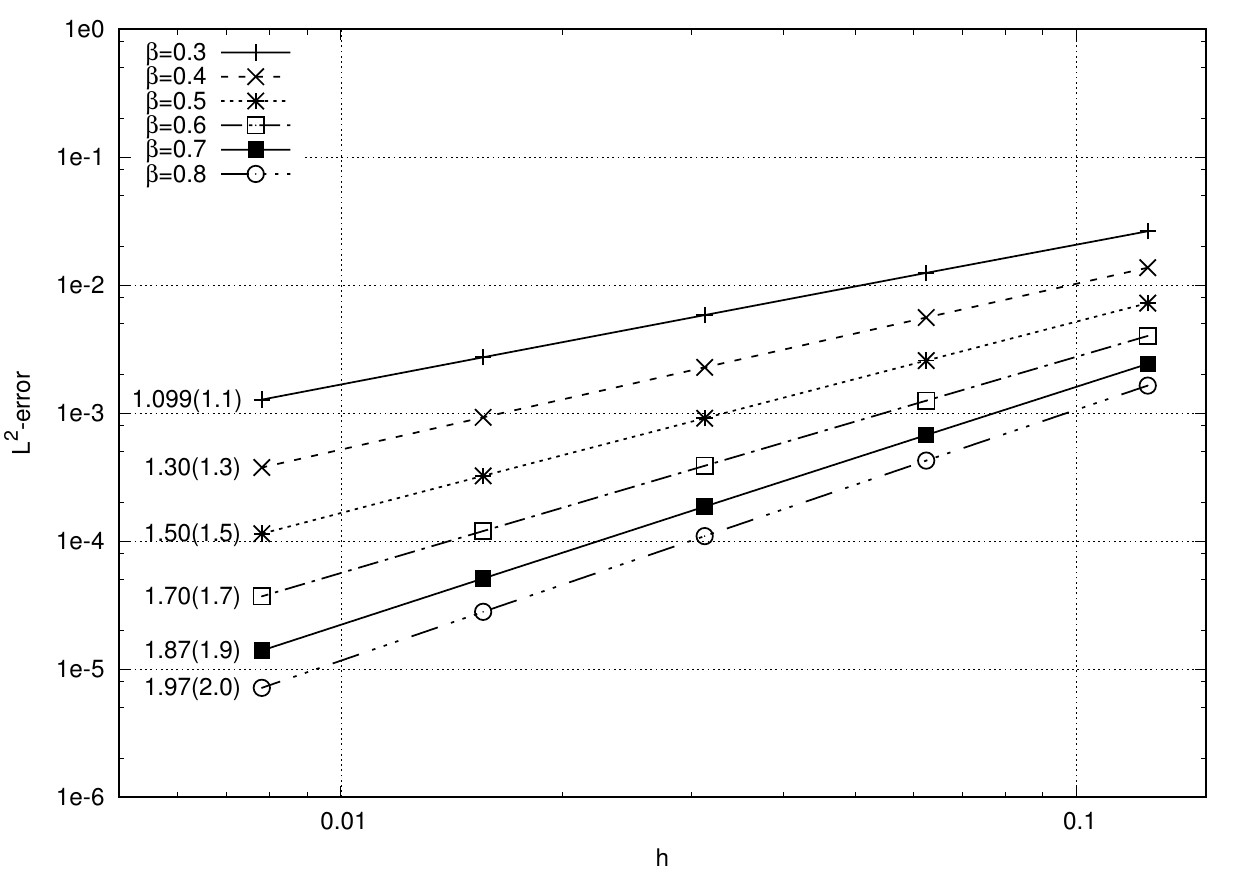} & \includegraphics[scale=.47]{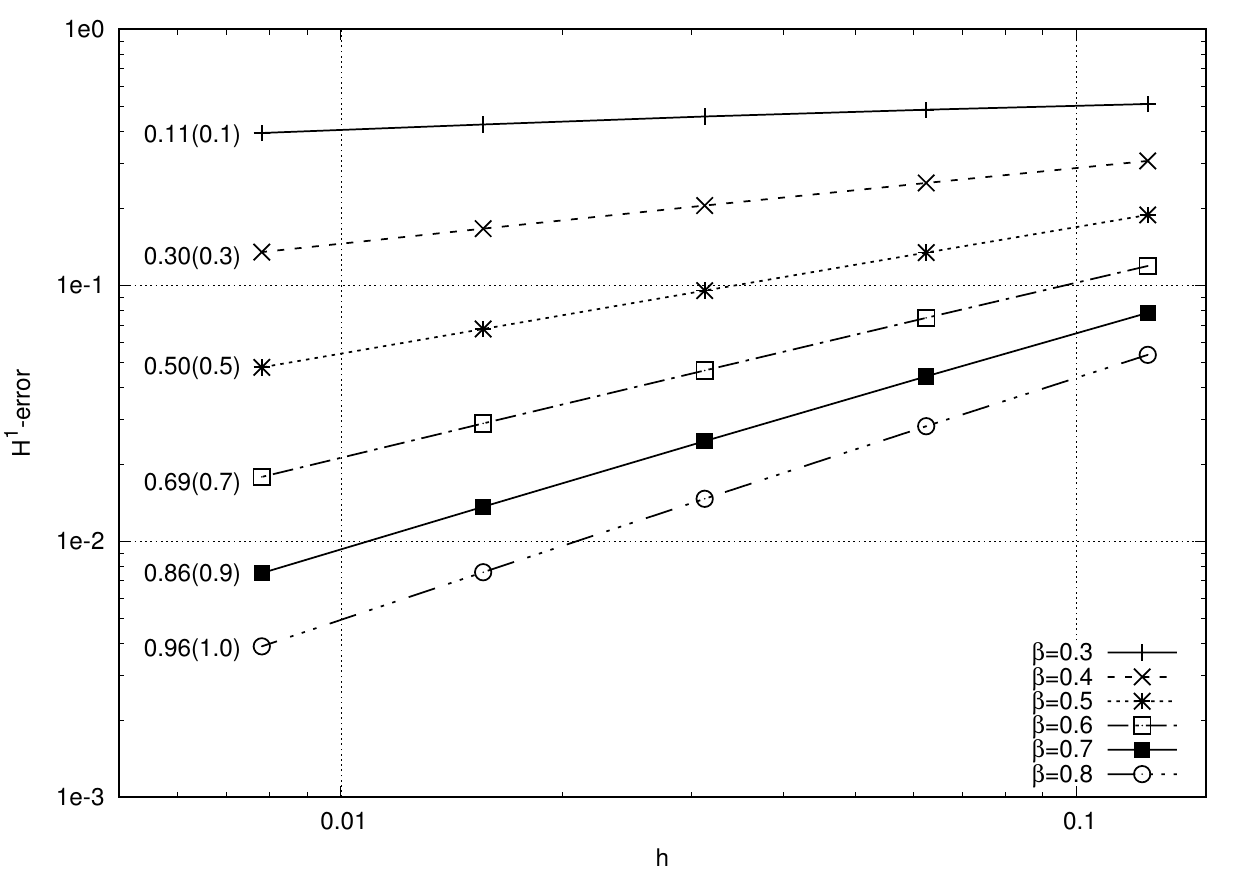}\\
    \end{tabular}
 \end{center}
    \caption{Errors $e_j$ (left) and $e^1_j$ (right) versus the mesh size $h$ for different values of $\beta$.
    The observed rate of convergence $OROC$ and $OROC^1$ are reported on
    the left of each graph and match the rate predicted by 
Theorem~\ref{l:semi-sp} shown in between  parentheses.}
    \label{f:hsp}
\end{figure}
    
\subsubsection*{Effect of the Sinc Quadrature}

We examine the error between the semi-discrete approximation and its sinc quadrature approximation.
To this end and in order to factor out the space discretization, it suffices to observe $\cE({\lambda},t)$ defined by \eqref{e:quaderr} for all 
$\lambda \geq \lambda_1$. Here we fix $t=0.5$ and approximate $\|\cE(.,t)\|_{L^\infty(\lambda_1,\infty)}$ 
with $\lambda_1=10$ using the method discussed in Section 5.2 in \cite{BLP17}.  
For the hyperbolic contour $z(y)$ in \eqref{hc}, we choose $b=1$ so that $b\in (0,\lambda_1/\sqrt{2})$.
Following Remark~\ref{r:kN}, we fix the number of quadrature points to $2N+1$ and balance the two source of errors by setting $k=\sqrt{{\pi d}/{(\spow N)}}$ with $d={\pi}/{8}$. 
According to \eqref{e:sinc_quad_opt}, we have
$$
\|\cE(\lambda,t)\|_{L^\infty(10,\infty)} \leq C t^{-\gamma} e^{-\sqrt{\pi d \beta N}}.
$$
The left graph of Figure~\ref{f:hsinc} illustrates the exponential decay of $\|\cE(\lambda,t)\|_{L^\infty(10,\infty)}$ as
$N$ increases for $\tpow=0.5$ and $\spow=0.3,0.5,0.7$. We also report (right) the singular behavior of $\|\cE(.,t)\|_{L^\infty(10,\infty)}$
in time for $N=100$, $\spow=0.5$ and $\tpow=0.3,0.5,0.7$. 

\begin{figure}[hbt!]
 \begin{center}
    \begin{tabular}{cc}
\includegraphics[scale=.31]{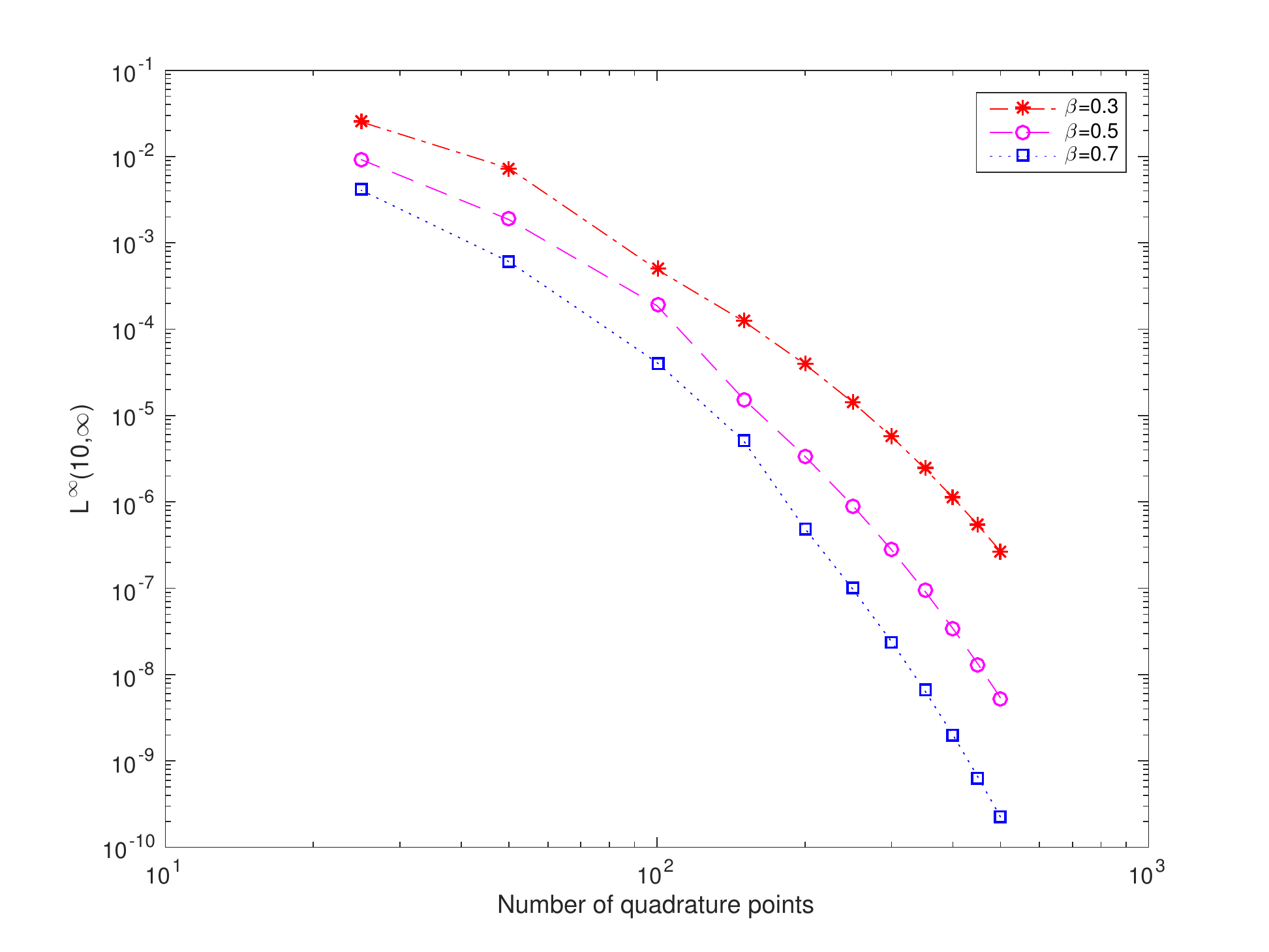} & \includegraphics[scale=.31]{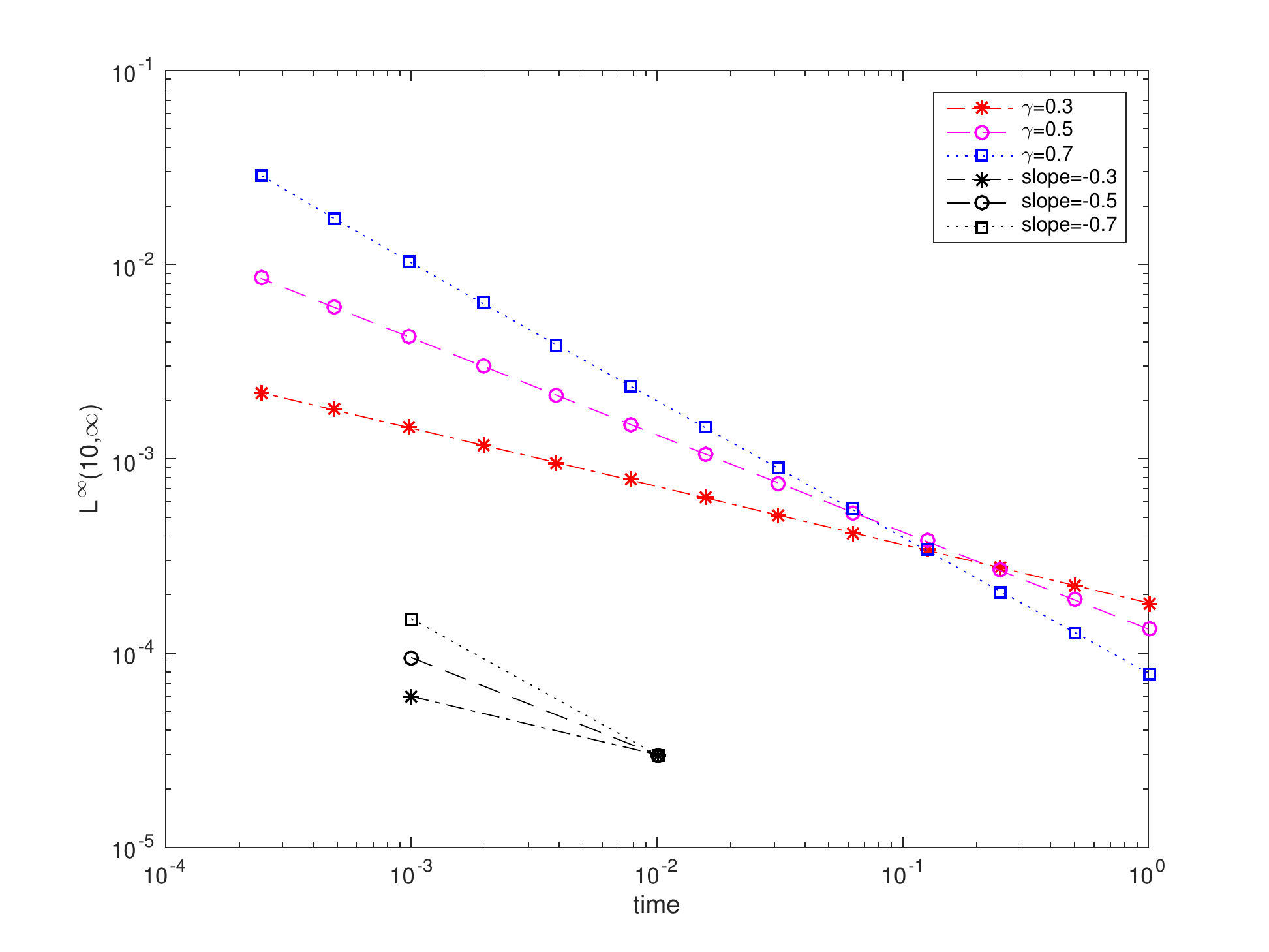}\\
    \end{tabular}
 \end{center}
    \caption{(Left) Exponential decay of $\| \cE({.},0.5)\|_{L^\infty(10,\infty)}$ versus the number of quadrature points used for different values of $\beta$.
    (Right) Singular behavior of $\| \cE(.,t)\|_{L^\infty(10,\infty)}$ as $t \to 0$ for $\beta=0.5$ and different values of $\gamma$. The rate $-\tpow$ predicted by \eqref{e:sinc_quad_opt} is observed.}
    \label{f:hsinc}
\end{figure}

\subsubsection*{A Two Dimensional Problem}
We now focus our attention to the total error in a two dimensional problem. 
Let $\Omega=(0,1)^2$, $L=-\Delta$ and the initial condition be the eigenfunction of $L$ given by 
$$
v(x_1,x_2)=\sin{(\pi x_1)}\sin(\pi x_2).
$$ 
The exact solution is then given by 
$$u(t,x_1,x_2)=e_{\tpow,1}(-t^\tpow (2\pi^2)^\spow)\sin{(\pi x_1)}\sin{(\pi x_2)} .$$ 
The space discretizations are subordinate to a sequence of uniform subdivisions made of triangles with the mesh size $h_j=2^{-j}\sqrt{2}$. 
For the quadrature, we chose $N=400$ and set $k= \sqrt{{\pi^2}/{(8\spow N)}}$ for the quadrature error not to affect the space discretization error.
Since $\lambda_1=\pi^2$, we again set $b=1$ in \eqref{hc}.
We fix $t=0.5$, $\gamma=0.5$ and report in Figure~\ref{f:2Dh}, the quantities $\|u(t)-u_{h_j}^N(t)\|$ for $j=3,4,5,6,7,8$ and different $\spow$.
As announced in Theorem~\ref{t:hterr}, a second order rate of convergence is observed.
\begin{figure}[hbt!]
\begin{center}
\includegraphics[scale=.47]{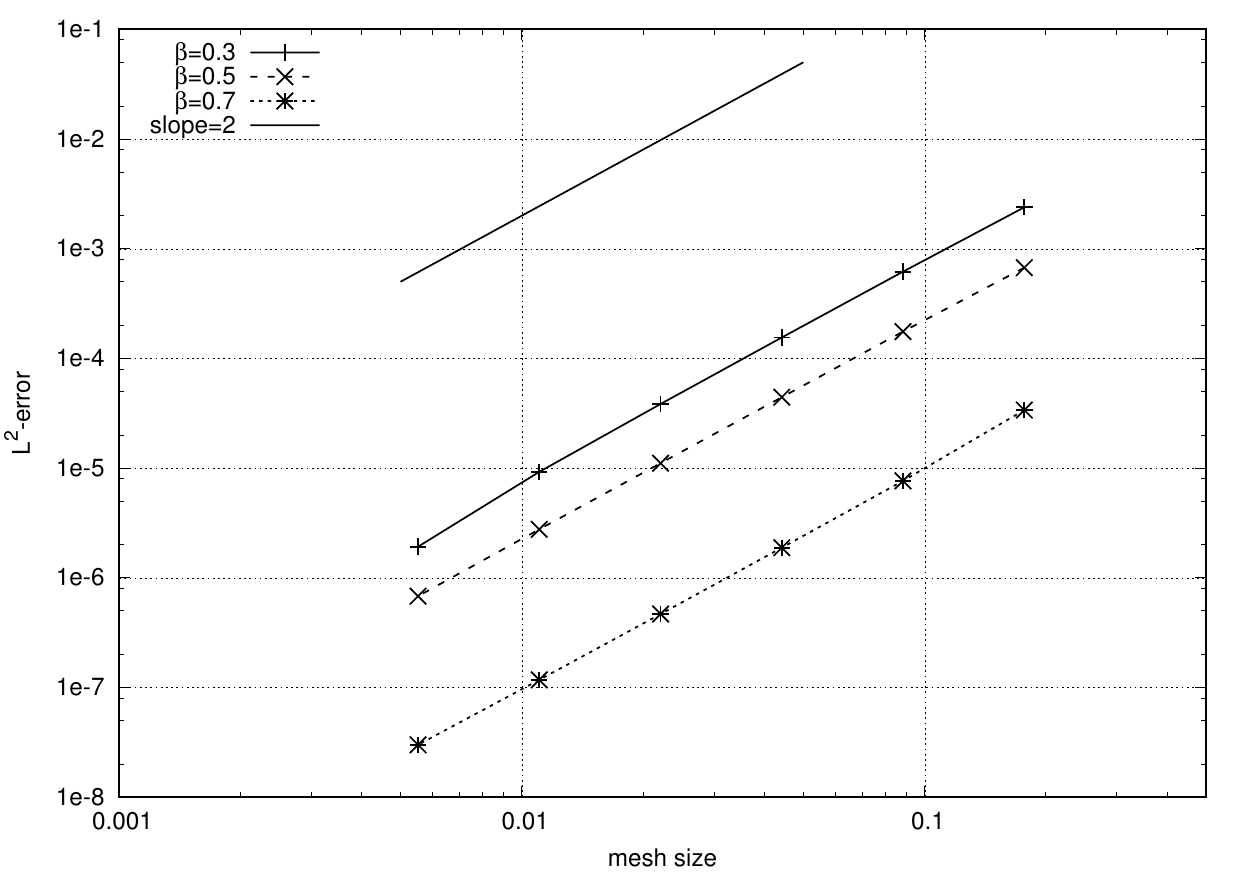} 
\end{center}
\caption{$L^2$ error between $u(0.5)$ and $u_{h_j}^N(0.5)$ with $\tpow=0.5$ and different values of $\spow$. 
A second order convergence rate is observed.}
\label{f:2Dh}
\end{figure}
%%%%%%%%%%%%%%%%%%%%%%%%%%%%%%%%%%%%%%%%%%%%%%%%%%

\section{Approximation of the Non-homogeneous Problem}\label{s:nh}

We now turn our attention to the non-homogeneous problem, i.e. $f\neq 0 $ and $v=0$ in \eqref{e:p},  for which the solution reads
\begin{equation}\label{e:exact_nh}
u(t) = \int_{0}^t \underbrace{r^{\tpow-1}e_{\tpow,\tpow}(-r^\tpow L^\spow)}_{=:W(s)} f(t-r)\, dr.
\end{equation}

\subsection{The Semi-discrete Scheme}

According to \eqref{e:dsol1}, the finite element approximation of \eqref{e:exact_nh} is given by  
\beq\label{e:discrete_nh}
	u_h(t)=\int_0^t \underbrace{r^{\gamma-1} e_{\gamma,\gamma}(-r^\gamma L_h^\spow)}_{=:W_h(r)} \pih f(t-r)\, dr.
\eeq

As in the homogeneous case, the finite element approximation error is
derived from Lemma~\ref{l:semi-sp} and we have the following lemma.
\begin{lemma}[Space Discretization for the non-homogeneous problem]\label{l:semirhs}
Assume that Assumption~\ref{regularity} holds for  $\alpha\in (0,1]$.
Let $\gamma \in (0,1)$, $s \in [0,\frac 1 2]$ and let $\alpha^*$ and $\delta$
be as in \eqref{e:astar} and \eqref{delta}, respectively.
There exists a constant $C$ such that 
 $$
 \|u(t)-u_h (t)\|_{\dH^{2s}}\le   \widetilde D(t) h^{2\alal} \|f\|_{L^{\infty}(0,t;\dot{H}^{2\delta})},
 $$
 where 
 \begin{equation}\label{e:tildeD}
 \widetilde D(t) =  C  
 \left\lbrace
\begin{array}{ll}
t^\tpow & \qquad \text{when }\delta > \alpha^*+s, \\
t^\tpow \max(1,\ln(1/t))  & \qquad \text{when }\delta = \alpha^*+s, \\
t^{\tpow-\tpow(\alal +s- \delta)/\beta} & \qquad \text{when }\delta < \alpha^*+s.
\end{array}
\right.
\end{equation}
\end{lemma}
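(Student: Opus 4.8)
The plan is to reduce the estimate for $u(t)-u_h(t)$ to the semi-discrete estimate of Theorem~\ref{l:semi-sp} applied pointwise in the convolution variable. Writing $W(r)=r^{\gamma-1}e_{\gamma,\gamma}(-r^\gamma L^\beta)$ and $W_h(r)=r^{\gamma-1}e_{\gamma,\gamma}(-r^\gamma L_h^\beta)\pi_h$, we have from \eqref{e:exact_nh} and \eqref{e:discrete_nh} that
\begin{equation*}
u(t)-u_h(t) = \int_0^t \bigl(W(r)-W_h(r)\bigr) f(t-r)\, dr,
\end{equation*}
so that
\begin{equation*}
\|u(t)-u_h(t)\|_{\dH^{2s}} \le \int_0^t r^{\gamma-1}\,\bigl\| \bigl(e_{\gamma,\gamma}(-r^\gamma L^\beta)-e_{\gamma,\gamma}(-r^\gamma L_h^\beta)\pi_h\bigr) f(t-r)\bigr\|_{\dH^{2s}}\, dr.
\end{equation*}
First I would apply Theorem~\ref{l:semi-sp} with $\mu=\gamma$ and with the running time variable $r$ in place of $t$, which gives the integrand bound $r^{\gamma-1} D(r) h^{2\alpha^*}\|f(t-r)\|_{\dH^{2\delta}}$; pulling out $h^{2\alpha^*}$ and $\|f\|_{L^\infty(0,t;\dH^{2\delta})}$ reduces everything to showing $\int_0^t r^{\gamma-1} D(r)\, dr \le \widetilde D(t)$, with $D$ given by \eqref{e:cd}.

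The remaining work is the elementary but case-dependent evaluation of $\int_0^t r^{\gamma-1} D(r)\, dr$. In the case $\delta>\alpha^*+s$, $D(r)=C$ is constant, and $\int_0^t r^{\gamma-1}\,dr = t^\gamma/\gamma$, giving the $t^\gamma$ entry. In the case $\delta<\alpha^*+s$, $D(r)=Cr^{-\gamma(\alpha^*+s-\delta)/\beta}$, so the integrand is $Cr^{\gamma-1-\gamma(\alpha^*+s-\delta)/\beta}$; the exponent $\gamma-1-\gamma(\alpha^*+s-\delta)/\beta$ exceeds $-1$ precisely because $\alpha^*+s-\delta<\beta$ (which is part of the standing assumptions underlying \eqref{e:astar}), so the integral converges and equals a constant times $t^{\gamma-\gamma(\alpha^*+s-\delta)/\beta}$, matching \eqref{e:tildeD}. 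In the borderline case $\delta=\alpha^*+s$, $D(r)=C\max(1,\ln(r^{-\gamma}))$, and I would split $\int_0^t = \int_0^{\min(1,t)} + \int_{\min(1,t)}^t$: on the first piece bound $\max(1,\ln r^{-\gamma})\le C(1+\ln(1/r))$ and use $\int_0^{a} r^{\gamma-1}(1+\ln(1/r))\,dr \le C a^\gamma(1+\ln(1/a))$ for $a\le 1$ (an integration by parts in $r^{\gamma-1}\ln(1/r)$), while on the second piece $D\le C$; combining gives the $t^\gamma\max(1,\ln(1/t))$ bound.

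The only genuine subtlety — and the step I would flag as needing care rather than being purely routine — is checking the integrability at $r=0$ in the third case and the precise form of the logarithmic factor in the borderline case; both hinge on the strict inequality $0<\alpha^*+s-\delta<\beta$, which must be traced back to the definition \eqref{e:astar} of $\alpha^*$ (where the $-\epsilon/2$ in the minimum guarantees strictness) together with the hypothesis \eqref{delta} on $\delta$. Once that is in hand, the constant $C$ absorbs $1/\gamma$, the contour constants from Theorem~\ref{l:semi-sp}, and $\lambda_1$-dependent factors, and assembling the three cases yields exactly \eqref{e:tildeD}, completing the proof.
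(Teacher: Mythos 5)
Your argument is exactly the paper's: reduce to Theorem~\ref{l:semi-sp} with $\mu=\gamma$ applied pointwise in the convolution variable, pull out $h^{2\alal}$ and $\|f\|_{L^\infty(0,t;\dH^{2\delta})}$, and then evaluate $\int_0^t r^{\gamma-1}D(r)\,dr$. The only difference is that the paper simply asserts this integral equals $\widetilde D(t)$, whereas you carry out the three-case computation (including the integrability check $\alal+s-\delta<\beta$ coming from the $-\epsilon/2$ in \eqref{e:astar}) explicitly and correctly.
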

\begin{proof}
Applying Theorem~\ref{l:semi-sp} gives
\begin{align*}
	\|u(t)-u_h(t)\|_{\dH^{2s}}&\le \int_0^t {r}^{\tpow-1} \|e_{\tpow,\tpow}(-t^\tpow L^\spow)-e_{\tpow,\tpow}(-t^\tpow L_h^\spow)\pih\|_{\dH^{2\delta}\rightarrow \dH^{2s}}
	\|f(t-{r})\|_{\dH^{2\delta}}\, d{r} \\
	&\le Ch^{2\alal}\|f\|_{L^\infty(0,t;\dH^{2\delta})} \int_0^t {r}^{\tpow-1} D({r})\, d{r},
\end{align*}
where $D(t)$ is given by \eqref{e:cd}.
The conclusion follow from  $\int_{0}^t r^{\gamma-1} D(r)\, dr =
\widetilde D(t)$.
\end{proof}

\subsection{Time Discretization via Numerical Integration}

Given a final time $\tt$, we discuss first a numerical approximation of the integral
$$
\int_0^\tt W_h(s) \pi_h f(\tt-s)\, ds.
$$
For simplicity, we set  
$$g(s)= f(\tt-s)$$
 so that the above integral
becomes
 $$
\int_0^\tt W_h(s) \pi_h g(s)\, ds.
$$
For a positive integer $\cM$, let $0=t_0<t_1<...<t_{\cM}=\tt$ be a partition of the time interval $[0,\tt]$. 
On each subinterval we set $t_{j-\frac12} = \frac 1 2 (t_j+t_{j-1})$ and
propose the pseudo-midpoint approximation
\begin{equation} \label{e:quad_int}
\begin{split}
&\int_{t_{j-1}}^{t_{j}} W_h(r) \pi_h g(r) \, dr \\
&\qquad \approx \int_{t_{j-1}}^{t_{j}} W_h(r)\, dr\,  \pi_h g(t_{j-\frac12}) \\
& \qquad \ = L_h^{-\beta} \left( e_{\gamma,1}(-t_{j-1}^\gamma L_h^\beta) - e_{\gamma,1}(-t_{j}^\gamma L_h^\beta) \right)\pi_h g(t_{j-\frac12}),
\end{split}
\end{equation}
where to achieve the last step,  we used relation \eqref{e:fd2}.

Before going further, we note that numerical methods based on
\eqref{e:quad_int} cannot perform optimally when using a  uniform decomposition of the time interval because $W_h(t)$ is singular at $t=0$.
Hence, the performance of algorithms based on uniform partitions are bound to the error on the first interval $(0,t_1)$.
Measuring in the $\dH^{2s}$-norm for $s\in [0,1/2]$, we have
\begin{equation}\label{e:rate_first_uniform}
\begin{split}
& \bigg\| \int_0^{t_1} W_h(r) \pi_h (g(r)-g(t_{1/2}))\, dr\bigg\|_{\dH^{2s}}  \\
&\qquad \le C \int_0^{t_1} \|W_h(r)\|_{\dH^{2s}\rightarrow \dH^{2s}} \|g(r)-g(t_{1/2})\|_{\dH^{2s}}\, dr \\
&\qquad \leq C t_1 \|f_t\|_{L^\infty(0,T;\dH^{2s})} \int_0^{t_1} r^{\gamma-1}\, dr  \leq C t_1^{1+\gamma} \|f_t\|_{L^\infty(0,T;\dH^{2s})}.
\end{split}
\end{equation}

To overcome this deterioration, we propose a geometric refinement of the
partition near $t_0=0$ which depends on two positive integers $\cM$ and
$\cN$ (see also Section 3.1 of \cite{BP15}).   We first set
$$
t_j := 2^{-(\cM-j)} \tt, \qquad j=1,...,\cM.
$$
We decompose further all but the first interval
$$
I_j:= [t_{j},t_{j+1}] = [ 2^{-(\cM-j)}\tt, 2^{-(\cM-j-1)}\tt], \qquad j=1,\ldots,\cM-1
$$
onto $\cN$ subintervals 
$$
t_{j} = t_{j,0} < ... < t_{j,l} < ... < t_{j,\cN} = t_{j+1} 
$$
where, for $l=0,...,\cN$,
\begin{equation}\label{e:tjl}
t_{j,l} := t_{j} + l \tau_j , \qquad \textrm{with }\tau_j :=  |I_j|/\cN = 2^{-(\cM-j)} \tt/\cN.
\end{equation}

As in \eqref{e:quad_int}, we approximate
$$
\int_{t_{j,l-1}}^{t_{j,l}} W_h(r) \pi_h g(r)\, dr
$$
on each subinterval $I_{j,l}:= [ t_{j,l-1},t_{j,l}]$ by
\begin{equation}\label{e:rel_num_nh}
L_h^{-\spow}\left( e_{\tpow,1}(-{t}_{j,l-1}^\tpow L_h^\spow)-e_{\tpow,1}(-{t}_{j,l}^\tpow L_h^\spow) \right)\pih g({t}_{j,l-1/2}).
\end{equation} 
Here $t_{j,l-1/2}:=\frac 1 2 (t_{j,l-1}+t_{j,l})$.
We  use the bar symbol to denote average quantities over the interval $[t_{j,l-1},t_{j,l}]$, e.g.,
$$
\bE_{j,l} : \vh \rightarrow \vh, \qquad \bE_{j,l}:=\frac{1}{\tau_j}\int^{{t}_{j,l}}_{{t}_{j,l-1}} W_h(r)\, dr.
$$ 
and
$$
\bg_{j,l}:=\frac{1}{\tau_j}\int_{t_{j,l-1}}^{t_{j,l}}  g(r)\, dr.
$$

The approximate solution after time integration is thus given by
\begin{equation}\label{e:space_time_solution}
u_h^{\cN,\cM}(\tt) :=\sum_{j=1}^{\cM-1}\tau_j \sum_{l=1}^\cN \bE_{j,l} (\pi_h f(\tt-t_{j,l-\frac12})).
\end{equation}

We start by assessing the local integration error 
$$
\int_{t_{j,l-1}}^{t_{j,l}} W_h(r) \pi_h (f(\tt-r)-f(\tt-t_{j,l-1/2}))\, dr.
$$
\begin{lemma}[Local Approximation]\label{l:time_stepping_local}
Let $\gamma \in (0,1)$ and $s\in [0,1/2]$.
Let $j\geq 2$ and assume that $g(t)=f(\tt-t)$ belongs to  $H^2(t_{j-1},t_j;\dH^{2s})$. 
There exists a constant $C$ independent of $h$, and $\tau_j$ such that on every interval $I_{j}=[t_{j-1},t_j]$, we have
\begin{equation*}
\begin{split}
& \| \sum_{l=1}^{\cN}  \int_{t_{j,l-1}}^{t_{j,l}} W_h(r) \pi_h (g(r)-g(t_{j,l-1/2}))~dr \|_{\dH^{2s}} \\
& \qquad \leq C \tau_j^{5/2} \left( \sum_{l=0}^\cN t_{j,l}^{2\gamma-2} \right)^{1/2} \| g_{tt} \|_{L^2(t_{j-1},t_j;\dH^{2s})} 
+ C \tau_j^3 \left( \sum_{l=0}^\cN t_{j,l}^{\gamma-2}\right) \| g_t \|_{L^\infty(t_{j-1},t_j;\dH^{2s})}.
\end{split}
\end{equation*}
\end{lemma}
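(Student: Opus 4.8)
The plan is to estimate the quadrature error subinterval by subinterval and then sum over $l=1,\dots,\cN$. Fix $j\ge2$, and write $m_{j,l}:=t_{j,l-1/2}$ for the midpoint of $I_{j,l}:=[t_{j,l-1},t_{j,l}]$. Since $g\in H^2(t_{j-1},t_j;\dH^{2s})$ embeds into $C^1([t_{j-1},t_j];\dH^{2s})$, I would Taylor expand at the midpoint: for $r\in I_{j,l}$, $g(r)-g(m_{j,l})=(r-m_{j,l})\,g_t(m_{j,l})+R_{j,l}(r)$ with $R_{j,l}(r):=\int_{m_{j,l}}^r(r-\sigma)g_{tt}(\sigma)\,d\sigma$. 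The local error on $I_{j,l}$ then splits as
\[
\int_{t_{j,l-1}}^{t_{j,l}}\!W_h(r)\pi_h\big(g(r)-g(m_{j,l})\big)\,dr=\Big(\int_{t_{j,l-1}}^{t_{j,l}}\!(r-m_{j,l})W_h(r)\,dr\Big)\pi_h g_t(m_{j,l})+\int_{t_{j,l-1}}^{t_{j,l}}\!W_h(r)\pi_h R_{j,l}(r)\,dr .
\]
The key structural point — why the midpoint rule gains an order — is that $\int_{I_{j,l}}(r-m_{j,l})\,dr=0$, which lets me replace $W_h(r)$ by $W_h(r)-W_h(m_{j,l})$ in the first (affine) term.

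The technical heart, and the step I expect to be the main obstacle to make sharp, is a pair of pointwise operator bounds for $W_h$. Since $W_h(r)$ is a function of $L_h^\beta$, its $\dH_h^{2s}\to\dH_h^{2s}$ norm is $\max_j|r^{\gamma-1}e_{\gamma,\gamma}(-r^\gamma\lambda_{j,h}^\beta)|$; as $e_{\gamma,\gamma}$ is entire and obeys the decay bound \eqref{ml-bound-scalar} on the negative real axis, $|e_{\gamma,\gamma}(-x)|\le C$ for $x\ge0$, so $\|W_h(r)\|_{\dH_h^{2s}\to\dH_h^{2s}}\le Cr^{\gamma-1}$. Differentiating in $r$ and using the Mittag--Leffler identity \eqref{e:fd2_2} gives $\partial_r\big(r^{\gamma-1}e_{\gamma,\gamma}(-r^\gamma\lambda^\beta)\big)=(\gamma-1)r^{\gamma-2}e_{\gamma,\gamma}(-r^\gamma\lambda^\beta)+r^{2\gamma-2}\lambda^\beta\big((\gamma-1)e_{\gamma,2\gamma}(-r^\gamma\lambda^\beta)-e_{\gamma,2\gamma-1}(-r^\gamma\lambda^\beta)\big)$; applying \eqref{ml-bound-scalar} to $e_{\gamma,2\gamma}$ and $e_{\gamma,2\gamma-1}$ and using $\lambda^\beta/(1+r^\gamma\lambda^\beta)\le r^{-\gamma}$ to trade the spatial power $\lambda^\beta$ for $r^{-\gamma}$ yields $\|W_h'(r)\|_{\dH_h^{2s}\to\dH_h^{2s}}\le Cr^{\gamma-2}$ for $r>0$. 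Because $j\ge2$, we have $r\ge t_{j,l-1}\ge t_{j-1}>0$ throughout $I_j$, so these bounds are finite on every subinterval, and since $\gamma-1,\gamma-2<0$ they are maximal at the left endpoint $t_{j,l-1}$.

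With these bounds in hand the two terms are routine. For the affine term, $\|W_h(r)-W_h(m_{j,l})\|_{\dH_h^{2s}\to\dH_h^{2s}}\le C\tau_j\,t_{j,l-1}^{\gamma-2}$ uniformly on $I_{j,l}$, so its $\dH^{2s}$-norm is at most $C\tau_j t_{j,l-1}^{\gamma-2}\int_{I_{j,l}}|r-m_{j,l}|\,dr\,\|\pi_h g_t(m_{j,l})\|_{\dH^{2s}}\le C\tau_j^3 t_{j,l-1}^{\gamma-2}\|g_t\|_{L^\infty(I_j;\dH^{2s})}$, where I use the equivalence of $\dH^{2s}$ and $\dH_h^{2s}$ on $\vh$ \eqref{ineq:H_h_H} and stability of $\pi_h$ \eqref{pih-bound}; summing over $l$ and bounding $\sum_{l=1}^\cN t_{j,l-1}^{\gamma-2}=\sum_{l=0}^{\cN-1}t_{j,l}^{\gamma-2}\le\sum_{l=0}^{\cN}t_{j,l}^{\gamma-2}$ gives the second term of the claim. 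For the remainder term, $\|R_{j,l}(r)\|_{\dH^{2s}}\le\tau_j\|g_{tt}\|_{L^1(I_{j,l};\dH^{2s})}$, hence $\big\|\int_{I_{j,l}}W_h(r)\pi_h R_{j,l}(r)\,dr\big\|_{\dH^{2s}}\le C\tau_j\|g_{tt}\|_{L^1(I_{j,l})}\int_{I_{j,l}}r^{\gamma-1}\,dr\le C\tau_j^2 t_{j,l-1}^{\gamma-1}\|g_{tt}\|_{L^1(I_{j,l})}$; a Cauchy--Schwarz in $l$ together with $\|g_{tt}\|_{L^1(I_{j,l})}\le\tau_j^{1/2}\|g_{tt}\|_{L^2(I_{j,l})}$ and $\sum_l\|g_{tt}\|_{L^2(I_{j,l})}^2=\|g_{tt}\|_{L^2(I_j)}^2$ converts this into $C\tau_j^{5/2}\big(\sum_{l=0}^{\cN}t_{j,l}^{2\gamma-2}\big)^{1/2}\|g_{tt}\|_{L^2(I_j;\dH^{2s})}$, the first term of the claim. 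Adding the two contributions finishes the proof. The only delicate bookkeeping is keeping the powers of $\tau_j$ straight and ensuring that the most singular subinterval ($l=1$, with left endpoint $t_{j,0}=t_{j-1}$) stays harmless, which is exactly what the hypothesis $j\ge2$ guarantees.
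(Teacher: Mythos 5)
Your proof is correct and arrives at exactly the bounds in the statement; the only difference from the paper is in how the midpoint cancellation is organized. The paper adds and subtracts the interval average $\bE_{j,l}=\tau_j^{-1}\int_{I_{j,l}}W_h$, writing the local error as $\tau_j\bE_{j,l}\pi_h(\bg_{j,l}-g(t_{j,l-1/2}))+\int_{I_{j,l}}(W_h(r)-\bE_{j,l})\pi_h(g(r)-g(t_{j,l-1/2}))\,dr$, so the second-order gain comes from $\bg_{j,l}-g(t_{j,l-1/2})=O(\tau_j^{3/2}\|g_{tt}\|_{L^2})$ in the first piece and from the product of two first-order fluctuations in the second. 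You instead Taylor-expand $g$ at the midpoint and use the vanishing first moment $\int_{I_{j,l}}(r-t_{j,l-1/2})\,dr=0$ to subtract $W_h(t_{j,l-1/2})$ in the affine term; the roles of the two derivative bounds are then swapped ($\|W_h'\|\lesssim r^{\gamma-2}$ pairs with $g_t$ in your affine term, $\|W_h\|\lesssim r^{\gamma-1}$ pairs with $g_{tt}$ in your remainder), but the pairings produce term-by-term the same powers $\tau_j^{5/2}t_{j,l-1}^{\gamma-1}$ and $\tau_j^3 t_{j,l-1}^{\gamma-2}$. Your key operator estimates --- $\|W_h(r)\|_{\dH_h^{2s}\to\dH_h^{2s}}\le Cr^{\gamma-1}$ and $\|W_h'(r)\|_{\dH_h^{2s}\to\dH_h^{2s}}\le Cr^{\gamma-2}$ via \eqref{e:fd2_2}, \eqref{ml-bound-scalar} and the trade $r^\gamma\lambda^\beta/(1+r^\gamma\lambda^\beta)\le1$ --- are exactly those of the paper, as is the final Cauchy--Schwarz summation in $l$. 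Neither decomposition buys anything over the other here; yours is perhaps marginally more transparent about \emph{why} the midpoint rule is second order, while the paper's average-based splitting is the one that generalizes directly to the quadrature actually implemented (which uses $\bE_{j,l}$ as a computable object).
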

\begin{proof} We use  the following decomposition on each sub-interval:
$$
\bal
& \int_{t_{j,l-1}}^{t_{j,l}} W_h(r) \pi_h (g(r)-g(t_{j,l-1/2}))\, dr\\
& \qquad 
	 =\underbrace{\dt_j \bE_{j,l}\pi_h (\bg_{j,l}-g({t}_{j,l-\frac12}))}_{=:E_1}  + \underbrace{\int_{{t}_{j,l-1}}^{{t}_{j,l}} (W_h({r})-\bE_{j,l})\pi_h(g({r})-g({t}_{j,l-\frac12}))\, d{r}}_{=:E_2}.
	 \eal
$$

$\boxed{1}$ We estimate $E_1$
\beq\label{e2-bound}
\|E_1\|_{\dH^{2s}} \le \dt_j\|\bE_{j,l}\pi_h\|_{\dH^{2s} \to \dH^{2s}}\|\bg_{j,l}-g({t}_{j,l-\frac12})\|_{\dH^{2s}} .
\eeq
We now bound $\|\bE_{j,l}\pi_h\|_{\dH^{2s} \to \dH^{2s}}$ and $\|\bg_{j,l}-g({t}_{j,l-\frac12})\|_{\dH^{2s}} $ separately. 
For the latter, we  expand $g(\eta)$ at $\eta=t_{j,l-\frac12}$ to get
$$
g(\eta)-g({t}_{j,l-\frac12})=(\eta-{t}_{j,l-\frac12})g_t({t}_{j,l-\frac12})+\int_{t_{j,l-\frac12}}^{\eta}(r-{t}_{j,l-\frac12}) g_{tt}(r)\, dr ,
$$
where $g_t$ and $g_{tt}$ denote the first and second partial derivative in time of $g$. 
As a consequence, taking advantage of $t_{j,l-\frac12}$ being the midpoint of the interval $I_{j,l}$, we obtain
\begin{align*}
\bg_{j,l}-g({t}_{j,l-\frac12})  &= \frac{1}{\dt_j} \int_{{t}_{j,l-1}}^{{t}_{j,l}}\left(g(\eta)-g({t}_{j,l-\frac12}) \right)\, d\eta \\
&=\frac{1}{\dt_j} \int_{{t}_{j,l-1}}^{{t}_{j,l}} \int_{t_{j,l-\frac12}}^{\eta}(r-{t}_{j,l-\frac12}) g_{tt}(r)\, dr\, d\eta 
\end{align*}
and so using a Cauchy-Schwarz inequality
\begin{equation}\label{gt-bound}
\| \bg_j-g({t}_{j-\frac12})\|_{\dH^{2s}} \leq  \tau_j^{3/2} \| g_{tt}\|_{L^2(t_{j,l-1},t_{j,l};\dH^{2s})}.
\end{equation}

In order to bound  $\|\bE_{j,l}\pi_h\|_{\dH^{2s} \to \dH^{2s}}$, we note that from the definition of the discrete dotted spaces $\dH^{2s}_h$ (see \eqref{e:dotted_discrete_norm}), we have
$$
\|e_{\tpow,\tpow}(-t^\tpow L_h^\spow)\|_{\dH^{2s}_h \to \dH^{2s}_h}\le C.
$$
Therefore, from the expression of $W_h(t)$ in \eqref{e:discrete_nh}, the equivalence of norms \eqref{ineq:H_h_H} and the stability estimate \eqref{pih-bound} for $\pi_h$, we derive that
\beq\label{bw-bound}
\bal
\|\bE_{j,l}\pi_h\|_{\dH^{2s}\to \dH^{2s}}&\le \frac{1}{\dt_j}\int_{{t}_{j,l-1}}^{{t}_{j,l}}\eta^{\tpow-1}\|e_{\tpow,\tpow}(-\eta^\tpow L_h^\spow) \pi_h\|_{\dH^{2s}_h \to \dH^{2s}_h}\, d\eta\\
&\le \frac{C}{\dt_j}\int_{{t}_{j,l-1}}^{{t}_{j,l}}\eta^{\tpow-1}\, d\eta\le C {t}_{j,l-1}^{\tpow-1} .
\eal
\eeq

Estimates \eqref{gt-bound} and \eqref{bw-bound}  into \eqref{e2-bound} give the final bound for $E_1$
\beq\label{e2-last-bound}
	\|E_1\|_{\dH^{2s}}\le C\dt_j^\frac52  {t}_{j,l-1}^{\tpow-1} \|g_{tt}\|_{L^2({t}_{j,l-1},{t}_{j,l};\dH^{2s})}.
\eeq

$\boxed{2}$ We estimate $E_2$
\beq \label{e:E2}
\|E_2\|_{\dH^{2s}} \le  \int_{{t}_{j,l-1}}^{{t}_{j,l}}\|(W_h({r})-\bE_{j,l})\pi_h\|_{\dH^{2s}\to \dH^{2s}}\|g({r})-g({t}_{j,l-\frac12})\|_{\dH^{2s}}\, d{r} .\eeq
In this case as well, we need to estimate two terms separately, namely $\|(W_h({r})-\bE_{j,l})\pi_h\|_{\dH^{2s}\to \dH^{2s}}$ and $\|g({r})-g({t}_{j,l-\frac12})\|_{\dH^{2s}}$. 
For the latter, we write
\beq\label{gj-bound}
	\|g({r})-g({t}_{j,l-\frac12})\|_{\dH^{2s}} = \|  \int_{t_{j-\frac12}}^r g_t(\eta)\, d\eta \|_{\dH^{2s}} \leq \tau_j \|g_t\|_{L^\infty(t_{j,l-1},t_{j,l};
	\dH^{2s})}
\eeq

Next, we bound $\|(W_h({r})-\bE_{j,l})\pi_h\|_{\dH^{2s} \to \dH^{2s}}$. As before, it suffices to estimate
$\|W_h({r})-\bE_{j,l}\|_{\dH^{2s}_h \to \dH^{2s}_h}$.
To achieve this, we use the eigenfunctions $\{\psi_{i,h}\}_{i=1}^{M_h}$ of $L_h$.
By \eqref{e:fd2_2},
\begin{equation*}
\begin{split}
 W_h'({r})\psi_{i,h}  =   &  r^{\tpow-2} \lbrace (\tpow-1)e_{\tpow,\tpow}(-r^\tpow \lambda_{i,h}^\spow)\\
  & +r^\tpow \lambda_{i,h}^\beta    ((\tpow-1)e_{\tpow,2\tpow}(-r^\tpow \lambda_{i,h}^\beta)-e_{\tpow,2\tpow-1}(-r^\tpow \lambda_{i,h}^\spow))\rbrace \psi_{i,h}.
\end{split}
\end{equation*}
This  and  \eqref{ml-bound-scalar} with $z=-r^\gamma\lambda_{i,h}^\beta$ imply that  for $r \in I_{j,l}$,
$$
\|  W_h'({r})\psi_{i,h} \|\leq C r^{\gamma-2} \leq C t_{j,l-1}^{\gamma-2},
$$
where the constant in the above inequality is independent of $j$, $l$ and $h$.
Whence, $$\|W'_h(r)\|_{\dot H^{2s}_h\to \dot H^{2s}_h}\leq C t_{j,l-1}^{\gamma-2}$$ and 
$$
\|W_h({r})-\bE_{j,l}\|_{\dH^{2s}_h \to \dH^{2s}_h} \leq C \tau_j  \sup_{r \in I_{j,l}}\|W_h'(r) \|_{\dH^{2s}_h \to \dH^{2s}_h} \leq C \tau_j  t_{j,l-1}^{\gamma-2}.
$$
The above estimate and \eqref{gj-bound} in \eqref{e:E2} yield the final bound on $E_2$
\beq\label{e3-last-bound}
	\|E_2\|_{\dH^{2s}_h} \le C\dt_j^3   t_{j,l-1}^{\gamma-2} \|g_t\|_{L^\infty(t_{j,l-1},t_{j,l};\dH^{2s})}.
\eeq

\boxed{3} Summing up the contribution from each subinterval and using a Cauchy-Schwarz inequality, yields the desired result.
\end{proof}

\begin{remark}[Uniform time-stepping]\label{r:uniform_step}
In the case of uniform time-stepping, i.e. $\cN=0$ and $t_j = j \tau$,
$\tau = \tt/\cM $, we derive from the estimate provided in
Lemma~\ref{l:time_stepping_local} and the first interval estimate
\eqref{e:rate_first_uniform} that the quadrature error behaves
asymptotically like $\tau^{1+\gamma}$. We do not pursue  this further
but  rather investigate errors coming from the geometric partition. 
\end{remark}

\begin{theorem}[Time Discretization of Non-Homogeneous Problem]\label{t:geo}
Let $\gamma \in (0,1)$, $s\in [0,1/2]$, $\tt \geq\tt_0 >0$. Let $\cN$ be a positive
integer and  
\beq
\cM=\left\lceil\frac{2\log_2{\cN}}{\tpow}\right\rceil.
\label{cM}
\eeq
Assume that $f$ is in $H^2(0,\tt;\dH^{2s})$ and let $u_h^{\cN}(\tt):=
u_h^{\cN,\cM}$ be defined by \eqref{e:space_time_solution} and let $u_h(\tt)$ be the semi-discrete in space solution \eqref{e:discrete_nh}. Then there exists a constant $C$ independent of $\cN$, $h$ and $\tt$ satisfying
$$
	\|u_h(\tt)-u_h^{\cN}(\tt)\|_{\dH^{2s}} \le C\max(\tt^\gamma,\tt^{\frac32+\gamma})\cN^{-2}  \|f\|_{H^2(0,\tt;\dH^{2s})}.
$$
\end{theorem}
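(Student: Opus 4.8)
\emph{Proof plan.} The plan is to compare $u_h(\tt)$ with $u_h^{\cN,\cM}(\tt)$ interval by interval. Writing $g(r):=f(\tt-r)$ and recalling that $\tau_j\bE_{j,l}=\int_{t_{j,l-1}}^{t_{j,l}}W_h(r)\,dr$, the definition \eqref{e:space_time_solution} yields the telescoping identity
$$
u_h(\tt)-u_h^{\cN,\cM}(\tt)=\int_0^{t_1}W_h(r)\pi_h g(r)\,dr+\sum_{j=1}^{\cM-1}\sum_{l=1}^{\cN}\int_{t_{j,l-1}}^{t_{j,l}}W_h(r)\pi_h\bigl(g(r)-g(t_{j,l-\frac12})\bigr)\,dr,
$$
since the quadrature contributes nothing on $[0,t_1]$ while the refined subintervals $I_{j,l}$ exactly cover $[t_1,\tt]$. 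For the first term I would use $\|W_h(r)\pi_h\|_{\dH^{2s}\to\dH^{2s}}\le Cr^{\tpow-1}$ (obtained exactly as in \eqref{bw-bound}, from $W_h(r)=r^{\tpow-1}e_{\tpow,\tpow}(-r^\tpow L_h^\spow)$, the norm equivalence \eqref{ineq:H_h_H} and the stability \eqref{pih-bound}) to get $\|\int_0^{t_1}W_h(r)\pi_h g(r)\,dr\|_{\dH^{2s}}\le C\tpow^{-1}t_1^\tpow\|g\|_{L^\infty(0,\tt;\dH^{2s})}$; the point of the choice \eqref{cM} of $\cM$ is precisely that $t_1=2^{-(\cM-1)}\tt\le 2\tt\,\cN^{-2/\tpow}$, so $t_1^\tpow\le 2\tt^\tpow\cN^{-2}$, which is the source of the $\tt^\tpow\cN^{-2}$ contribution.

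For the double sum I would apply Lemma~\ref{l:time_stepping_local} on each outer interval $I_j$, $j=1,\dots,\cM-1$. On $I_j$ one has $|I_j|=t_j$, hence $\tau_j=t_j/\cN$ and $t_{j,l}\in[t_j,2t_j]$; since $\tpow<1$ the negative exponents give $\sum_{l=0}^{\cN}t_{j,l}^{2\tpow-2}\le 2\cN t_j^{2\tpow-2}$ and $\sum_{l=0}^{\cN}t_{j,l}^{\tpow-2}\le 2\cN t_j^{\tpow-2}$. Substituting $\tau_j=t_j/\cN$ into the bound of Lemma~\ref{l:time_stepping_local}, the powers of $\cN$ collapse and the local error on $I_j$ becomes
$$
C\,t_j^{3/2+\tpow}\cN^{-2}\,\|g_{tt}\|_{L^2(I_j;\dH^{2s})}+C\,t_j^{1+\tpow}\cN^{-2}\,\|g_t\|_{L^\infty(I_j;\dH^{2s})}.
$$

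Summing over $j$, a Cauchy--Schwarz inequality in $j$ controls the first contribution by $\cN^{-2}\bigl(\sum_{j}t_j^{3+2\tpow}\bigr)^{1/2}\|g_{tt}\|_{L^2(0,\tt;\dH^{2s})}$, and since $t_j=2^{-(\cM-j)}\tt$ the geometric sum satisfies $\sum_{j=1}^{\cM-1}t_j^{3+2\tpow}\le C(\tpow)\tt^{3+2\tpow}$ uniformly in $\cM$; likewise $\sum_{j=1}^{\cM-1}t_j^{1+\tpow}\le C(\tpow)\tt^{1+\tpow}$ handles the second term. This produces contributions $C\tt^{3/2+\tpow}\cN^{-2}\|g_{tt}\|_{L^2}$ and $C\tt^{1+\tpow}\cN^{-2}\|g_t\|_{L^\infty}$.

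Finally I would collect the three contributions and return to $f$ via $g(r)=f(\tt-r)$, so that $\|g_{tt}\|_{L^2(0,\tt;\dH^{2s})}=\|f_{tt}\|_{L^2(0,\tt;\dH^{2s})}$ and similarly for the lower-order terms; together with the embedding $H^2(0,\tt)\hookrightarrow C^1[0,\tt]$, whose constant is uniform for $\tt\ge\tt_0$, this bounds $\|g\|_{L^\infty}$, $\|g_t\|_{L^\infty}$ and $\|g_{tt}\|_{L^2}$ by $C\|f\|_{H^2(0,\tt;\dH^{2s})}$, and then $\tt^\tpow+\tt^{1+\tpow}+\tt^{3/2+\tpow}\le 3\max(\tt^\tpow,\tt^{3/2+\tpow})$ gives the claim. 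All of the genuine analytic work is already packaged in Lemma~\ref{l:time_stepping_local}; the only delicate bookkeeping here is verifying that the factors $\cN^{1/2}$ and $\cN$ emitted by the inner sums combine with $\tau_j^{5/2}$ and $\tau_j^3$ to leave exactly $\cN^{-2}$, and that the geometric grading together with the choice \eqref{cM} makes both the outer sum over $j$ and the truncation error on $[0,t_1]$ consistent with the target rate $\cN^{-2}$.
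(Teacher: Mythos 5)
Your proposal is correct and follows essentially the same route as the paper: the same split into the truncated first interval plus the sum of local midpoint errors, the same application of Lemma~\ref{l:time_stepping_local} with $\tau_j = t_j/\cN$ and $t_{j,l}\simeq t_j$ to collapse the inner sums to a clean $\cN^{-2}$, the same geometric summation over $j$, and the same use of the choice \eqref{cM} to make $t_1^\gamma \le C\tt^\gamma\cN^{-2}$. The only difference is cosmetic: you make the exponent bookkeeping slightly more explicit than the paper does.
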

\begin{proof}
Using the definitions of $u_h(\tt)$ and $u_h^{\cN}(\tt)$, we write
\begin{equation*}
\begin{split}
u_h(\tt)-u_h^{\cN}(\tt) &= \int_0^{t_1} W_h(r) \pi_h f(\tt-r)\, dr \\
& \quad + \sum_{j=1}^{\cM-1} \sum_{l=1}^\cN  \int_{t_{j,l-1}}^{t_{j,l}} W_h(r) \pi_h (f(\tt-r)-f(\tt-t_{j,l-1/2}))\, dr.
\end{split}
\end{equation*}
For the first term, we note that \eqref{ml-bound-scalar} immediately
implies that $\| e_{\gamma,\gamma}(-r^\gamma L_h^\beta) \|_{\dH^{2s}\to \dH^{2s}} \leq C$.
The stability of the $L^2$ projection \eqref{pih-bound}  and
\eqref{cM} give
\begin{align*}
\bigg\| \int_0^{t_1} W_h(r) \pi_h f(\tt-r)\, dr \bigg \|_{\dH^{2s}}& \leq C
                                                               2^{-\gamma(\cM-1)}\tt^{\gamma}
                                                               \| f \|_{L^\infty(0,\tt;\dH^{2s})}\\
&\leq C \tt^\gamma \cN^{-2}  \| f\|_{L^\infty(0,\tt;\dH^{2s})}.  
\end{align*}
For the second term, we apply Lemma~\ref{l:time_stepping_local} on each interval $I_j$, $j=1,\ldots,\cM-1$ to get 
\begin{align*}
&\bigg \|\sum_{j=1}^{\cM-1} \sum_{l=1}^\cN  \int_{t_{j,l-1}}^{t_{j,l}} W_h(r) \pi_h (f(\tt-r)-f(\tt-t_{j,l-1/2}))\, dr\bigg\|_{\dH^{2s}}\\
& \qquad \leq C \sum_{j=1}^{\cM-1} \tau_j^{5/2} \cN^{1/2} t_{j}^{\gamma-1} \| g_{tt} \|_{L^2(t_{j-1},t_j;\dH^{2s})} 
+ C \sum_{j=1}^{\cM-1} \tau_j^3 \cN t_{j}^{\gamma-2} \| g_t \|_{L^\infty(t_{j-1},t_j;\dH^{2s})},
\end{align*}
where we use the fact that $C^{-1} t_j \leq t_{j,l} \leq C t_j$ for some constant $C$ independent of $\cN$ and $\cM$.
Hence, a Cauchy-Schwarz inequality and the definitions of $t_j$ and $\tau_j$ yield 
\begin{align*}
&\bigg\|\sum_{j=1}^{\cM-1} \sum_{l=1}^\cN  \int_{t_{j,l-1}}^{t_{j,l}} W_h(r) \pi_h (f(\tt-r)-f(\tt-t_{j,l-1/2}))dr\bigg\|_{\dH^{2s}}\\
& \qquad \leq C\tt^{\frac32+\gamma} \cN^{-2}  \| g_{tt} \|_{L^2(0,T;\dH^{2s})}  \left( \sum_{j=1}^{\cM} 2^{-(3+2\gamma)(\cM-j)}\right)^{1/2}
\\
& \qquad + C \tt^{1+\gamma} \cN^{-2} \| g_t \|_{L^\infty(t_{j-1},t_j;\dH^{2s})} \sum_{j=1}^{\cM} 2^{-(1+\gamma)(\cM-j)}\\
& \qquad \leq C \cN^{-2} (\tt^{\frac32+\gamma} \| g_{tt} \|_{L^2(0,\tt;\dH^{2s})} + \tt^{1+\gamma} \| g_t \|_{L^\infty(0,\tt;\dH^{2s})}).
\end{align*}
This, together with the estimate for the first interval, implies
\begin{equation*}
\begin{split}
\| u_h(\tt)-u_h^{\cN}(\tt) \|_{\dH^{2s}} &\leq C\cN^{-2} \big( \tt^\gamma\| f\|_{L^\infty(0,\tt;\dH^{2s})}\\
& \qquad+ \tt^{3/2+\gamma} \| g_{tt} \|_{L^2(0,\tt;\dH^{2s})} + \tt^{1+\gamma} \| g_t \|_{L^\infty(0,\tt;\dH^{2s})}\big).
\end{split}
\end{equation*}
To conclude, we  observe that 
$$\| g_{t}
\|_{L^\infty(0,\tt;\dH^{2s})} = \|f_{t} \|_{L^\infty(0,\tt;\dH^{2s})},\qquad
 \| g_{tt} \|_{L^2(0,\tt;\dH^{2s})}= \| f_{tt}
\|_{L^2(0,\tt;\dH^{2s})}$$
 and that the embedding  $H^1(0,\tt) \subset
L^\infty(0,\tt)$ is continuous with norm independent of $\tt \geq \tt_0$.
\end{proof}

\subsection{A Sinc Approximation of the Contour Integral}
In view of \eqref{e:rel_num_nh}, one remaining problem is to compute 
$$
H_h(t,\tau):=L_h^{-\beta} \left( e_{\gamma,1}(-t^\gamma L_h^\beta) - e_{\gamma,1}(-(t+\tau)^\gamma L_h^\beta) \right)g_h
$$
for $t>0$, $\tau>0$ and $g_h \in \vh$. 
We proceed as in the homogeneous case discussed in Section~\ref{sinc}. 

 Let $N$ be a positive integer and let $k>0$ be a quadrature spacing.
For $t,\dt>0$ and $g_h\in \vh$, we
propose the following sinc approximation of $H_h(t,\tau)$:
\beq\label{e:sincapp2}
\bal
	Q_{h,k}^{N}(t,\tau)g_h := \frac{k}{2\pi i} \sum_{j=-N}^N & [e_{\tpow,1}({-t^\tpow z(y_j)^\spow})-e_{\tpow,1}({-(t+\dt)^\tpow z(y_j)^\spow})]\\
	&z(y_j)^{-\spow} z'(y_j)[(z(y_j)I-L_h)^{-1}g_h],
\eal
\eeq
where $z(y)$ for $y\in \RR$ is the hyperbolic contour \eqref{hc}.
With this, the computable approximation of the solution to the non-homogeneous problem becomes
\beq\label{e:fully}
	 u_{h,k}^{\cN,N}(\tt):= \sum_{j=1}^{\cM-1} \sum_{l=1}^{\cN} 
Q_{h,k}^N({t}_{j,l-1},\dt_j)\pih f(t-{t}_{j,l-\frac12}) .
\eeq
We start with the approximation  of $H_h(t,\tau)$ by $Q_{h,k}^N(t,\tau)$.
\begin{lemma}\label{t:sincquad}
Let  $t,\dt>0$, $s\in [0,1/2]$ and $d\in (0,\pi/4)$.
There exists a constant $C$ only depending on $d$, $b$, $\beta$, $\lambda_1$ such that for any $g_h \in \vh$,
$$
	\|(H_h(t,\dt)- Q_{h,k}^N(t,\dt))g_h\|_{\dH^{2s}}\le C t^{-1}\dt\left(e^{-\pi d/k}+e^{-\spow Nk}\right)\|g_h\|_{\dH_h^{2s}} .
$$
\end{lemma}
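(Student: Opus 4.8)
The plan is to reduce this estimate to the sinc quadrature bound already established in the homogeneous setting (Lemma~\ref{l:sincquad}), by observing that both $H_h(t,\dt)$ and $Q_{h,k}^N(t,\dt)$ are primitives in the variable $\dt$, so that their difference is the integral over $(t,t+\dt)$ of the sinc quadrature error of the operator $W_h(r)$. The extra factor $t^{-1}\dt$ in the bound will then come out of $\int_t^{t+\dt}r^{-1}\,dr$.

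First I would record two identities. Applying \eqref{e:quad_int} with $t_{j-1}=t$ and $t_j=t+\dt$ gives at once
\[
H_h(t,\dt)g_h=\int_t^{t+\dt}W_h(r)g_h\,dr .
\]
For the quadrature side, introduce $\widetilde Q^N_{h,k}(r)$, the natural sinc approximation of $W_h(r)=r^{\gamma-1}e_{\gamma,\gamma}(-r^\gamma L_h^\beta)$, i.e.\ the exact analogue of \eqref{e:sincapp} with $e_{\gamma,1}$ replaced by $e_{\gamma,\gamma}$ and the prefactor $r^{\gamma-1}$ inserted, where, just as in Section~\ref{sinc}, one first deforms the contour $\cC$ of \eqref{Wsint} to the hyperbolic contour \eqref{hc} (legitimate since $e_{\gamma,\gamma}(-r^\gamma z^\beta)R_z(L_h)$ is analytic between the two contours and decays there by \eqref{ml-bound-scalar} and the resolvent bound). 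Using at each node $z=z(y_j)$ the scalar primitive identity
\[
\int_t^{t+\dt}r^{\gamma-1}e_{\gamma,\gamma}(-r^\gamma z^\beta)\,dr=z^{-\beta}\big(e_{\gamma,1}(-t^\gamma z^\beta)-e_{\gamma,1}(-(t+\dt)^\gamma z^\beta)\big)
\]
(the integrated form of \eqref{e:fd2}; cf.\ \eqref{e:quad_int}, and note $z(y_j)$ has positive real part so $z^{-\beta}$ is unambiguous) and comparing with \eqref{e:sincapp2}, one obtains $Q^N_{h,k}(t,\dt)g_h=\int_t^{t+\dt}\widetilde Q^N_{h,k}(r)g_h\,dr$. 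Subtracting,
\[
\big(H_h(t,\dt)-Q^N_{h,k}(t,\dt)\big)g_h=\int_t^{t+\dt}\big(W_h(r)-\widetilde Q^N_{h,k}(r)\big)g_h\,dr .
\]

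Next I would estimate the integrand for fixed $r>0$. Expanding $\big(W_h(r)-\widetilde Q^N_{h,k}(r)\big)g_h$ in the eigenbasis $\{\psi_{i,h}\}$ of $L_h$ exactly as in \eqref{errorl}, its $\dH_h^{2s}$-norm is controlled by $(2\pi)^{-1}r^{\gamma-1}\|g_h\|_{\dH_h^{2s}}$ times the maximum over $i$ of the scalar quadrature error of $y\mapsto e_{\gamma,\gamma}(-r^\gamma z(y)^\beta)z'(y)(z(y)-\lambda_{i,h})^{-1}$. Since the Rayleigh quotient principle gives $\lambda_{i,h}\ge\lambda_{1,h}\ge\lambda_1$, and since the sectorial decay \eqref{ml-bound-scalar} holds verbatim for $e_{\gamma,\gamma}$, the proofs of Lemma~\ref{l:contour-esti-i} and Lemma~\ref{l:sincquad} carry over with $\mu=\gamma$ in place of $\mu=1$ and bound this scalar error by $Cr^{-\gamma}(e^{-\pi d/k}+e^{-\beta Nk})$ with $C=C(d,b,\beta,\lambda_1)$. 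Combining these, and converting the left-hand norm via \eqref{ineq:H_h_H} (recall $\big(W_h(r)-\widetilde Q^N_{h,k}(r)\big)g_h\in\vh$), gives
\[
\big\|\big(W_h(r)-\widetilde Q^N_{h,k}(r)\big)g_h\big\|_{\dH^{2s}}\le\frac{C}{r}\big(e^{-\pi d/k}+e^{-\beta Nk}\big)\|g_h\|_{\dH_h^{2s}} .
\]
Integrating over $(t,t+\dt)$ and using $\int_t^{t+\dt}r^{-1}\,dr=\ln(1+\dt/t)\le\dt/t$ then yields the asserted bound.

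The only step beyond bookkeeping is the claim that the $S(B_d)$-analysis underlying Lemma~\ref{l:contour-esti-i}--Lemma~\ref{l:sincquad} goes through, with the same $r^{-\gamma}$ scaling and the same dependence of the constants, when $e_{\gamma,1}$ is replaced by $e_{\gamma,\gamma}$; this is where I would look most carefully. It should be immediate, however, since the only property of the Mittag-Leffler function used in those proofs is the sectorial decay \eqref{ml-bound-scalar}, which is uniform in the second index $\mu$; everything else reduces to the exact arithmetic with the primitive identity recorded above.
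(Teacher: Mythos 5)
Your proof is correct, and it rests on exactly the same ingredients as the paper's: the primitive identity behind \eqref{e:quad_int} (equivalently \eqref{e:fd2}), the sectorial decay \eqref{ml-bound-scalar} of the Mittag--Leffler function, and the sinc machinery of Lemmas~\ref{l:contour-esti-i}--\ref{l:sincquad} on the hyperbolic contour. The only structural difference is the order of operations. The paper first bounds the scalar difference $|e_{\gamma,1}(-t^\gamma z(y)^\beta)-e_{\gamma,1}(-(t+\tau)^\gamma z(y)^\beta)|\le Ct^{-1}\tau$ pointwise on the contour (by integrating \eqref{e:fd2} and using \eqref{ml-bound-scalar}), and then runs the $S(B_d)$ analysis once on the resulting integrand $h_\lambda(\cdot,t,\tau)$, where the decay in $\Re y$ now comes from the factor $|z(y)|^{-\beta}\le Ce^{-\beta|\Re y|}$ rather than from the Mittag--Leffler function. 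You instead write both $H_h(t,\tau)$ and $Q^N_{h,k}(t,\tau)$ as $r$-integrals over $(t,t+\tau)$, apply the quadrature error estimate to $W_h(r)$ for each fixed $r$ (the $\mu=\gamma$ version of Lemma~\ref{l:sincquad}, which you correctly observe goes through verbatim since only \eqref{ml-bound-scalar} is used), and integrate $r^{-1}$ at the end. Your version is slightly more modular in that it reuses Lemma~\ref{l:sincquad} essentially as a black box instead of re-verifying the $S(B_d)$ conditions for a new integrand; the paper's version avoids introducing the auxiliary quadrature $\widetilde Q^N_{h,k}(r)$. Both yield the same constant dependence and the same $t^{-1}\tau$ factor, yours via $\int_t^{t+\tau}r^{-1}\,dr\le \tau/t$ and the paper's via $s^{-1}\le t^{-1}$ inside the derivative bound.
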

\begin{proof}
For $y\in B_d$, define 
$$h_\lambda(y,t,\dt)=z(y)^{-\spow}[e_{\tpow,1}(-t^\tpow z(y)^\spow)-e_{\tpow,1}(-(t+\dt)^\tpow z(y)^\spow)]z'(y)(z(y)-\lambda)^{-1}$$ 
and note that
$$
\bal
	&|e_{\tpow,1}(-t^\tpow z(y)^\spow)-e_{\tpow,1}(-(t+\dt)^\tpow z(y)^\spow)|\\
	& \qquad\le \int_t^{t+\dt} |z(y)^\beta s^{\tpow-1}e_{\tpow,\tpow}(-s^\tpow z(y)^\beta)|\, ds\le  Ct^{-1}\dt.
\eal
$$
Here we applied \eqref{ml-bound-scalar} replacing $z$ with $-z(y)^\spow s^\tpow$ 
so that $$|z(y)^\spow s^\tpow e_{\tpow,\tpow}(-s^\tpow z(y)^\spow)|\le C .$$
Hence, the desired estimate follows upon proceeding as in the proofs of Lemmas~\ref{l:contour-esti-i} and~\ref{l:sincquad}.
\end{proof}

%\red{We may not include the following remark since the first interval $[0,t_1]$ 
%is not involved in our time discretization scheme.
%\begin{remark}[First Step]\label{r:sincquad3} 
%Similar arguments guarantee that the approximation of $\mathcal H_{h}(t):=L_h^{-\spow} e_{\tpow,1}(-t^\tpow L_h^\spow) g_h$ by its corresponding sinc quadrature approximation
%$$
%\mathcal Q_{h,k}^N(t) := \frac{k}{2\pi i} \sum_{j=-N}^N e_{\tpow,1}({-t^\tpow z(y_j)^\spow}) z(y_j)^{-\spow} z'(y_j)[(z(y_j)I-L_h)^{-1}g_h];
%$$
%satisfies the error estimate
%$$\|(\mathcal H_h(t)-\mathcal Q_{h,k}^N(t))g_h\|_{\dH^{2s}}\le C\left( e^{-\pi d/k}+e^{-\spow Nk} \right)\|g_h\|_{\dH^{2s}} .$$
%where the constant $C$ is independent of $h$, $t$, $k$ and $N$.
%\end{remark}}

We are now in a position to prove the error estimate for the sinc quadrature on the non-homogeneous problem.
\begin{lemma}\label{t:sincquad2}
Let $\tt>0$, $s\in [0,1/2]$ and assume that $f \in L^\infty(0,\tt;\dH^{2s})$. 
Let $N$ be a positive integer, $d\in (0,\pi/4)$ and set $k=\sqrt{\frac{\pi d}{\spow N}}$.
Let $u_h^{\cN,\cM}$ be as in \eqref{e:space_time_solution} and let $u_{h,k}^{\cN,N}$ be as in \eqref{e:fully}.
There exists
a constant $C$ independent of $h$, $\tt$, $k$, $N$, $\cN$, $\cM$ satisfying
$$
	\|u_{h}^{\cN,\cM}(\tt) - u_{h,k}^{\cN,\cM}(\tt)\|_{\dH^{2s}} \le C\cM  e^{-\sqrt{\pi \beta d N}} \|f\|_{L^\infty(0,\tt;\dH^{2s})}.
$$
\end{lemma}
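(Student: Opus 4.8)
The plan is to bound the difference $u_h^{\cN,\cM}(\tt)-u_{h,k}^{\cN,\cM}(\tt)$ term-by-term over the double sum and then sum up. Comparing \eqref{e:space_time_solution} with \eqref{e:fully}, both solutions have the same structure $\sum_{j=1}^{\cM-1}\sum_{l=1}^\cN (\cdot)\,\pi_h f(t-t_{j,l-\frac12})$, where the operator in $u_h^{\cN,\cM}$ is $\dt_j\bE_{j,l}=L_h^{-\spow}(e_{\tpow,1}(-t_{j,l-1}^\tpow L_h^\spow)-e_{\tpow,1}(-t_{j,l}^\tpow L_h^\spow))$ by \eqref{e:quad_int}/\eqref{e:rel_num_nh}, i.e. precisely $H_h(t_{j,l-1},\dt_j)$, while in $u_{h,k}^{\cN,N}$ it is the sinc approximation $Q_{h,k}^N(t_{j,l-1},\dt_j)$. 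So the error is
$$
u_h^{\cN,\cM}(\tt)-u_{h,k}^{\cN,\cM}(\tt)=\sum_{j=1}^{\cM-1}\sum_{l=1}^\cN (H_h(t_{j,l-1},\dt_j)-Q_{h,k}^N(t_{j,l-1},\dt_j))\,\pi_h f(\tt-t_{j,l-\frac12}).
$$
First I would apply the triangle inequality and Lemma~\ref{t:sincquad} to each summand with $t=t_{j,l-1}$ and $\dt=\dt_j$, using the choice $k=\sqrt{\pi d/(\spow N)}$ so that $e^{-\pi d/k}+e^{-\spow Nk}\le 2e^{-\sqrt{\pi\spow d N}}$, together with the stability of $\pi_h$ \eqref{pih-bound} and the norm equivalence \eqref{ineq:H_h_H}, to get
$$
\|u_h^{\cN,\cM}(\tt)-u_{h,k}^{\cN,\cM}(\tt)\|_{\dH^{2s}}\le C e^{-\sqrt{\pi\spow d N}}\,\|f\|_{L^\infty(0,\tt;\dH^{2s})}\sum_{j=1}^{\cM-1}\sum_{l=1}^\cN t_{j,l-1}^{-1}\dt_j.
$$

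The remaining point — and the main obstacle — is to show that the double sum $\sum_{j=1}^{\cM-1}\sum_{l=1}^\cN t_{j,l-1}^{-1}\dt_j$ is bounded by $C\cM$ (uniformly in $\cN$). The difficulty is the factor $t_{j,l-1}^{-1}$ blowing up near $t=0$; one must verify that the geometric grading compensates it exactly. On the block $I_j=[t_j,t_{j+1}]$ (recall $t_j=2^{-(\cM-j)}\tt$ and $\dt_j=|I_j|/\cN=2^{-(\cM-j)}\tt/\cN$), one has $t_{j,l-1}=t_j+(l-1)\dt_j\ge t_j$ for $l\ge 1$; more precisely $t_{j,l-1}\ge \max(t_j,(l-1)\dt_j)$. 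The $l=1$ term must be treated separately since $t_{j,0}=t_j$ gives $t_{j,0}^{-1}\dt_j = 2^{-(\cM-j)}\tt/(\cN\cdot 2^{-(\cM-j)}\tt)=1/\cN$; summing over $l=1$ and $j=1,\dots,\cM-1$ contributes $(\cM-1)/\cN\le \cM$. For $l\ge 2$, using $t_{j,l-1}\ge (l-1)\dt_j$ gives $t_{j,l-1}^{-1}\dt_j\le 1/(l-1)$, so $\sum_{l=2}^\cN t_{j,l-1}^{-1}\dt_j\le \sum_{l=2}^\cN \frac{1}{l-1}\le 1+\ln\cN$ — which is $O(\cM)$ by \eqref{cM}. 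Summing this over the $\cM-1$ blocks gives $O(\cM^2)$, which is too crude; the sharper estimate uses $t_{j,l-1}\ge t_j$ together with $\sum_l \dt_j = |I_j|$, so $\sum_{l=1}^\cN t_{j,l-1}^{-1}\dt_j\le |I_j|/t_j + (\text{correction}) = 1 + O(\text{something})$. Actually the cleanest route: $t_{j,l-1}^{-1}\dt_j\le \int_{t_{j,l-1}}^{t_{j,l}} t_{j,l-1}^{-1}\,dr$, and since $t\mapsto t^{-1}$ is decreasing with $t_{j,l-1}\le r$ failing... one instead compares forward: $\dt_j\,t_{j,l-1}^{-1}$. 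Here I would write $\dt_j t_{j,l-1}^{-1}\le 2\,\dt_j t_{j,l}^{-1}\le 2\int_{t_{j,l-1}}^{t_{j,l}} t^{-1}\,dt$ for $l\ge 2$ (using $t_{j,l}\le t_{j,l-1}+\dt_j\le 2t_{j,l-1}$ when $t_{j,l-1}\ge \dt_j$, i.e. $l\ge 2$), whence $\sum_{l=2}^\cN \dt_j t_{j,l-1}^{-1}\le 2\int_{t_j}^{t_{j+1}} t^{-1}\,dt=2\ln 2$, a constant. Adding the $l=1$ contribution $1/\cN\le 1$ per block, $\sum_{l=1}^\cN t_{j,l-1}^{-1}\dt_j\le C$ uniformly in $j$ and $\cN$, and summing over $j=1,\dots,\cM-1$ gives $\le C\cM$. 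This yields the claimed bound $\|u_h^{\cN,\cM}(\tt)-u_{h,k}^{\cN,\cM}(\tt)\|_{\dH^{2s}}\le C\cM e^{-\sqrt{\pi\spow d N}}\|f\|_{L^\infty(0,\tt;\dH^{2s})}$, and I would double-check that the constant $C$ from Lemma~\ref{t:sincquad} is indeed independent of $\tt$, $k$, $N$, $\cN$, $\cM$, $h$ as claimed there.

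One detail to be careful about: Lemma~\ref{t:sincquad} is stated with the $\dH^{2s}$ norm on the output and $\dH_h^{2s}$ on the input; since all the quantities $Q_{h,k}^N$, $H_h$, $\pi_h f$ live in $\vh$, I would invoke \eqref{ineq:H_h_H} freely to pass between $\dH^{2s}$ and $\dH_h^{2s}$ on $\vh$, at the cost of an $h$-independent constant, and use \eqref{pih-bound} to absorb $\|\pi_h f(\tt-t_{j,l-\frac12})\|_{\dH_h^{2s}}\le C\|f\|_{L^\infty(0,\tt;\dH^{2s})}$. Finally, I note that the theorem's conclusion uses $\sqrt{\pi\beta d N}$ while Lemma~\ref{t:sincquad} balancing gives $\sqrt{\pi\spow d N}$ — these agree since $\spow=\beta$ in the paper's notation, so no discrepancy.
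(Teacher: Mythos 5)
Your proposal is correct and follows essentially the same route as the paper: decompose the difference as the double sum of $(H_h(t_{j,l-1},\tau_j)-Q_{h,k}^N(t_{j,l-1},\tau_j))\pi_h f(\tt-t_{j,l-\frac12})$, apply Lemma~\ref{t:sincquad} termwise with the balanced $k$, and bound $\sum_{j,l}\tau_j t_{j,l-1}^{-1}$ by $C\cM$. Your summation argument is more elaborate than needed — since $t_{j,l-1}\ge t_j$ and $\tau_j=t_j/\cN$, each block sum is simply $\le \cN\,\tau_j t_j^{-1}=1$, which is exactly the paper's shortcut via $C^{-1}t_j\le t_{j,l}\le Ct_j$ — but it reaches the same bound.
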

\begin{proof}
Note that both $u^{\cN,\cM}_h$ and $u^{\cN,\cM}_{h,k}$ are approximations starting at $t_1$ (the first interval $I_0=[0,t_1]$ is skipped). 
Hence, applying Lemma~\ref{t:sincquad} on each interval $I_{j,l}$ (i.e. with $\tau=\tau_j$, $t=t_{j,l}$ and $g_h = \pi_h f(\tt-t_{j,l-\frac12})$) for $j=1,...,\cM-1$ and $l=0,...,\cN$,  yields
$$
\bal
& \left\|\sum_{j=1}^{\cM-1} \sum_{l=1}^{\cN} (H_h({t}_{j,l-1},\dt_j)- Q_{h,k}^N({t}_{j,l-1},\dt_j))g_h({t}_{j,l-\frac12}) \right\|_{\dH^{2s}}\\
&\qquad \le C \cN \left|\sum_{j=1}^{\cM-1} \tau_j  t_{j}^{-1}\right|e^{-\sqrt{\pi \beta d N}}\|f\|_{L^\infty(0,\tt;\dH^{2s})}\\
&\qquad \le C\cM e^{-\sqrt{\pi \beta d N}}\|f\|_{L^\infty(0,\tt;\dH^{2s})},
\eal
$$
where we have used the definition \eqref{e:tjl} of $t_{j,l}$ to guarantee that $C^{-1} t_j \leq t_{j,l}\leq C t_j$ as well as the definition of $\tau_j = 2^{-(\cM-j)}\tt/\cN$. 
This is the desired result.
\end{proof}
%\begin{remark}
%We balance the two exponential above by letting $\pi d/k=\spow Nk$. Thus, for a fixed positive integer $N$, 
%we get $k=\sqrt{\frac{\pi d}{\spow N}}$ and the above error estimate becomes
%$$
%	\|U_h(t)-\mathcal U_h(t)\|\le C\max(1,t^{\tpow})e^{-\sqrt{\pi d\spow N}}\|f\|_{L^\infty(0,t;L^2)}.
%$$
%
%\end{remark}

\subsection{Total Error}
We summarize this section by the following total error estimate for the fully discrete approximation \eqref{e:fully} to the solution of the non-homogeneous problem.
Since $k=k(N)$ and $\cM=\cM(\cN)$, we denote by $u_{h}^{\cN,N}(\tt)$ the fully discrete solution \eqref{e:fully}.

\begin{theorem}[Total Error]\label{t:nfull}
Assume that Assumption~\ref{regularity} holds for  $\alpha\in (0,1]$.
Furthermore, let $\gamma \in (0,1)$, $\delta\geq 0$, $s \in [0,\min(1/2,\delta)]$  and let $\alpha^*$ be as in \eqref{e:astar}.
Let $\tt>0$,  $\cN$ a positive  integer and  $\cM=\left\lceil\frac{2\log_2{\cN}}{\tpow}\right\rceil$.
Let $N$ be a positive integer, $d\in (0,\pi/4)$ and set $k=\sqrt{\frac{\pi d}{\spow N}}$.
There exists a constant C independent of $h$, $\cN$, $\tt$ and $N$ such that for every $f\in H^2(0,T;\HH^{2\delta})$ we have
$$
\bal
\|u(\tt)- u_h^{\cN,N}(\tt)\|_{\HH^{2s}}&\le \tilde D(\tt)h^{2\alpha_*}\|f\|_{L^\infty(0,\tt;\HH^{2\delta})}+C\max(\tt^\gamma,\tt^{\frac32+\gamma}) \cN^{-2}  \|f\|_{H^2(0,\tt;\HH^{2s})}\\
&\qquad  +C\log_2(\cN) e^{-\sqrt{\pi d\spow N}}\|f\|_{L^\infty(0,\tt;\HH^{2s})} ,
\eal
$$
where $\tilde D(T)$ is given by \eqref{e:tildeD}.
\end{theorem}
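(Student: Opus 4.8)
The plan is to derive the three-term bound from a single triangle inequality that separates the total error into exactly the three contributions already controlled in this section. Writing
\[
u(\tt) - u_h^{\cN,N}(\tt) = \big(u(\tt)-u_h(\tt)\big) + \big(u_h(\tt)-u_h^{\cN,\cM}(\tt)\big) + \big(u_h^{\cN,\cM}(\tt)-u_{h,k}^{\cN,N}(\tt)\big),
\]
where $u_h$ is the semi-discrete-in-space solution \eqref{e:discrete_nh}, $u_h^{\cN,\cM}$ the pseudo-midpoint time discretization \eqref{e:space_time_solution}, and $u_{h,k}^{\cN,N}$ the fully discrete scheme \eqref{e:fully}. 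Measuring each piece in $\dH^{2s}$ and applying, respectively, Lemma~\ref{l:semirhs}, Theorem~\ref{t:geo}, and Lemma~\ref{t:sincquad2} produces the three summands in the claimed estimate.

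In detail: the space term is bounded by $\tilde D(\tt)\,h^{2\alal}\|f\|_{L^\infty(0,\tt;\dH^{2\delta})}$ via Lemma~\ref{l:semirhs}, with $\tilde D$ as in \eqref{e:tildeD}; the hypotheses needed there ($s\in[0,1/2]$ and $\delta$ compatible with \eqref{delta}) are part of the present assumptions. The time term is bounded by $C\max(\tt^\gamma,\tt^{3/2+\gamma})\cN^{-2}\|f\|_{H^2(0,\tt;\dH^{2s})}$ by Theorem~\ref{t:geo} with the prescribed choice $\cM=\lceil 2\log_2\cN/\gamma\rceil$. The quadrature term is bounded by Lemma~\ref{t:sincquad2} with $k=\sqrt{\pi d/(\spow N)}$, chosen so the two exponentials of Lemma~\ref{t:sincquad} balance and leave $e^{-\sqrt{\pi\spow d N}}$; since $\cM\le 1+2\log_2\cN/\gamma\le C\log_2\cN$, this gives $C\log_2(\cN)\,e^{-\sqrt{\pi d\spow N}}\|f\|_{L^\infty(0,\tt;\dH^{2s})}$. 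Finally, because $s$ and $\delta$ stay in the index range where Proposition~\ref{p:equiv} applies (under Assumption~\ref{regularity}), the norms $\|\cdot\|_{\dH^{2s}}$, $\|\cdot\|_{\dH^{2\delta}}$ are equivalent to $\|\cdot\|_{\HH^{2s}}$, $\|\cdot\|_{\HH^{2\delta}}$, and likewise $H^2(0,\tt;\dH^{2s})$ and $H^2(0,\tt;\HH^{2s})$ coincide with equivalent norms; replacing $\dH$ by $\HH$ everywhere yields the stated inequality.

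The proof is essentially bookkeeping, and the only points requiring care are: (i) checking that the regularity hypotheses of the three cited results are jointly satisfied — $f\in H^2(0,\tt;\HH^{2\delta})$ with $\HH^{2\delta}\subset\HH^{2s}$ (valid since $s\le\delta$) supplies both the $L^\infty(0,\tt;\HH^{2\delta})$ bound used for the space error and the $H^2(0,\tt;\HH^{2s})$ bound used for the time and quadrature errors; (ii) tracking that every constant is genuinely independent of $\cN$, $\cM$, $\tt$ and $N$, which is exactly where the choice of $\cM$ and the $\tt$-uniform continuity of the embedding $H^1(0,\tt)\subset L^\infty(0,\tt)$ enter, precisely as in Theorem~\ref{t:geo}; and (iii) remaining inside the index window where Proposition~\ref{p:equiv} legitimizes the $\dH\!\leftrightarrow\!\HH$ exchange, which is why the statement restricts $s\in[0,\min(1/2,\delta)]$. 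No genuinely new estimate is needed; I expect the mild mismatch of functional-space norms between the three lemmas to be the main (purely technical) hurdle.
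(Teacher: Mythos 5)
Your proposal is correct and follows exactly the route of the paper: the paper's proof is a one-line remark that the result "is in essence" Lemmas~\ref{l:semirhs}, \ref{t:geo} and \ref{t:sincquad2} combined with the $\dH^{2s}$--$\HH^{2s}$ norm equivalence of Proposition~\ref{p:equiv}, which is precisely your triangle-inequality decomposition. Your version simply spells out the bookkeeping (hypothesis compatibility, the embedding $\HH^{2\delta}\subset\HH^{2s}$, and the bound $\cM\le C\log_2\cN$) that the paper leaves implicit.
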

\begin{proof}
This is in essence Lemmas~\ref{l:semirhs},~\ref{t:geo} and~\ref{t:sincquad2} together with the equivalence property between the  dotted spaces and interpolation spaces \eqref{e:interpolation_spaces} (see Proposition~\ref{p:equiv}).

\end{proof}

\begin{remark}[Choice of $\cN$ and $N$]
In practice, we  balance the three error terms in Theorem~\ref{t:nfull} by setting
$$
	N = c_1 (2\alal\ln(1/h))^2\quad\text{and}\quad \cN = c_2\lceil h^{-\alal}\rceil,
$$
for some positive constants $c_1$ and $c_2$ so that the total error behaves like $h^{2\alal}$.
We note that the number of the finite element systems that need to be
solved for the non-homogeneous problem is the same as 
for the homogeneous problem, i.e. $O(\ln(1/h)^2)$ complex systems (see the numerical illustration below).
\end{remark}

%%%%%%%%%%%%%%%%%%%%%%%%%%%%%%%%%%%%%%%%%%%%%%%%%%

\subsection{Numerical illustration}

To minimize the number of system solves in the 
 computation of \eqref{e:fully}, we rewrite
\begin{align*}
u_{h,k}^{\cN,N}(\tt) &= \sum_{j=1}^{\cM-1} \sum_{l=1}^{\cN} \frac{k}{2\pi i}\sum_{n=-N}^N 
\left( e_{\tpow,1}(-{t}_{j,l-1}^\tpow z(y_n)^\spow)-e_{\tpow,1}(-{t}_{j,l}^\tpow z(y_n)^\spow) \right)\\
&\qquad \qquad \qquad \qquad \qquad  z'(y_n) (z(y_n)I-L_h)^{-1} \pih f(T-{t}_{j,l-1/2})\\
&=\frac{k}{2\pi i}\sum_{n=-N}^N z(y_n)^{-\beta}z'(y_n)(z(y_n)I-L_h)^{-1} \cH_{n} ,
\end{align*}
where 
\beq
\cH_{n}:=\sum_{j=1}^{\cM-1} \sum_{j=1}^\cN
\left(e_{\gamma,1}(-{t}_{l,j-1}^\tpow
  z(y_n)^\spow)-e_{\tpow,1}(-{t}_{j,l}^\tpow z(y_n)^\spow) \right)\pih
f(T-{t}_{j,l- 1/2}).
\label{chsum}
\eeq
To implement the above we proceed as follows:
\begin{enumerate}[1)]
\item Compute the inner product vectors, i.e., the integral of $
  f(t-{t}_{j,l-1/2})$ against the finite element basis vectors, for all $(j,l)$.
\item For each, $n$:
\begin{enumerate}[a)] 
\item compute the sums in \eqref{chsum} but replacing $\pi_hf(T-t_{j,l-1/2})
  $ by the corresponding inner product vector, and 
\item compute $z(y_n)^{-\beta}z'(y_n)(z(y_n)I-L_h)^{-1} \cH_{n}$ by
  inversion of the corresponding stiffness matrix applied to the vector
  of Part~a).
\end{enumerate}
\item Sum up all contribution and multiply the result by $\frac{k}{2\pi i}$.
\end{enumerate}

We illustrate the error  behavior in time on a two dimensional problem
with
 domain  $\Omega=(0,1)^2$ and $L=-\Delta$ with homogeneous Dirichlet
 boundary conditions.
We set $\spow=0.5$ and consider the exact soltuion $u(t,x_1,x_2)=t^3\sin{(\pi x_1)}\sin{(\pi x_2)}$ which vanishes at $t=0$. 
This corresponds to
$$
f(x_1,x_2,t) = \left( \frac{\Gamma(4)}{\Gamma(4-\gamma)} t^{3-\gamma}  + t^3 (2\pi^2)^\beta \right)\sin{(\pi x_1)}\sin{(\pi x_2)}.
$$
We partition $\Omega$ using uniform triangles with the mesh size $h=2^{-5}\sqrt{2}$ and use $N= 400$ for the sinc quadrature parameter.
We also set $b=1$ in the hyperbolic contour \eqref{hc}.
In Figure~\ref{f:2Dnh} (left), we report $\|u(0.5)- Q_{h}^{\cN,N}(0.5)\|$ for $\cN = 2,4,8,16,32$
%$\cM=2,4,8,16,32,64$
 and different values of $\tpow$. In each cases, as predicted by Theorem~\ref{t:geo}, the rate of convergence $\cN^{-2}$ is observed.
For comparison, the approximation based on a uniform partition is also provided. In this case, the error decay  behaves like $\tau^{1+\gamma}$ (see Remark~\ref{r:uniform_step}).
\begin{figure}[hbt!]
 \begin{center}
    \begin{tabular}{cc}
\includegraphics[scale=.47]{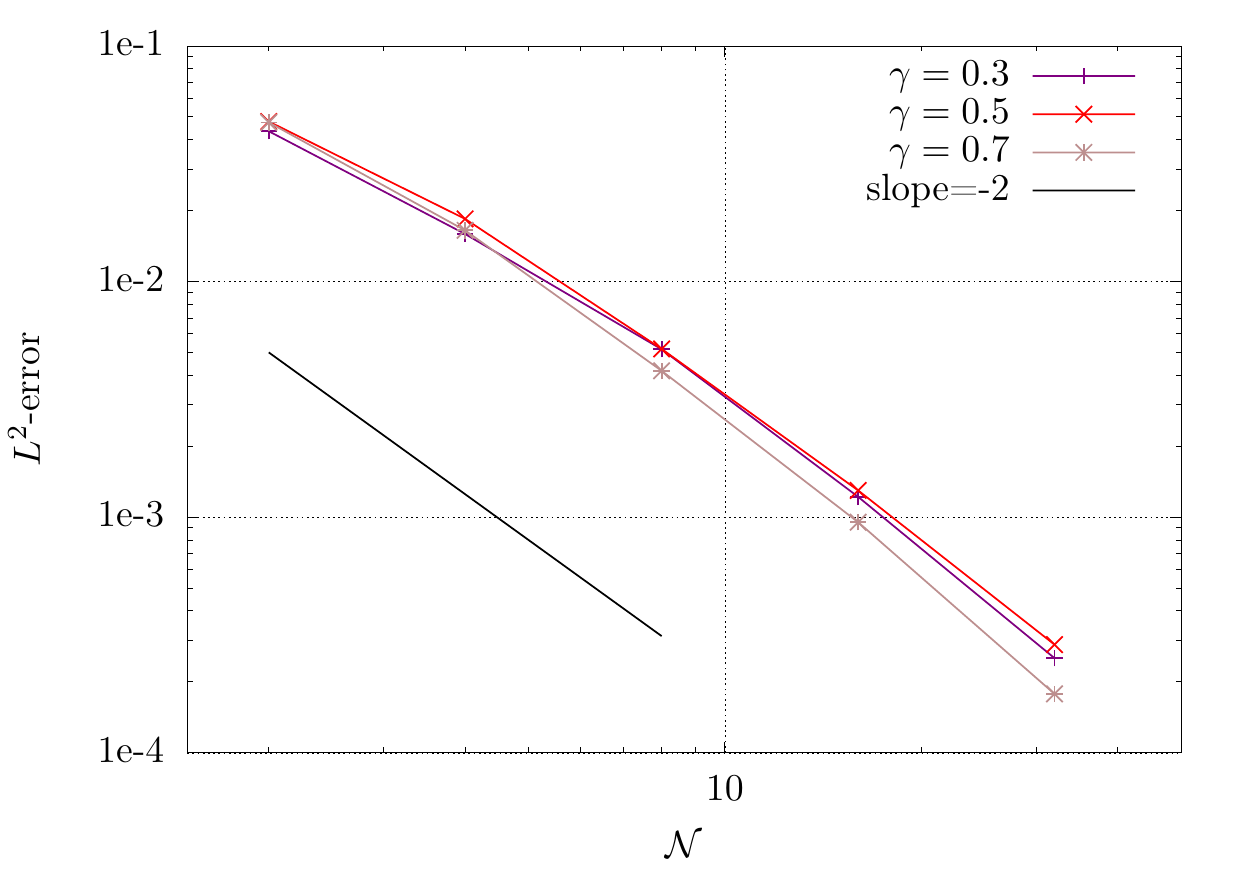} & \includegraphics[scale=.47]{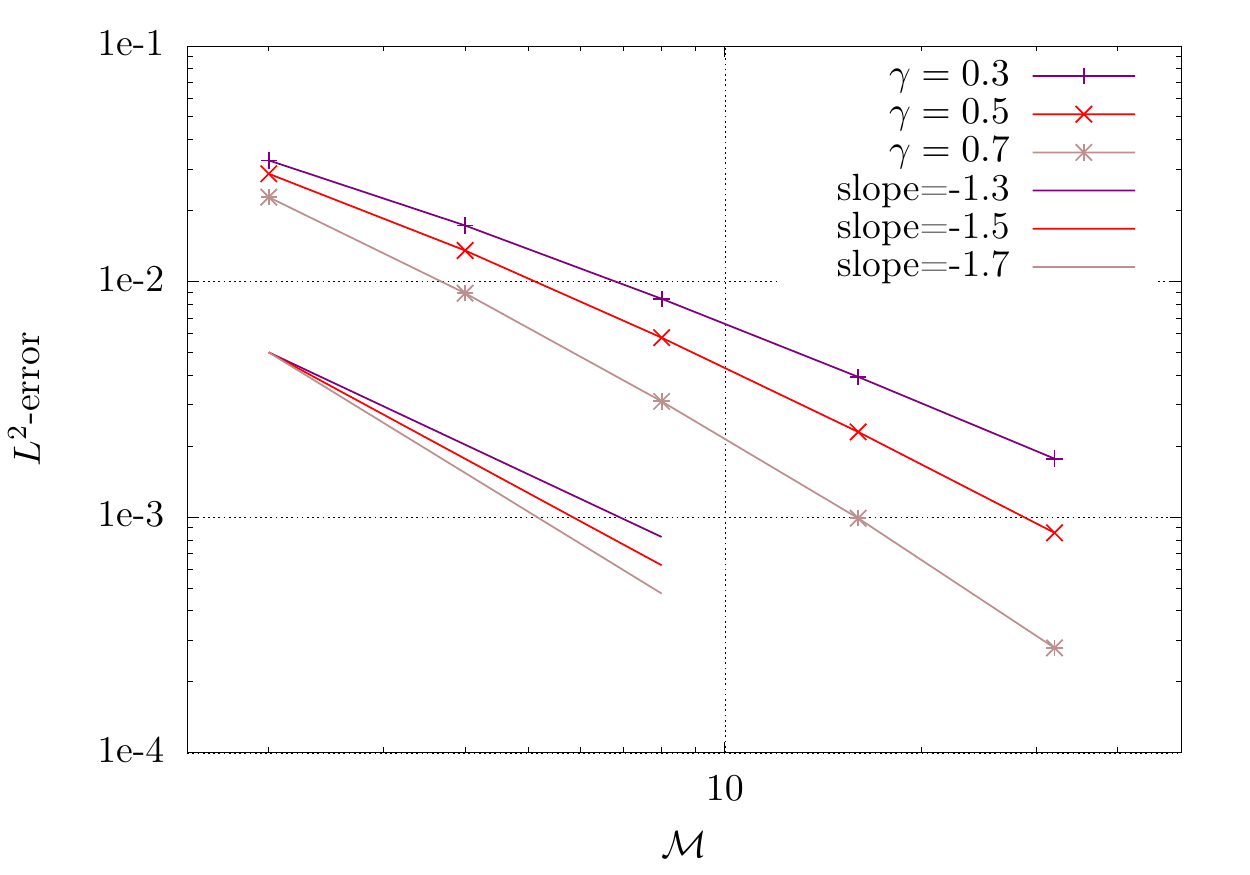} \\
    \end{tabular}
 \end{center}
    \caption{The left graph depicts for different values of $\gamma$, the $L^2$ error between $u(0.5)$ and the fully discrete approximation $u_{h}^{\cN,N}(0.5)$ as a function of $\cN$. The optimal rate of convergence $\cN^{-2}$ predicted by Theorem~\ref{t:geo} is observed.
    In contrast, when using uniform time stepping (right), the observed rate is $\tau^{1+\gamma}$ as announced in Remak~\ref{r:uniform_step}.
}
    \label{f:2Dnh}
\end{figure}

%%%%%%%%%%%%%%%%%%%%%%%%%%%%%%%%%%%%%%%%%%%%%%%%%%
\appendix
\section{Proof of Lemma~\ref{l:residue}}\label{a:lemma1}
The following lemma proved in \cite{BLP17} (see, Lemma 3.1 of \cite{BLP17})
and is instrumental in the proof of
Lemma~\ref{l:residue}.

\begin{lemma} There is a positive constant $C$ only depending on 
  $s\in [0,1]$
such that 
\beq|z|^{-s} \|T^{1-s} (z^{-1}I-T)^{-1}f\|\le C\|f\|,\Forall z\in
\cC,f\in L^2.
\label{lemeq}
\eeq
The same inequality holds $\vh$, i.e. with $T$ replaced by $T_h$ and $f \in \vh$.
\label{tz-bound}
\end{lemma}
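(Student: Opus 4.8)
The plan is to reduce the operator bound to an elementary scalar estimate by spectral calculus, exploiting that the contour $\cC$ was built precisely to encircle the spectrum $\{\lambda_j\}\subset[\lambda_1,\infty)$ of $L$ while keeping a fixed distance from it (recall $r_0<\lambda_1$). First I would expand $f$ in the $L^2$-orthonormal eigenbasis $\{\psi_j\}$ of the compact, self-adjoint, positive operator $T$, whose eigenvalues are $\mu_j=\lambda_j^{-1}\in(0,\lambda_1^{-1}]$. For $z\in\cC$ one has $z^{-1}\notin\{\mu_j\}$ (on $\cC_2$, $|z|=r_0<\lambda_1\le\lambda_j$; on $\cC_1\cup\cC_3$, $z\notin\RR$), so
$$
T^{1-s}(z^{-1}I-T)^{-1}f=\sum_{j=1}^\infty\frac{\mu_j^{1-s}}{z^{-1}-\mu_j}\,(f,\psi_j)\,\psi_j ,
$$
and by Parseval's identity the claimed bound will follow from the scalar estimate
$$
|z|^{-s}\,\frac{\mu^{1-s}}{|z^{-1}-\mu|}=\frac{|\mu z|^{1-s}}{|1-\mu z|}\le C\qquad\text{for all }z\in\cC,\ \mu\in(0,\lambda_1^{-1}] ,
$$
where I used $z^{-1}-\mu=z^{-1}(1-\mu z)$.

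Next I would set $w:=\mu z$ and argue piece by piece along $\cC$. On the rays $\cC_1\cup\cC_3$ one has $\arg w=\pm\pi/4$, so $|1-w|^2=|w|^2-\sqrt2\,|w|+1$; from $2-2\sqrt2\,|w|+|w|^2=(|w|-\sqrt2)^2\ge0$ one gets $|1-w|\ge|w|/\sqrt2$, and since this quadratic in $|w|$ has negative discriminant one also gets $|1-w|\ge1/\sqrt2$. Using the first lower bound when $|w|\ge1$ and the second when $|w|\le1$ yields $|w|^{1-s}/|1-w|\le\sqrt2$ for every $s\in[0,1]$. On the arc $\cC_2$ one has $|z|=r_0$, hence $|w|=\mu r_0\le r_0/\lambda_1<1$, so $|1-w|\ge1-r_0/\lambda_1$ and $|w|^{1-s}\le1$, which gives the bound with constant $1/(1-r_0/\lambda_1)$.

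The discrete statement follows in the same way once one observes that $T_h|_{\vh}$ is self-adjoint and positive on $\vh$ with eigenvalues $\mu_{j,h}=\lambda_{j,h}^{-1}$, and that $\lambda_{j,h}\ge\lambda_{1,h}\ge\lambda_1$ by the min--max principle (since $\vh\subset\HO$); thus $\mu_{j,h}\in(0,\lambda_1^{-1}]$ and the scalar estimates above apply verbatim, with an $h$-independent constant. I do not expect a genuine obstacle here — the substance is recognizing the geometry of $\cC$ relative to $\sigma(L)$ together with the substitution $w=\mu z$; the only point needing a little care is that the two-regime estimate on the rays stays valid at the endpoints $s=0$ and $s=1$, where $|w|^{1-s}/|1-w|$ tends to a nonzero constant rather than $0$ as $|w|\to\infty$ or $|w|\to0$, but boundedness still holds.
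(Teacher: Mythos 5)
Your proof is correct and complete: the reduction by spectral calculus to the scalar bound $|\mu z|^{1-s}/|1-\mu z|\le C$, the two-regime estimate $|1-w|\ge\max(|w|,1)/\sqrt{2}$ on the rays $\arg w=\pm\pi/4$, the separate treatment of the arc using $\mu r_0\le r_0/\lambda_1<1$, and the min--max argument giving $\lambda_{j,h}\ge\lambda_1$ for the discrete case are all sound, including the endpoint cases $s=0,1$. The paper itself does not prove this lemma but cites Lemma 3.1 of \cite{BLP17}, whose argument is the same spectral reduction to a scalar estimate along the contour, so your proposal matches the intended proof (the only cosmetic difference being that your constant visibly depends on $r_0/\lambda_1$ rather than on $s$ alone, which is harmless since $r_0$ and $\lambda_1$ are fixed).
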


\begin{proof}[Proof of Lemma~\ref{l:residue}]
%Rewriting the left hand side of \eqref{rz-bound} as
Noting that $$R_z(L)=(z I-L)^{-1}=T(zT-I)^{-1}$$ and 
 $$R_z(L_h)\pih=(z I-L_h)^{-1}\pih=(zT_h-I)^{-1}T_h\pih=(zT_h-I)^{-1}T_h ,$$ we obtain 
\begin{align*}
\pi_h R_z(L)-R_z(L_h)\pi_h %&=\pi_h(R_{z}(L)-R_{z}(L_h)\pi_h)\\
%&=\pi_h((z-L)^{-1}-(z-L_h)^{-1}\pih)\\
&=\pi_h(T(z T-I)^{-1}-(z T_h-I)^{-1}T_h)\\
%&=\pi_h(zT_h-I)^{-1}((zT_h-I)T-T_h(zT-I))(zT-I)^{-1}\\
&=\pi_h(zT_h-I)^{-1}(T_h-T)(zT-I)^{-1}\\
&=-z^{-2} ( T_h -z^{-1})^{-1}\pi_h(T-T_h)(T-z^{-1})^{-1},
\end{align*}
where for the last step we used the definition of $T_h$ to deduce that $\pi_h(zT_h-I)^{-1}=(zT_h-I)^{-1}\pi_h$.
We have left to prove:
\beq 
\|W(z)\|_{\dH^{2\delta}\rightarrow\dH^{2s}} \le C h^{2\tilde \alpha},
\label{w-bound}
\eeq
for a  constant $C$ is independent of $h$ and $z$ and where 
$$
W(z):= |z|^{-1-\tilde \alpha-s+\delta} (z T_h -I)^{-1}\pi_h(T-T_h)(z T-I)^{-1}.
$$

To show this, we write
\beq\bal
&\|W(z)\|_{\dH^{2\delta}\rightarrow\dH^{2s}}\\
\qquad & \le\underbrace{|z|^{-(1+\tpow)/2-s}\|( T_h
-z^{-1})^{-1}\pi_h\|_{\dot{H}^{1-\tpow}\rightarrow
  \dH^{2s}}}_{+;\mathrm{I}}\underbrace{\|(T-T_h)\|_{\dot{H}^{\alpha-1}\rightarrow\dot{H}^{1-\tpow}}}_{=:\mathrm{II}}\\
\qquad & \qquad \underbrace{|z|^{-(1+\alpha)/2+\delta}\|(
T-z^{-1})^{-1}\|_{\dot{H}^{2\delta}\rightarrow\dot{H}^{\alpha-1}}}_{=:\mathrm{III}},
\eal
\label{threet}
\eeq
where $\gamma :=2\alpha^*-\alpha$.
We estimate the three terms on the right hand side above separately.

We start with $\mathrm{III}$ and use the definition of the dotted spaces (see Section~\ref{ss:dotted}) to write
$$\bal
\|(T-z^{-1})^{-1}\|_{\dot{H}^{2\delta}\rightarrow\dot{H}^{\alpha-1}}
&=\sup_{w\in \dot{H}^{2\delta}} \frac {\|T^{(1-\alpha)/2} (T-z^{-1})^{-1} w\|}
{\|L^\delta w\|}\\
& = \sup_{\theta\in L^2} \frac {\|T^{(1-\alpha)/2} (T-z^{-1})^{-1}
  T^\delta \theta\|}
{\|\theta\|}\\ & =\|T^{1-[(1+\alpha)/2-\delta]} (T-z^{-1})^{-1}\|.
\eal
$$
Applying Lemma~\ref{tz-bound}  (recall that $\delta \in [0,(1+\alpha)/2]$ and $\alpha \in [0,1]$ so that $(1+\alpha)/2-\delta \in [0,1]$), 
we obtain
\beq
 \mathrm{III} = |z|^{-(1+\alpha)/2+\delta}\|(
T-z^{-1})^{-1}\|_{\dot{H}^{2\delta}\rightarrow\dot{H}^{\alpha-1}}\le 
C,
\label{thirdt}
\eeq
where $C$ is the constant in \eqref{lemeq}.

To estimate $\mathrm{I}$, we start with  the equivalence of norms \eqref{ineq:H_h_H} so that
$$
 \|( T_h-z^{-1})^{-1}\pi_h \|_{\dot{H}^{1-\tpow}\rightarrow \dH^{2s}}
%&\le C \|( T_h-z^{-1})^{-1}\pi_h \|_{\dot{H}^{1-\tpow}\rightarrow \dH^{2s}_h}\\
\le C\|( T_h-z^{-1})^{-1}\|_{\dot{H}_h^{1-\tpow}\rightarrow \dH^{2s}_h}
\|\pi_h\|_{\dot{H}^{1-\tpow}\rightarrow \dot{H}_h^{1-\tpow}}.
$$
Whence, the stability of the $L^2$ projection \eqref{pih-bound} together with the equivalence property between dotted spaces and interpolation spaces (Proposition~\ref{p:equiv})  as well as the definition of the discrete dotted space norm \eqref{e:dotted_discrete_norm} lead to
\begin{equation}
 \|( T_h-z^{-1})^{-1}\pi_h \|_{\dot{H}^{1-\tpow}\rightarrow \dH^{2s}}
%&\le C \|( T_h-z^{-1})^{-1}\pi_h \|_{\dot{H}^{1-\tpow}\rightarrow \dH^{2s}_h}\\
%&\leq C \|( T_h-z^{-1})^{-1}\|_{\dot{H}_h^{1-\tpow}\rightarrow \dH^{2s}_h}\\
\leq C \|T_h^{1-[(1+\tpow)/2+s]}( T_h-z^{-1})^{-1}\|.
\label{firstt}
\end{equation}
We recall that $\alpha \in (0,1]$ and $\tpow=2\tilde \alpha-\alpha$ so that $(1+\tpow)/2+s\in (0,1]$.
Hence, Lemma~\ref{tz-bound} ensures the following estimate:
\beq
\mathrm{I} = |z|^{-(1+\tpow)/2-s}\|
(T_h-z^{-1})^{-1}\pi_h \|_{\dot{H}^{1-\tpow}\rightarrow \dH^{2s}}
\le C.
\label{firstfinal}
\eeq

For the remaining term,  Proposition~\ref{prop:T_error} with $2s = 1-\tpow$ gives
$$
\text{II} \leq C h^{\alpha + \min(\alpha,\gamma)} = C h^{\min(2\alpha,2\tilde \alpha)} =  C h^{2\tilde \alpha}.
$$
Combining the above estimate with \eqref{thirdt} and \eqref{firstfinal} yields \eqref{w-bound} and completes the proof.
\end{proof}

\section{Sinc quadrature Lemma.}\label{a:lemma2}

The results of the next lemma are contained in the proof of Theorem~4.1
of \cite{BLP17}.

\begin{lemma}\label{l:contour-esti-ii}
Let $0<d<\pi/4$ and $\lambda>\lambda_1$. 
let $z(y)$ be defined by \eqref{hc} and $B_d =\left\{z \in \mathbb C : \ \Im(z)< d\right\}.$
The following assertions hold.
\begin{enumerate}[(a)]
\item There exists a constant $C>0$ only depending on $\lambda_1$, $b$ and $d$ such that
\beq
|z(y)-\lambda|\ge C  \Forall y\in \bar B_{d};
\label{glambdai}
\eeq
\item There exists a constant $C>0$ only depending on $\lambda_1$, $b$ and $d$ such that 
$$
|z'(y)(z(y)-\lambda)^{-1}|\le C \Forall y\in B_d;
$$
\item There is a constant $C>0$ only depending on $b$, $d$ and $\beta$ such that
$$
\Re(z(y)^\spow)\ge C 2^{-\beta} e^{\spow|\Re y|} \Forall y\in B_d.
$$
\end{enumerate}
\end{lemma}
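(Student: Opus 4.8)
The plan is to reduce all three assertions to elementary estimates on the real and imaginary parts of $z(y)$. Writing $y = u + iv$ with $u = \Re y$ and $|v| \le d < \pi/4$, the addition formulas for $\cosh$ and $\sinh$ give
\[
\Re z(y) = b(\cos v - \sin v)\cosh u, \qquad \Im z(y) = b(\cos v + \sin v)\sinh u,
\]
and likewise $z'(y) = b\big((\cos v - \sin v)\sinh u + i(\cos v + \sin v)\cosh u\big)$. Since $\cos v - \sin v = \sqrt2\cos(\pi/4 + v)$ and $\cos v + \sin v = \sqrt2\cos(\pi/4 - v)$, both lie in $[\cos d - \sin d,\ \cos d + \sin d]$ for $v\in[-d,d]$, with $\cos d - \sin d > 0$ and $\cos d + \sin d < \sqrt2$. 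First I would record three consequences for later use: (i) $\Re z(y) \ge b(\cos d - \sin d)\cosh u > 0$, so $z(y)$ lies in the open right half-plane; (ii) $|z(y)| \le b\sqrt2\,\cosh u$ and $|z'(y)| \le b\sqrt2\,\cosh u$; and (iii) $|\arg z(y)| < \pi/4 + d < \pi/2$, because $\arg z(y) = \arctan\!\big(\tfrac{\cos v + \sin v}{\cos v - \sin v}\tanh u\big)$, $|\tanh u| < 1$, and $\tfrac{\cos v + \sin v}{\cos v - \sin v} \le \tan(\pi/4 + d)$.

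For part (a), I would minimize $|z(y)-\lambda|^2 = (b a_v \cosh u - \lambda)^2 + b^2 c_v^2 \sinh^2 u$ over $u \in \RR$, where $a_v := \cos v - \sin v$, $c_v := \cos v + \sin v$, and $a_v^2 + c_v^2 = 2$. With $x = \cosh u \ge 1$ this is an upward parabola in $x$ with vertex at $x^\ast = a_v\lambda/(2b)$, so its minimum over $x \ge 1$ is attained either at $x = 1$, where it equals $(b a_v - \lambda)^2 \ge (\lambda_1 - b(\cos d + \sin d))^2 > 0$ — here the hypothesis $b < \lambda_1/\sqrt2$ enters through $b(\cos d + \sin d) < b\sqrt2 < \lambda_1$ — or at $x^\ast$, where it is at least $(b a_v x^\ast - \lambda)^2 = \lambda^2\big(1 - \tfrac{a_v^2}{2}\big)^2 \ge \tfrac14\lambda_1^2(1 - \sin 2d)^2 > 0$. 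Both lower bounds are uniform in $\lambda \ge \lambda_1$ and $v \in [-d,d]$, which gives \eqref{glambdai}.

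For part (b), I would combine $|z'(y)| \le b\sqrt2\,\cosh u$ with a matching lower bound on $|z(y) - \lambda|$ obtained by cases: if $b a_v \cosh u \ge 2\lambda$ then $|z(y)-\lambda| \ge \tfrac12 b a_v \cosh u$; if $b a_v \cosh u < 2\lambda$ and $\cosh u \ge \sqrt2$ then $|z(y)-\lambda| \ge b c_v|\sinh u| \ge \tfrac{1}{\sqrt2} b c_v \cosh u$; and if $\cosh u < \sqrt2$ then $\cosh u$ is bounded while $|z(y)-\lambda| \ge C$ by part (a). In each regime $|z'(y)(z(y)-\lambda)^{-1}|$ is bounded by a constant depending only on $\lambda_1, b, d$. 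For part (c), I would write $\Re(z(y)^\spow) = |z(y)|^\spow\cos(\spow\arg z(y))$; since $|\arg z(y)| < \pi/4 + d < \pi/2$ and $\spow \in (0,1)$ we get $\cos(\spow\arg z(y)) \ge \cos(\pi/4 + d) > 0$, and since $|z(y)| \ge b(\cos d - \sin d)\cosh u \ge \tfrac12 b(\cos d - \sin d)e^{|u|}$ we obtain
\[
\Re(z(y)^\spow) \ge \cos(\pi/4 + d)\,\big(b(\cos d - \sin d)\big)^\spow\,2^{-\spow}e^{\spow|u|},
\]
which is the claimed bound with $C = \cos(\pi/4 + d)\big(b(\cos d - \sin d)\big)^\spow$ depending only on $b$, $d$, $\spow$.

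The only non-routine step is part (a): the lower bound on $|z(y)-\lambda|$ must be uniform over the unbounded range $\lambda \ge \lambda_1$, and this is exactly where the quantitative restriction $b < \lambda_1/\sqrt2$ on the contour parameter must be invoked (through $b(\cos d+\sin d)<\lambda_1$). Once (a) is in hand, parts (b) and (c) follow directly from the size estimates on $z(y)$ and $z'(y)$ recorded at the outset.
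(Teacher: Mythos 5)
Your proof is correct and follows essentially the same route as the paper's: both rest on the explicit decomposition $\Re z(y)=b(\cos v-\sin v)\cosh u$, $\Im z(y)=b(\cos v+\sin v)\sinh u$, use the real part of $z(y)-\lambda$ when $\cosh u$ is small and the imaginary part when it is large (with $b\sqrt2<\lambda_1$ entering in exactly the same place), and prove (c) identically via the bound $|\arg z(y)|\le \pi/4+d$ and $\Re(z^\spow)=|z|^\spow\cos(\spow\arg z)$. The only cosmetic difference is that for (a) you minimize the quadratic $|z(y)-\lambda|^2$ exactly in $x=\cosh u$, whereas the paper splits at a fixed auxiliary radius $r$ with $\lambda_1-\sqrt2\,b\cosh r>0$; both yield the same uniform lower bound.
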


\begin{proof}[Proof of the Lemma~\ref{l:contour-esti-i}]
From the expression  (4.13) of $\Re(z(y))$ in \cite{BLP17}, we deduce that $\Re(z(y))$ is strictly positive for
$y\in \bar B_{d}=\{w\in \mathbb C :\ \Im(w)\le d\}$.  It follows from
this and 
Part~(a) of Lemma~\ref{l:contour-esti-ii} that Condition~(i) of
Definition~\ref{class_SB} holds for $g_\lambda(\cdot,t)$ for $\lambda
\ge \lambda_1$ and $t>0$.

We now give a proof of (ii) and (iii) of Definition~\ref{class_SB} simultaneously.
Note that Part (b) in Lemma~\ref{l:contour-esti-ii} together with \eqref{ml-bound-scalar} imply that for $y\in \overline B_d$,
$$|g_\lambda(y,t)|\le \frac{C}{1+t^\tpow |z(y)^\spow|} \le\frac{C}{1+t^\tpow |\Re (z(y)^\spow)|} .$$
Furthermore, the estimate on $\Re(z(y)^\beta)$ in Part (c) of
Lemma~\ref{l:contour-esti-ii} 
yields
\begin{equation}\label{e:estim_app}
	|g_\lambda(y,t)|\le \frac{C}{1+t^\tpow \kappa2^{-\beta} e^{\spow|\Re y|}}	\le C(\spow,d,b)t^{-\tpow}e^{-\spow|\Re y|} .
\end{equation}
This guarantees that 
$$
	\int_{-d}^{d} |g_\lambda(u+iw,t)|\, dw\leq C(\spow,d,b) t^{-\tpow}
$$
and
$$
\bal
	N(B_d)&=\int_{-\infty}^\infty \left(|g_\lambda(u+id)|+|g_\lambda (u-id)| \right) du\\
	&\le t^{-\tpow}C(\spow,d,b)\int_0^\infty e^{-\spow y}\, dy
	\le C(\spow,d,b)t^{-\tpow}
\eal
$$
which yield (ii), (iii) and the bound on $N(B_d)$.
\end{proof}

\bibliographystyle{plain}

\end{document}